\numberwithin{equation}{section}
\newcommand*{\centerfloat}{%
	\parindent \z@
	\leftskip \z@ \@plus 1fil \@minus \textwidth
	\rightskip\leftskip
	\parfillskip \z@skip}
\newtheorem{theorem}{Theorem}[section]
\newtheorem{corollary}[theorem]{Corollary}
\newtheorem{lemma}[theorem]{Lemma}
\newtheorem{prop}[theorem]{Proposition}
\newtheorem{conjecture}[theorem]{Conjecture}
\newtheorem{observation}[theorem]{Observation}
\newtheorem{question}[theorem]{Question}
\theoremstyle{definition}
\newtheorem{definition}[theorem]{Definition}
\newtheorem{remark}[theorem]{Remark}
\newtheorem{example}[theorem]{Example}
\newcommand\set[1]{\left\{ #1 \right\}}
\newcommand\abs[1]{\left| #1 \right|}
\newcommand\parens[1]{\left( #1 \right)}
\newcommand\N{\mathbb{N}}
\newcommand\A{\mathcal{A}}
\newcommand\W{\mathcal{W}}
\newcommand{\diam}{\mathord{\diamond}}
\newcommand{\bull}{\mathord{\bullet}}
\definecolor{grey}{gray}{.75}
\definecolor{0}{RGB}{250,0,0}
\definecolor{1}{RGB}{0,10,255}
\definecolor{2}{RGB}{220,190,0}
\definecolor{3}{RGB}{0,170,130}
\definecolor{$\diam$}{RGB}{255,255,255}
\newcommand{\directionarrow}{\draw[ultra thick, grey,->] (0,0) +(75:.5) arc(75:-255:.5);}
\title{The Existence and Structure of Universal Partial Cycles}
\author{Dylan Fillmore, Bennet Goeckner, Rachel Kirsch, \\ Kirin Martin, and Daniel McGinnis}
\begin{document}
	
	\maketitle
	
	\abstract{A universal partial cycle (or upcycle) for $\A^n$ is a cyclic sequence that covers each word of length $n$ over the alphabet $\A$ exactly once---like a De Bruijn cycle, except that we also allow a wildcard symbol $\diam$ that can represent any letter of $\A$.  Chen et al.~in 2017 and Goeckner et al.~in 2018 showed that the existence and structure of upcycles are highly constrained, unlike those of De Bruijn cycles, which exist for every alphabet size and word length. Moreover, it was not known whether any upcycles existed for $n \ge 5$. We present several examples of upcycles over both binary and non-binary alphabets for $n = 8$. We generalize two graph-theoretic representations of De Bruijn cycles to upcycles. We then introduce novel approaches to constructing new upcycles from old ones. Notably, given any upcycle for an alphabet of size $a$, we show how to construct an upcycle for an alphabet of size $ak$ for any $k \in \N$, so each example generates an infinite family of upcycles. We also define folds and lifts of upcycles, which relate upcycles with differing densities of $\diam$ characters. In particular, we show that every upcycle lifts to a De Bruijn cycle. Our constructions rely on a different generalization of De Bruijn cycles known as perfect necklaces, and we introduce several new examples of perfect necklaces. We extend the definitions of certain pseudorandomness properties to partial words and determine which are satisfied by all upcycles, then draw a conclusion about linear feedback shift registers. Finally, we prove new nonexistence results based on the word length $n$, alphabet size, and $\diam$ density.}

	\section{Introduction}

	Let $\A=\{0,\dots,a-1\}$ for some $a\ge2$. Let $n$ be a positive integer, and let $\A^n$ denote the set of words of length $n$ with symbols in $\A$. A \emph{De Bruijn cycle} for $\A^n$ is a cyclic sequence of symbols $(u_1\cdots u_{N})$ such that for every $w\in \A^n$ there is precisely one index $i$ for which $u_iu_{i+1}\cdots u_{i+n-1}=w$. Here indices are taken modulo $N$, the \emph{length} of the De Bruijn cycle, which is always $a^n$. For example, $(00010111)$ is a De Bruijn cycle for $\set{0,1}^3$ because its consecutive substrings of length $3$ — $000$, $001$, $010$, $101$, $011$, $111$, $110$, and $100$ — cover each binary word of length $3$ exactly once. De Bruijn cycles exist for every alphabet size $a$ and every subword length $n$.
	
	In this paper, we are primarily concerned with universal partial cycles (which we call ``upcycles'' for brevity). A \emph{universal partial cycle} (or \emph{upcycle}) covers each word of $\A^n$ exactly once even more compactly than a De Bruijn cycle by using a wildcard symbol $\diam$ which can represent any letter of $\A$. For example, $(001\diam 110 \diam)$ is a universal partial cycle for $\{0,1\}^4$ because its consecutive substrings of length $4$ cover words $0010$ and $0011$, $0101$ and $0111$, $1011$ and $1111$, $0110$ and $1110$, $1100$ and $1101$, $1000$ and $1010$, $0000$ and $0100$, and finally $0001$ and $1001$. Thus, for every $w \in \set{0,1}^4$, there is exactly one index $i$ for which $a_ia_{i+1}\cdots a_{i+n-1}$ covers $w$, where indices are taken modulo $8$.
	
	The study of upcycles stems from the study of \emph{universal partial words} (or \emph{upwords}), first introduced in \cite{CKMS17} in 2017. Upwords are similar to upcycles but are not read cyclically. In \cite{CKMS17}, Chen et al. proved that upwords for $\set{0,1}^n$ exist for every $n \ge 2$. For example, $\diam^{n-1}01^n$ is an upword for $\set{0,1}^n$ because each $w \in \set{0,1}^n$ is covered only by $a_ia_{i+1}\cdots a_{i+n-1}$, where $i$ is one more than the number of consecutive $1$'s at the end of $w$. They also proved structural constraints on and nonexistence results for upcycles (called ``cyclic upwords'' in \cite{CKMS17}) and noted that $(001\diam 110 \diam)$ was the only known binary upcycle. Several years earlier, in \cite{BSSW10}, Blanchet-Sadri et al.~introduced \emph{De Bruijn partial words}, partial words of minimum length that cover every word in $\A^n$. De Bruijn partial words thus exist for every $\A$ and $n$ but may cover a given word of length $n$ more than once.
	
	De Bruijn cycles have been studied extensively and have myriad applications, including genome assembly \cite{CPT11}, neuroscience \cite{neural11}, and robotics \cite{GS16}. De Bruijn sequences with good autocorrelation properties (see Section~\ref{sec:pseudo}) are applied throughout communications and cryptography \cite[Chapter~5]{GG05}. Universal partial words are comparatively quite new, but variants already have been studied in computer science \cite{BF20} and computational biology \cite{OPKFB17}. More recently, upcycles have been used to construct two-dimensional analogues called uptori \cite{CKKP25}.
	
	It was shown in \cite{G18} that every upword over a non-binary alphabet is cyclic, meaning that its first and last $n-1$ characters can be glued together to form a cycle that covers each word of length $n$ exactly once. Thus the study of upcycles encompasses the study of all upwords over non-binary alphabets. In the same paper, an example of an upcycle for $\A^4$ for any even $|\A|$ was constructed, and the authors asked if there exist upcycles for $\A^n$ with $n\geq 5$ (and some $\A$). In Section~\ref{sec:newbinary}, we present the first known upcycles for $n\ge 5$, specifically for $n=8$ and $a=2$. 
	
	In Section~\ref{sec:constructions}, we introduce some new ways to create perfect necklaces. In Section~\ref{sec:alph_mult}, we show that given an upcycle for a word of length $n$ over an alphabet of size $a$, we can construct an upcycle for word length $n$ over an alphabet of size $ak$ for any $k\in \N$. Thus, we deduce the existence of upcycles for $\A^8$ whenever $|\A|$ is even. Then, in Section~\ref{sec:lift}, we show how upcycles with greater diamond density can be ``unfolded'' into upcycles with certain lower diamond densities; in particular we show that any upcycle can be unfolded into a De Bruijn cycle. We also introduce two new representations of upcycles as substructures of De Bruijn graphs. In Section~\ref{sec:pseudo} we show that upcycles satisfy some pseudorandomness properties, further distinguishing them from binary upwords and highlighting their similarities with De Bruijn cycles. At the same time, we show that unfolding an upcycle with diamondicity 1 never produces a De Bruijn cycle with ``good'' autocorrelation. In Section~\ref{sec:nonexist}, we give further nonexistence conditions on upcycles, narrowing the search space for more upcycles. We conclude with open questions in Section~\ref{sec:open}.

	\section{Preliminaries}
	
	An \emph{alphabet} $\A$ is a set of symbols, which we call \emph{letters}. A \emph{character} refers to a letter or a diamond $\diam$ (which is assumed to not be a letter of $\A$). Throughout the paper, the alphabet size $|\A|\ge2$ is denoted by $a$, and we assume $\A=\{0,\dots,a-1\}$ unless otherwise specified. A \emph{word} (or \emph{total word}) over $\A$ is a sequence of letters from $\A$, denoted by $u=u_1\cdots u_N$, and a \emph{partial word} is a sequence of characters from $\A\cup \{\diam\}$. A \emph{cyclic word} over $\A$ is a cyclically ordered sequence of letters from $\A$, denoted by $u=(u_1\cdots u_N)$, and a \emph{cyclic partial word} is defined analogously. The $i$th character of a (cyclic) partial word $x$ is denoted by $x_i$. Indices of cyclic partial words are considered modulo the length of the cyclic partial word (using indices $1$ to $N$ for notational convenience). A \emph{substring} of a (cyclic) partial word $x$ is a partial word $x_i\cdots x_{i+k-1}$ for some $i$ and $k$. We sometimes refer to $x_i \cdots x_{i+k-1}$ as the substring at position $i$. We use the standard notation $uv$ for the concatenation of partial words $u$ and $v$, and $v^q$ for the concatenation of $q$ copies of $v$.
	
	For a (cyclic) partial word $x$ and a partial word $y$, we say that $x$ \emph{covers} $y$ (or $y$ is \emph{covered} by $x$) if $x$ has a substring $x'$ of the same length as $y$ such that either $x'_i =\diam$ or $x'_i = y_i$ for all $i$. A \emph{universal partial cycle} (or \emph{upcycle}) is a cyclic partial word that covers each word of $\A^n$ exactly once. A \emph{window} of an upcycle $u=(u_1\cdots u_N)$ for $\A^n$ is a substring of $u$ of length $n$, and the \emph{$k$-window at position $i$} of a (cyclic) partial word $u$ is $u_iu_{i+1}\cdots u_{i+k-1}$ (note that a window is an $n$-window when $u$ is understood to be an upcycle for $\A^n$). Thus each word of $\A^n$ is covered by exactly one window of an upcycle. We say that an upcycle is \emph{non-trivial} if it contains at least one diamond and at least one non-diamond character. This rules out De Bruijn cycles and the degenerate upcycle $(\diam)$. The following proposition shows that for any nontrivial upcycle, the length of the upcycle exceeds $n$.
	
	\begin{prop}\label{prop:upcycle_length}
		If $u$ is a nontrivial upcycle for $\A^n$, then $|u| > n$.
	\end{prop}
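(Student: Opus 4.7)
The plan is to argue by contradiction: suppose $|u| = N \le n$ and derive a contradiction from nontriviality. The key observation is that if a window of length $n$ is read cyclically out of a string of length $N \le n$, then every one of the $N$ positions of $u$ appears in the window at least once; so any fixed non-diamond character of $u$ is forced to appear in every window.

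Concretely, first I would use nontriviality to pick a position $j$ with $u_j = c \in \A$. Then I would note that for each $i$, the window $u_i u_{i+1} \cdots u_{i+n-1}$, with indices modulo $N$, runs through $n \ge N$ consecutive residues mod $N$ and therefore hits position $j$; hence $c$ appears at some coordinate of every window. Consequently, every word of $\A^n$ covered by $u$ must contain the letter $c$.

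To finish, I would use $|\A| \ge 2$ to pick $c' \in \A \setminus \{c\}$ and observe that the constant word $(c')^n \in \A^n$ contains no $c$, so it cannot be covered by any window of $u$. This contradicts the upcycle property and forces $|u| > n$.

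I do not anticipate any real obstacle. The only step that deserves explicit mention is the cyclic-indexing claim that $n$ consecutive residues modulo $N$ exhaust $\{1,\dots,N\}$ whenever $n \ge N$, which is immediate. Notably, only the ``at least one non-diamond character'' half of nontriviality is used; the ``at least one diamond'' half is what rules out the De Bruijn case (where the conclusion is anyway trivial, since $|u| = a^n > n$), while the degenerate upcycle $(\diam)$ is the unique example showing that the nontriviality hypothesis cannot be dropped.
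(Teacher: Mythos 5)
Your proof is correct and is essentially the paper's own argument: both pick a non-diamond letter $c$ guaranteed by nontriviality, note that every window of a cycle of length at most $n$ must contain that character, and conclude that the constant word in a different letter cannot be covered. You simply make explicit the cyclic-indexing step that the paper leaves implicit.
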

	\begin{proof}
		Suppose $u$ is a nontrivial upcycle of length at most $n$. Recall $\abs{\A} \ge 2$, so we assume $0, 1 \in \A$. The upcycle is nontrivial, so without loss of generality $u$ contains the letter $0$. Therefore no window of $u$ covers $1^n$, contradicting that $u$ is an upcycle.
	\end{proof}
	
	By \cite[Lemma~14]{CKMS17}, for any upcycle $u$ for $\A^n$, if $u_i = \diam$ then $u_{i+n} = \diam$, i.e., the diamond characters of $u$ are $n$-periodic. Thus every window of an upcycle contains the same number of $\diam$ characters, and we call this number the \emph{diamondicity} of the upcycle, denoted by $d$. This implies that the length of any upcycle is $a^{n-d}$, and the number of diamond characters is $(d/n)a^{n-d}$. We sometimes refer to an upcycle for word length $n$ over an alphabet of size $a$ with diamondicity $d$ as an $(a,n,d)$ upcycle. The only known non-trivial upcycles at the time of writing have $d=1$.
	
	Given a (cyclic) partial word $u$, its \emph{frame} is the (cyclic) binary total word over the alphabet $\{\bull,\diam\}$ obtained by replacing every letter (non-diamond character) of $u$ with a $\bull$ symbol. A \emph{window frame} is the frame of a window. For example, given the upcycle $u=(001\diam 110\diam)$ for $a=2$ and $n=4,$ the frame of $u$ is $(\bull \bull \bull \diam \bull \bull \bull \diam)$. The second window of $u$ is $01\diam 1$, and the corresponding window frame is $\bull \bull \diam \bull$.
	
	A \emph{universal partial word} (or \emph{upword}) for $\A^n$ is a partial word $u=u_1\dots u_N$ that covers every word in $\A^n$ exactly once when read linearly (and not cyclically). Both $\diam \diam 0111$ and $0\diam 011100$ are upwords for $\set{0,1}^3$. Diamondicity is not well-defined for upwords in general. An upword $u$ of the form $u=vwv$ for some partial words $v,w$ with $\abs{v}=n-1$ can be made into an upcycle $(vw)$. Conversely, any upcycle can be made into $a^{n-d}$ different upwords by splitting the cycle at any position. For example, the upcycle $(001\diam110\diam)$ produces the upwords $001\diam110\diam001$ and $0\diam001\diam110\diam0$, among others. As mentioned in the introduction, all upwords over non-binary alphabets can be expressed as upcycles and thus have well-defined diamondicity. (We note in passing that Proposition~\ref{prop:upcycle_length} gives a shorter way to prove \cite[Theorem~4.8]{G18} which says that every non-binary upword corresponds to an upcycle.)
	
	We note two straightforward operations that can be performed on an upcycle (or upword) to produce another upcycle. If 
	\(
	u=(u_1\dots u_N)
	\)
	is an upcycle and $\pi$ is a permutation of the letters of the alphabet and $\pi(\diam) = \diam$, then $\pi(u)=(\pi(u_1)\dots \pi(u_N))$ is an upcycle. We say that $\pi(u)$ is obtained from $u$ by permuting the letters of the alphabet. Also, $(u_N\cdots u_1)$ is an upcycle which we call the reversal of $u$, or we say that this upcycle is obtained by reversing $u$.
	
	For a positive integer $n$, we use the notation $[n]$ for the set $\set{1,2, \ldots, n}$.
	
	\subsection{Perfect necklaces, astute graphs, and De Bruijn graphs}\label{sec:astute_graphs_etc}
	
	In Sections~\ref{sec:alph_mult} and \ref{sec:lift} we introduce some constructions to produce new upcycles from a given upcycle. Two of these constructions require use of the following notion (see, e.g., \cite{AB16}).
	
	\begin{definition}
		An \emph{$(a,n,t)$-perfect necklace} $v$ is a cyclic word of length $t\!\cdot\!a^n$ such that, for every $j\in [t]$, each word of $\set{0,\ldots, a-1}^n$ is the $n$-window at position $i$ of $v$ for a unique $i \in [ta^n]$ with $i \equiv j \pmod{t}$.
	\end{definition}
	
	By definition, De Bruijn cycles for $\set{0,1,\dots,a-1}^n$ are exactly $(a, n,1)$-perfect necklaces.
	
	\begin{example}\label{ex:perf_neck}
		Let $\A = \set{0,1}$. Consider the following cyclic words.
		\begin{center}
			\begin{tabular}{ccc|rl}
				$a$ & $n$ & $t$ & & $\hspace{-1cm}(a,n,t)$-perfect necklace $\phantom{\Big|}$\\\hline
				$2$ & $2$ & $2$ & $u =$ & $\hspace{-.32cm}(00 ~ 11 ~ 10 ~ 01)$ \\
				$2$ & $1$ & $4$ & $v =$ & $\hspace{-.32cm}(00 10 ~ 11 01)$ \\
				$2$ & $3$ & $3$ & $w =$ & $\hspace{-.32cm}(000 ~ 001 ~ 010 ~ 011 ~ 100 ~ 101 ~ 110 ~ 111)$ \\
			\end{tabular}
		\end{center}
		Then $u$ is a $(2, 2,2)$-perfect necklace, $v$ is a $(2, 1,4)$-perfect necklace, and $w$ is a $(2, 3,3)$-perfect necklace.
	\end{example}
	
	The existence of perfect necklaces will be important for some results in this paper, and the following graph allows one to deduce the existence of perfect necklaces.
	
	\begin{definition}[Astute graph and De Bruijn graph]\label{def:AstuteBad}
		Let $a,n,t \in \mathbb{N}$ and let $\A = \set{0,\dots,a-1}.$ The \emph{astute graph} $G(a,n,t)$ is the directed graph with vertices $(u,s)$ where $u \in \A^{n}$ and $s \in \set{0,1,\dots,t-1}$.
		
		There is an edge pointing from $(u,s)$ to $(u',s')$ if and only if both
		\begin{enumerate}
			\item $u=xv$ and $u'=vy$ for some $x,y \in\A$ and $v\in \A^{n-1}$, and
			\item $s+1 \equiv s' \pmod t$.
		\end{enumerate}
		It is sometimes useful to label the edges by $y$ and other times useful to label the edges by $xvy$.
		
		The \emph{De Bruijn graph} $B(a,n)$ is isomorphic to the astute graph $G(a,n,1)$. Its vertices are the words $u \in \A^n$, and its edges are labeled by either $y$ or $xvy$, as in the astute graph.
	\end{definition}

	The astute graph $G(a,n,t)$ can also be viewed as the tensor (also called direct or Kronecker) product of the De Bruijn graph $B(a,n)$ with a directed cycle on $t$ vertices. We note the similarity between the astute graph $G(2,n,t)$ and the Praeger--Xu graph $PX(n,t)$ as mentioned in, for example, \cite{JP22}.
	
	Astute graphs are strongly connected with $\deg^-(v) = \deg^+(v) = a$ for every vertex $v$, so they are Eulerian. Furthermore, an Euler tour of $G(a,n-1,t)$ induces an $(a,n,t)$-perfect necklace for $\A$ \cite[Section~3]{AB16}, which immediately gives the following result.
	
	\begin{prop}[Section~3 in \cite{AB16}]\label{prop:PerfNecksExist!}
		There exists an $(a,n,t)$-perfect necklace for any $a,n,t\in \mathbb{N}$. 
	\end{prop}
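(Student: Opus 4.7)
The plan is to realize an $(a,n,t)$-perfect necklace as an Euler tour of the astute graph $G(a,n-1,t)$, building directly on the two structural facts already noted in the excerpt: $G(a,n-1,t)$ is strongly connected, and every vertex has in-degree and out-degree equal to $a$. These properties make the graph Eulerian, and its edge count $(ta^{n-1})\cdot a = ta^n$ matches the required necklace length.

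Fix an Euler tour $e_1,e_2,\ldots,e_{ta^n}$ starting at a vertex $(u_0,0)$, and define a cyclic word $w=(w_1\cdots w_{ta^n})$ by letting $w_i$ be the final letter of the word-component of the target of $e_i$ (equivalently, the $y$-label of $e_i$). Because consecutive edges in a walk of $G(a,n-1,t)$ have source and target whose $(n-1)$-letter word-components overlap in $n-2$ positions, a routine unpacking shows that the word-component of the target of $e_i$ is $w_{i-n+2}\cdots w_i$, and hence the full $\A^n$-label $xvy$ of $e_i$ is exactly the $n$-window $w_{i-n+1}w_{i-n+2}\cdots w_i$.

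It follows that the $n$-window of $w$ at position $j$ equals the label of the edge $e_{j+n-1}$, whose source has second coordinate $\equiv j+n-2 \pmod t$. Fixing a residue class $r \in [t]$, the $n$-windows of $w$ at positions $j \equiv r \pmod t$ correspond bijectively, via the Eulerian traversal, with the edges of $G(a,n-1,t)$ whose source has second coordinate $\equiv r+n-2 \pmod t$; by the construction of the astute graph (Definition~\ref{def:AstuteBad}), the $\A^n$-labels of such edges exhaust $\A^n$ without repetition. This gives the perfect necklace property.

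The main obstacle is the index bookkeeping in the middle step: one must keep careful track of how the edge label $xvy$ of $e_i$ is reconstructed from the $n$ consecutive $y$-labels ending at $w_i$, and of how the second-coordinate residue of a source vertex translates into a position residue in $w$. Once this correspondence is pinned down, the perfect necklace property is an immediate consequence of the fact that an Euler tour traverses each edge exactly once.
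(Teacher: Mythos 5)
Your proposal is correct and follows exactly the paper's route: the paper also obtains the perfect necklace as an Euler tour of the astute graph $G(a,n-1,t)$, citing its strong connectivity and the fact that $\deg^-(v)=\deg^+(v)=a$, and defers the window-to-edge-label correspondence to \cite[Section~3]{AB16}. Your write-up simply makes that index bookkeeping explicit, and it checks out.
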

	
	The following construction of perfect necklaces is particularly nice and useful.
	
	\begin{lemma}[Corollary~6 in \cite{AB16}]\label{lemma:LexOrderPerfNeck}
		The concatenation of the words of $\set{0,1,\dots,a-1}^n$ in lexicographic order gives an $(a,n,n)$-perfect necklace.
	\end{lemma}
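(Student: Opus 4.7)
The plan is to identify the concatenation with a concrete indexing via base-$a$ representations and then, for each residue class of starting positions modulo $n$, exhibit a bijection between window starting positions and target words $y \in \A^n$.

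First, I would set notation: let $w^{(k)}$ denote $k$ written in base $a$ with exactly $n$ digits (most significant first), and let $C = w^{(0)} w^{(1)} \cdots w^{(a^n - 1)}$ read cyclically, so $|C| = n \cdot a^n$. For each starting position, write it as $qn + r$ with $0 \le q < a^n$ and $0 \le r < n$; the residue class mod $n$ is determined by $r$. The $n$-window at position $qn+r$ is the concatenation of the last $n-r$ digits of $w^{(q)}$ with the first $r$ digits of $w^{(q+1)}$ (indices on $q$ taken mod $a^n$, where the cyclic wrap matters only at $q = a^n - 1$). Define $f_r \colon \{0, 1, \ldots, a^n - 1\} \to \A^n$ by sending $q$ to this window. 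It suffices to show that $f_r$ is a bijection for each $r$, which by cardinality follows from injectivity.

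For $r = 0$ we have $f_0(q) = w^{(q)}$, which is manifestly a bijection. For $r \ge 1$, suppose $f_r(q) = f_r(q')$. Writing $q = a^{n-r} A + B$ and $q' = a^{n-r} A' + B'$ with $0 \le B, B' < a^{n-r}$, the last $n-r$ digits of $w^{(q)}$ and $w^{(q')}$ being equal gives $B = B'$. The key step is to translate the agreement of the first $r$ digits of $w^{(q+1)}$ and $w^{(q'+1)}$ into $\lfloor (q+1)/a^{n-r} \rfloor \equiv \lfloor (q'+1)/a^{n-r} \rfloor \pmod{a^r}$ and then read off $A = A'$. I would split into two cases based on whether $B < a^{n-r} - 1$ or $B = a^{n-r} - 1$: in the former, the quotients are $A$ and $A'$ respectively, so $A \equiv A' \pmod{a^r}$ and hence $A = A'$ since both lie in $\{0, \ldots, a^r - 1\}$; in the latter, both quotients increase by $1$ (with a possible wrap using the cyclic convention), so again $A \equiv A' \pmod{a^r}$.

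The main obstacle is the carry case $B = a^{n-r} - 1$, where $w^{(q)}$ ends in $(a-1)^{n-r}$ and the successor $w^{(q+1)}$ is obtained by a carry that affects the high-order digits we are trying to match. Handling this case requires only checking that the carry uniformly shifts both $A$ and $A'$ by $1$, so the congruence $A \equiv A' \pmod{a^r}$ is preserved; the lone corner case $q = a^n - 1$ is absorbed by the cyclicity of $C$. Once $f_r$ is a bijection for every $r \in \{0, 1, \ldots, n-1\}$, the definition of an $(a, n, n)$-perfect necklace is satisfied and the lemma follows.
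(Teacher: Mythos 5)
Your proof is correct. The paper itself gives no proof of this lemma --- it is imported verbatim as Corollary~6 of \cite{AB16} --- so there is no internal argument to compare against; the closest machinery in the paper (Proposition~\ref{prop:PerfNecksExist!}) obtains perfect necklaces abstractly from Euler tours of astute graphs, whereas you give a direct, self-contained arithmetic verification. Your key identification is sound: with $q = a^{n-r}A + B$, the window at position $qn+r$ records exactly the pair $\bigl(B,\ \lfloor (q+1 \bmod a^n)/a^{n-r}\rfloor\bigr)$, the first component from the low-order $n-r$ digits of $w^{(q)}$ and the second from the high-order $r$ digits of $w^{(q+1)}$; since $B=B'$ forces $q$ and $q'$ into the same carry case, your two-case analysis ($B < a^{n-r}-1$ versus $B = a^{n-r}-1$) correctly recovers $A \equiv A' \pmod{a^r}$ and hence $A = A'$, and the wrap at $q = a^n-1$ is absorbed by cyclicity exactly as you say. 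Injectivity of each $f_r$ plus the cardinality count $|\A^n| = a^n$ then gives the uniqueness required by the definition of an $(a,n,n)$-perfect necklace. The one presentational nit is that for $r \ge 1$ the agreement of the leading $r$ digits already gives equality of the quotients (both lie in $\{0,\dots,a^r-1\}$), so phrasing it as a congruence mod $a^r$ is slightly weaker than what you have, though it is exactly the right formulation for handling the wrap uniformly.
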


	\section{New Binary Upcycles}\label{sec:newbinary}
	
	For $n\le 3$, there are no upcycles for any alphabet \cite[Proposition~5.1]{G18}. For $n=4$, up to symmetries the only binary upcycle is $u=(001\diam110\diam)$, which we prove in detail in Example~\ref{ex:a=2,n=4,uniqueupcycle}. There were previously no known upcycles for $n\ge 5$. By \cite[Theorem~4.11]{G18}, the only possible upcycles for binary alphabets with diamondicity $d=1$ have word length $n=2^k$. 
	
	\subsection{List of binary upcycles for $n=8$}\label{sec:list}
	We used a computer search to obtain the following new binary upcycles.
	
	\begin{theorem}\label{n=8d=1}
		Binary upcycles for $n=8$ exist.
	\end{theorem}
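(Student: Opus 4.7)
The plan is to prove existence by exhibiting explicit examples, making tractable by the structural constraints already established a computer search. By \cite[Theorem~4.11]{G18}, a binary upcycle with diamondicity $d=1$ can only occur when $n$ is a power of $2$, so $n=8$ is a natural target; moreover, such an upcycle has length $a^{n-d}=128$, with diamonds occupying a single residue class modulo $8$ (by the $n$-periodicity of diamonds noted in the Preliminaries) and $112$ letter positions to be determined. For larger diamondicity $d$ dividing $n$ in a compatible way, the target length shrinks to $2^{8-d}$, and the admissible frames are constrained but still $n$-periodic.

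I would proceed by backtracking. First fix a candidate frame by choosing which residues mod $8$ carry $\diam$ (for $d=1$, up to cyclic rotation we may put the diamond at position $\equiv 0 \pmod 8$). Then extend a prefix of letter assignments one position at a time, maintaining the set of words of $\set{0,1}^8$ already covered by fully-formed windows. A tentative assignment is rejected as soon as it would cause a window to cover a previously covered word, or as soon as a partial suffix can no longer reach any completion consistent with the uncovered words. To reduce redundancy, break symmetry using the letter-permutation and reversal operations described in the Preliminaries (for instance, fix the first non-diamond character to be $0$), and also fix the rotation of the frame.

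The main obstacle is that even after symmetry reduction, the raw assignment space (up to $2^{112}$ for $d=1$) is enormous, so the pruning must be implemented very efficiently: the covered-words set should be maintained incrementally, and windows straddling the cyclic wrap-around must be handled by first committing the last $n-1$ positions (forming the ``seam'') before filling in the middle. Once a candidate sequence survives all pruning, verification is mechanical: enumerate the $256$ binary words of length $8$ and confirm that each is covered by exactly one window of the candidate, which by Proposition~\ref{prop:upcycle_length} is guaranteed to be longer than $n$. The proof of the theorem is then completed by presenting an explicit list of found upcycles, each of which satisfies the verification check.
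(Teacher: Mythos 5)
Your approach matches the paper's: the proof of Theorem~\ref{n=8d=1} is exactly a (non-exhaustive) computer search followed by direct verification, and the paper's proof consists solely of listing seven explicit upcycles for $\set{0,1}^8$, each of length $2^{8-1}=128$ with diamondicity $1$, exactly the parameters your search targets. The only caveat is that an existence proof of this kind is complete only once a concrete verified example is actually displayed, so your write-up would need to be accompanied by at least one of the found cycles --- which is all the paper's proof contains.
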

	\begin{proof}
		Each of the following cyclic partial words is an upcycle for $\set{0,1}^8$.
		\begin{enumerate}
			\item $\begin{aligned}[t](&0000010\diam1111101\diam0010010\diam1101101\diam1110000\diam0001111\diam1110011\diam0101100\diam\\&1110010\diam0101001\diam1000110\diam0100001\diam1011110\diam0101101\diam0000110\diam1101001\diam)\end{aligned}$
			\item $\begin{aligned}[t](&0000001\diam1111110\diam0111001\diam1110110\diam0100001\diam1011110\diam0010001\diam1101110\diam\\&0101001\diam1100100\diam0011011\diam1100110\diam0110100\diam1001010\diam0110000\diam1001110\diam)\end{aligned}$
			\item $\begin{aligned}[t] (&0000001\diam1101110\diam1111001\diam0101110\diam0010101\diam1101010\diam0010001\diam0001110\diam\\&1011001\diam0000110\diam1110001\diam0100111\diam1011000\diam0100110\diam1010001\diam1111110\diam)\end{aligned}$
			\item $\begin{aligned}[t](&0000001\diam0111110\diam1010001\diam0100110\diam1011001\diam0101110\diam0010101\diam1101010\diam\\&0010001\diam1001110\diam0110001\diam1101110\diam1000011\diam0111100\diam1000001\diam1111110\diam)\end{aligned}$
			\item $\begin{aligned}[t](&0100001\diam1011110\diam0101101\diam1110011\diam0101100\diam1110010\diam0101001\diam1000110\diam\\&0100000\diam1011101\diam0010010\diam1111101\diam0110000\diam0001111\diam1110000\diam1001101\diam)\end{aligned}$
			\item $\begin{aligned}[t](&0000001\diam0111101\diam1010010\diam1101101\diam0010110\diam1100001\diam1010110\diam0100001\diam\\&0010010\diam0111001\diam1000110\diam0111100\diam1010011\diam0111110\diam1000001\diam1111110\diam)\end{aligned}$
			\item $\begin{aligned}[t](&1011010\diam1110011\diam0100000\diam0011111\diam0100100\diam1011011\diam0101100\diam1110111\diam\\
				&0101000\diam1110001\diam0001010\diam1100000\diam1011111\diam1100001\diam0011010\diam0100101\diam)\end{aligned}$
		\end{enumerate}
	\end{proof}
	
	The computer search was not exhaustive, and we do not know how many distinct upcycles there are for $\set{0,1}^8$. After the preparation of this paper, William D. Carey improved our computer search and found hundreds of thousands of upcycles for $\set{0,1}^8$, distinct up to rotation \cite{Carey}. 
	In Corollary~\ref{cor:alphamult} we will see that each of these new binary upcycles generates upcycles for $\A^8$ for every even-sized alphabet $\A$.
	
	\subsection{Distinctness and the cross-join operation for upwords}\label{sec:chop'n'swap}
	
	In this section, we confirm that the upcycles presented in Theorem \ref{n=8d=1} are distinct up to symmetries (rotation, complementation, and reversal) and up to a new partial-words version of the cross-join operation, which generalizes the cross-join operation for De Bruijn cycles (see e.g. \cite{Fredricksen75,Fredricksen82}) to apply to upwords. This operation modifies an upword for $\A^n$ to obtain a new upword for $\A^n$. 
	
	If $u$ and $v$ are De Bruijn cycles for $\set{0,1}^n$, then $u$ can be obtained from $v$ by a sequence of cross-join operations \cite{MS15,JL20}. In contrast, we argue that there are no two upcycles presented in Theorem~\ref{n=8d=1} for which one can be obtained from the other by applying a sequence of operations: rotation, complementation, reversal, or cross-join operations. 
	
	First we define this operation for upwords. For De Bruijn cycles the graph-theoretic representation of this operation is straightforward, but for upcycles the analogous structures would have multiple strands (see Section \ref{subsec:Hamilton}). Here it is simpler to view the upcycle as a cyclic total word over an extended alphabet.
	
	\begin{prop}\label{thm:cns}
		Let $x$ and $y$ be partial words over $\A$ of length $n-1$. Suppose $u$ is an upword of which $x$ is a substring starting at indices $i_x$ and $j_x$, and $y$ is a substring starting at indices $i_y$ and $j_y$, such that $i_x < i_y < j_x < j_y$ (in the cyclic ordering of the indices of $u$). Then 
		\[
		u' := u_1 \cdots u_{i_x-1} u_{j_x} \cdots u_{j_y-1} u_{i_y} \cdots u_{j_x-1}u_{i_x} \cdots u_{i_y-1}u_{j_y} \cdots u_{\abs{u}}\] is an upword containing the same sets of windows (length-$n$ subwords over $\A\cup\{\diam\}$).
	\end{prop}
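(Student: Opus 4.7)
The plan is as follows. Decompose $u$ along the four indices as $u = ABCDE$ with $A = u_1 \cdots u_{i_x-1}$, $B = u_{i_x} \cdots u_{i_y-1}$, $C = u_{i_y} \cdots u_{j_x-1}$, $D = u_{j_x} \cdots u_{j_y-1}$, and $E = u_{j_y} \cdots u_{\abs{u}}$, so that the cross-join produces $u' = ADCBE$. Since $\abs{u} = \abs{u'}$ and $u$ is an upword, it suffices to show that the multisets of length-$n$ windows of $u$ and $u'$ coincide; this immediately forces $u'$ to cover each word of $\A^n$ exactly once.

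The central structural observation is that $B$ and $D$ both start with the length-$(n-1)$ partial word $x$, and $C$ and $E$ both start with $y$. Viewing $u$ as a walk through the De Bruijn graph $B(a,n-1)$, whose vertices are length-$(n-1)$ partial words and whose directed edges are length-$n$ windows, the decomposition $ABCDE$ corresponds to a walk that visits vertex $x$ at the start of $B$ and again at the start of $D$, and visits vertex $y$ at the start of $C$ and again at the start of $E$. Thus the subwalks traversed by $B$ and by $D$ both go from vertex $x$ to vertex $y$, and swapping them produces another legal walk $A, D, C, B, E$ using exactly the same multiset of edges; this is precisely the walk of $u'$.

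To turn this into a character-level verification, I would introduce a position map $\psi$ from window positions of $u'$ to window positions of $u$ that is a piecewise shift: $\psi$ fixes positions in $A$ and in $E$, sends a position in the new $D$-block to its corresponding position in $D$'s original location in $u$ (shift by $j_x - i_x$), and analogously for the $C$- and $B$-blocks. Then I would verify by case analysis, based on which blocks of $u'$ contain the two endpoints of the window starting at position $p$, that the length-$n$ window of $u'$ at $p$ equals the length-$n$ window of $u$ at $\psi(p)$. Within-block cases are immediate since each block keeps the same internal sequence of characters. Cross-boundary cases rely on the fact that the block following the boundary in $u'$ begins with the same length-$(n-1)$ prefix ($x$ or $y$) as the corresponding block following the analogous boundary in $u$, so the characters straddling each $u'$-boundary agree with those straddling the analogous $u$-boundary. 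Counting shows that the image of $\psi$ is exactly $\{1,\dots,\abs{u}-n+1\}$, so $\psi$ is a bijection between window positions.

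The main obstacle will be handling the case in which some block $B$, $C$, $D$, or $E$ has length less than $n-1$, so that an occurrence of $x$ or $y$ physically spans two adjacent blocks (for instance, the tail of $x$ starting at $i_x$ spills into $C$). This introduces algebraic compatibilities between $x$ and $y$ (a suffix of $x$ must equal a prefix of $y$, and so on) and the cleanest decomposition $B = x B_0$ no longer holds literally. I expect the walk-based argument above still applies, but the character-by-character verification must be carried out directly from the four defining identities $u_{i_x}\cdots u_{i_x+n-2} = u_{j_x}\cdots u_{j_x+n-2} = x$ and $u_{i_y}\cdots u_{i_y+n-2} = u_{j_y}\cdots u_{j_y+n-2} = y$, treating every $(n-1)$-window that meets $i_x$, $i_y$, $j_x$, or $j_y$ uniformly via these identities rather than via block containment.
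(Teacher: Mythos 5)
Your proposal is correct and follows essentially the same route as the paper's proof: both reduce to showing that the multiset of length-$n$ windows is preserved, dispose of windows lying inside a single block immediately, and handle boundary-crossing windows using the fact that each block following a boundary of $u'$ begins with the same length-$(n-1)$ string ($x$ or $y$) as the block that followed the corresponding boundary of $u$. The paper organizes the boundary case by taking the first new adjacent pair contained in a window rather than via your explicit position bijection $\psi$, and it is no more detailed than you are about the degenerate case where a block is shorter than $n-1$; your planned resolution via the four defining occurrences of $x$ and $y$ is the right one.
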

	
	\begin{proof}
		To prove that $u'$ is an upword for $\A^n$, it suffices to show that the windows of $u'$ are the windows of $u$. Observe that any window of $u'$ contained entirely within $u_1 \cdots u_{i_x-1}$, $u_{j_x} \cdots u_{j_y-1}$, $u_{i_y} \cdots u_{j_x-1}$, $u_{i_x} \cdots u_{i_y-1}$ or $u_{j_y} \cdots u_{\abs{u}}$ is unchanged. All other windows contain at least one of the pairs $u_{i_x-1} u_{j_x}$, $u_{j_y-1} u_{i_y}$, $u_{j_x-1}u_{i_x}$, and $u_{i_y-1}u_{j_y}$. Suppose $u_{i_x-1} u_{j_x}$ is the first of these pairs contained in a particular window; the other cases are similar. Because $u_{i_x-1} u_{j_x}$ is the first such pair, the part of the window before it (including $u_{i_x-1}$) is contained in $u_1 \cdots u_{i_x-1}$, which is unchanged. The part of the window after this pair (including $u_{j_x}$) has length at most $n-1$, so is contained in $u_{j_x} \cdots u_{j_x + n-2} = u_{i_x} \cdots u_{i_x + n-2}$. Thus the whole window is unchanged.
	\end{proof}
	
	A cross-join operation on an upcycle can be similarly defined, and a similar proof holds. An example of an upcycle $u$ for $\{0,1,2,3\}^4$ was given in Example 5.3 of \cite{G18}. In Example~\ref{ex:cns} below we demonstrate an upcycle $u'$ obtained from $u$ by a cross-join. We also show that $u'$ cannot be obtained from $u$ by permuting the letters and/or reversal.

	\begin{example}\label{ex:cns}
		Assume $\A = \set{0,1,2,3}$ and $n=4.$ Let $x=3\diam1$ and $y= 21\diam$. The upcycle given in Example 5.3 of \cite{G18} is
		$$
		u = (001\diam110\diam00\underline{3\diam112\diam021\diam}130\diam02\underline{3\diam132\diam201\diam310\diam203\diam312\diam221\diam}330\diam223\diam332\diam)
		$$
		where the underlined substrings both begin with $x$ and end with $y$. Performing the upcycle cross-join operation from Proposition~\ref{thm:cns} to $u$ with this $x$ and $y$, we get
		$$
		u' = (001\diam110\diam00\underline{3\diam132\diam201\diam310\diam203\diam312\diam221\diam}130\diam02\underline{3\diam112\diam021\diam}330\diam223\diam332\diam)
		$$
		with the substrings beginning with $x$ and ending with $y$ again underlined for emphasis.
		
		To see that $u'$ cannot be obtained from $u$ by permutation of letters and/or reversal, note that in both upwords, the word $11110000$ is covered by $1\diam 110\diam 00$, and no other word of the form $aaaabbbb$ is covered. If $u'$ could be obtained from $u$ by permuting the letters and/or reversal, then it would have to be obtained by applying a permutation that transposes $0$ and $1$ and reversing. But the window covering $11110000$ in both $u'$ and $u$ is $1\diam110\diam00$, and applying a permutation that transposes $0$ and $1$ and reversing $u'$ will result in an upword that covers $11110000$ with the window $11\diam100\diam0$.
	\end{example}

	\begin{prop}
		There are no two upcycles presented in Theorem~\ref{n=8d=1} for which one can be obtained from the other by applying a sequence of the operations: permuting the letters of the alphabet, reversal, or cross-join operations. 
	\end{prop}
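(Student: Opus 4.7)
The plan is to identify an invariant of upcycles that is preserved by each of the allowed operations, and then verify by a finite case check that this invariant differs across the seven upcycles in Theorem~\ref{n=8d=1}.

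The natural invariant is the multiset of windows $M(u)$ of an upcycle $u$, viewed as a multiset of partial words in $(\A\cup\{\diam\})^n$. This multiset is unchanged by cyclic rotation (which merely chooses a different starting index), and it is unchanged by the cross-join operation: that is precisely the content of Proposition~\ref{thm:cns}, which asserts that the resulting upword has the same set of windows. Reversal of $u$ sends $M(u)$ to the multiset $R(M(u))$ obtained by individually reversing each window, and a letter permutation $\pi$ sends $M(u)$ to the multiset $\pi(M(u))$ obtained by applying $\pi$ to each window. For the binary alphabet the only nontrivial letter permutation is complementation $C$, so the orbit of $M(u)$ under the group generated by reversal and letter permutations has at most four elements: $M(u)$, $R(M(u))$, $C(M(u))$, and $RC(M(u))$.

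Consequently, two upcycles $u$ and $v$ from Theorem~\ref{n=8d=1} can be obtained from one another by a sequence of the allowed operations only if $M(v)$ lies in this four-element orbit of $M(u)$. The remaining step is therefore the finite verification: for each of the $\binom{7}{2}=21$ pairs of upcycles and each of the (up to) four transformed multisets of one upcycle, confirm that equality with the multiset of the other upcycle fails. Equivalently, for each ordered pair $(u,v)$ one exhibits a single window appearing in $M(u)$ that lies in none of $M(v)$, $R(M(v))$, $C(M(v))$, or $RC(M(v))$; such a witness necessarily exists once the four candidate multisets have been computed.

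The main obstacle is the sheer volume of this verification: each upcycle has $128$ windows, and $21$ pairs must be checked against four candidate transformations each. I would carry this out by a short computer check and report the result compactly in the paper, either by listing a single distinguishing window per pair or by tabulating a more digestible invariant that is visibly determined by $M(u)$—for instance, for each diamond position $k\in[8]$, the sorted list of the $16$ windows of $u$ whose diamond lies in the $k$-th slot, together with the effect of $R$ and $C$ on this list. Since each such tabulation is recoverable from $M(u)$, it is preserved by rotation and cross-join, and any visible discrepancy across the seven upcycles (and their reversals and complements) completes the proof.
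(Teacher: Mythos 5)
Your proposal is correct and follows essentially the same approach as the paper: both arguments use Proposition~\ref{thm:cns} to show the multiset of windows is a cross-join invariant, account for reversal and complementation by checking a four-element orbit, and finish with a finite verification over the pairs of upcycles. The only difference is cosmetic --- the paper replaces the full window multiset by the coarser derived invariant $\W_u$ (the multiset of maximal diamond-free subwords bordered by diamonds), which makes the final inspection feasible by hand, a move your last paragraph already anticipates.
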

	\begin{proof}
		Given an upcycle $u=(u_1\dots u_N)$, let $\W_u$ be the multiset of subwords of $u$ that contain no diamonds and are bordered by diamonds in $u$. In other words, $w\in \W_u$ if and only if $w=u_i u_{i+1}\dots u_k$ has no diamonds and $u_{i-1}=u_{k+1}=\diam$, where indices are taken modulo $N$.
		
		By Proposition~\ref{thm:cns}, if $u'$ is obtained from $u$ by applying cross-join operations, then every window of $u$ is a window of $u'$. This implies that $\W_u=\W_{u'}$ since every element of $\W_u$ is a subword of a window of $u$ and the same is true of $u'$.

		By inspection, for each $u\neq u'$ presented in Theorem~\ref{n=8d=1}, we have that $\W_u\neq \W_{u'}$, $\W_u$ is not equal to the set obtained by reversing each element in $\W_{u'}$, $\W_u$ is not obtained by applying the permutation $0\mapsto 1$ and $1\mapsto 0$ to each element of $\W_{u'}$. Moreover, $\W_u$ is not obtained by applying the permutation $0\mapsto 1$ and $1\mapsto 0$ to each element of $\W_{u'}$ and then reversing. It then follows that $u$ cannot be obtained from $u'$ by the above mentioned operations.
	\end{proof}
	
	\section{Constructions of perfect necklaces}\label{sec:constructions}
	
	Given the important role of perfect necklaces throughout this paper, we present three constructions of perfect necklaces in Propositions \ref{prop:pn_const1} and \ref{prop:pn_const2}. These constructions modify a given De Bruijn sequence to build the perfect necklace, and this construction method may be of independent interest.
	
	The proposition below describes a way to produce larger perfect necklaces given some fixed perfect necklace. We note that the case $r=0$ is proven in \cite[Proposition~13]{AB16}. We use it in the proof of Theorem~\ref{thm:divalphmult}.
	
	\begin{prop}\label{prop: smallperfect}
		Let $0 \leq r < n$. Let $w = (w[1] \cdots  w[a^n])$ be an $(a,n,n+r)$-perfect necklace, where each $w[p]$ is a word of length $n+r$. Then for every $q \geq 1$, $v = (v[1] \cdots  v[a^n])$ is an $(a,n,nq + r)$-perfect necklace, where $v[p] = (w[p]_1 \cdots w[p]_n)^q w[p]_{n+1} \cdots w[p]_{n+r}$.
	\end{prop}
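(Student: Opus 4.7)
The plan is to fix a residue $j \in [nq+r]$ and show that as $p$ ranges over $[a^n]$, the $n$-window of $v$ at position $(p-1)(nq+r)+j$ covers each element of $\A^n$ exactly once. Since $|v| = a^n(nq+r)$ matches the required length, this is exactly the condition for $v$ to be an $(a,n,nq+r)$-perfect necklace.

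Writing each block as $v[p] = (w[p]_1 \cdots w[p]_n)^q\, w[p]_{n+1} \cdots w[p]_{n+r}$, I would call the first $nq$ symbols its \emph{periodic part} and the last $r$ symbols its \emph{tail}, then split into three cases that partition $[nq+r]$ according to where the $n$-window sits relative to the block structure: Case~(I) $1 \le j \le n(q-1)+1$, where the window lies entirely in the periodic part of $v[p]$; Case~(II) $n(q-1)+2 \le j \le nq+r-n+1$, which is nonempty only when $r \ge 1$ and in which the window is contained in $v[p]$ but straddles the boundary between periodic part and tail; and Case~(III) $nq+r-n+2 \le j \le nq+r$, where the window extends from $v[p]$ into the next block $v[p+1]$.

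In Case~(I), the period-$n$ structure of the periodic part forces the window to be a fixed cyclic rotation of $w[p]_1 \cdots w[p]_n$ by $(j-1) \bmod n$ positions. Taking residue $1$ modulo $n+r$ in the definition of perfect necklace applied to $w$ gives that $p \mapsto w[p]_1 \cdots w[p]_n$ is a bijection $[a^n] \to \A^n$; composing with cyclic rotation (also a bijection on $\A^n$) handles this case. In Cases~(II) and (III), I would carry out a short index computation identifying the $n$-window of $v$ with an $n$-window of $w$ at a position whose residue modulo $n+r$ depends only on $j$. Concretely, with $k := nq-j+1$ in Case~(II) (so $n-r \le k \le n-1$), the window equals $w[p]_{n-k+1} \cdots w[p]_{2n-k}$, a valid length-$n$ substring of $w[p]$; with $m := nq+r-j+1$ in Case~(III) (so $1 \le m \le n-1$), the window equals $w[p]_{n+r-m+1} \cdots w[p]_{n+r}\, w[p+1]_1 \cdots w[p+1]_{n-m}$, i.e., the $n$-window of $w$ starting at position $p(n+r)-m+1$. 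In both subcases the starting position in $w$ has a fixed residue modulo $n+r$, so the perfect-necklace property of $w$ at that residue yields the required bijection.

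The main obstacle is the index bookkeeping in Cases~(II) and (III): one must carefully split the window of $v$ into its periodic-part and tail pieces (and, in Case~(III), into its across-the-block-boundary pieces) and verify that these glue into a genuine $n$-window of $w$ at the claimed position, which requires checking inequalities such as $n-r \le k \le n-1$ to ensure the substring of $w[p]$ lies fully within $w[p]$. Once this identification is verified, every case reduces cleanly to the perfect-necklace hypothesis on $w$ together with the trivial observation that cyclic rotation is a bijection on $\A^n$.
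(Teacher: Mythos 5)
Your proposal is correct and follows essentially the same route as the paper's proof: the same three-way case split on the window's position relative to the block structure (entirely in the periodic part, straddling the periodic-part/tail boundary, or crossing into the next block), with each case reduced either to a cyclic rotation of $w[p]_1\cdots w[p]_n$ or to an $n$-window of $w$ at a position of fixed residue modulo $n+r$. Your index substitutions $k = nq-j+1$ and $m = nq+r-j+1$ are just reparametrizations of the paper's $j = i - n(q-1)$, and the resulting identifications of the windows agree with the paper's.
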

	
	\begin{proof}
		We show that for all $i\in\set{1,\ldots,nq+r}$, the $n$-window starting at the $i$th position of $v[p]$ is distinct across all $p\in\set{1,\ldots,a^n}$. Since each $v[p]$ has length $nq+r$, this implies that $v=(v[1] \cdots  v[a^n])$ is an $(a,n,nq + r)$-perfect necklace.
		
		Since the first $nq$ characters of each $v[p]$ are $n$-periodic, we first consider $i\in\set{1,\ldots,nq-n+1}$. Let $\ell$ be the unique integer such that $i=n\ell+j$ for some $j\in\set{1,\ldots,n}$. Then for any $p\in\set{1,\ldots,a^n}$,
		\begin{align*}v[p]_i \cdots v[p]_{i+n-1} &= v[p]_{n\ell+j} \cdots v[p]_{n\ell+j+n-1}\\
			&= v[p]_{j} \cdots v[p]_{j + n - 1}\\
			&= v[p]_{j} \cdots v[p]_{n} v[p]_{1} \cdots v[p]_{j - 1}\\
			&= w[p]_j \cdots w[p]_n w[p]_1 \cdots w[p]_{j-1}\end{align*}
		is just a permutation of the digits of $w[p]_1 \cdots w[p]_n$, and since $w$ is an $(a,n,n+r)$-perfect necklace, $w[p]_1 \cdots w[p]_n$ visits all words in $\A^n$ as $p$ varies from $1$ to $a^n$. Thus $v[p]_i \cdots v[p]_{i+n-1}$ visits all words in $\A^n$ as $p$ varies from $1$ to $a^n$ as well.
		
		Next, consider $i \in \set{nq-n+2,\ldots,nq+r-n+1}$. Let $j=i-n(q-1)\in\set{2,\ldots,r+1}$. Then
		\begin{align*}v[p]_{i} \cdots v[p]_{i+n-1} &= v[p]_{n(q-1)+j} \cdots v[p]_{n(q-1)+j+n-1}\\
			&= w[p]_{j} \cdots w[p]_{n+j-1}.\end{align*}
		Again, since $w$ is an $(a,n,n+r)$-perfect necklace, this visits all words of $\A^n$ as $p$ varies.
		
		Finally, consider $i \in \set{nq+r-n+2, \ldots, nq+r}$. Once again, let $j=i-n(q-1)\in\set{r+2,\ldots,n+r}$. Then
		\begin{align*}v[p]_{i} \cdots v[p]_{nq + r} v[p+1]_1 \cdots v[p+1]_{i - n(q-1) - r - 1} &= v[p]_{n(q-1)+j} \cdots v[p]_{nq + r} v[p+1]_1 \cdots v[p+1]_{j - r - 1}\\
			&= w[p]_j \cdots w[p]_{n+r}w[p+1]_1 \cdots w[p+1]_{j - r -1}.\end{align*}
		Once again, since $w$ is an $(a,n,n+r)$-perfect necklace, this visits all words of $\A^n$ as $p$ varies.
	\end{proof}
	
	In the remainder of this section, for convenience in these particular results, we index from 0.
	
	\begin{prop}\label{prop:pn_const1}
		Let $a,n \in \mathbb{N}$ with $a\geq 2$, and let $w = (w_0 \dots w_{a^n-1})$ be a De Bruijn cycle for $\set{0,1, \ldots, a-1}^n$. Let $r \in \set{1,\ldots,n}$ satisfy  $\mathrm{gcd}(a^n, r) = 1$. For $p \in \set{0, 1, \ldots, a^n - 1}$, let $w[p] = w_{rp} \cdots w_{rp + n-1} w_{rp} \cdots w_{rp + r-1}$, (with indices taken modulo $a^n$),
		then $v=(w[0]\cdots w[a^n-1])$ is an $(a, n, n+r)$-perfect necklace.
	\end{prop}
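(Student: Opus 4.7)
The plan is to verify the definition of an $(a,n,n+r)$-perfect necklace directly: I fix $j \in \{0, 1, \ldots, n+r-1\}$ and show that as $p$ ranges over $\{0, 1, \ldots, a^n - 1\}$, the $n$-window of $v$ starting at position $p(n+r) + j$ realizes every word of $\A^n$ exactly once. It will be convenient first to observe the identity $w[p]_k = w_{rp + (k \bmod n)}$ for $k \in \{0, \ldots, n+r-1\}$ (with subscripts of $w$ taken modulo $a^n$), which uniformly encodes both the length-$n$ ``prefix'' $w_{rp}\cdots w_{rp+n-1}$ and the length-$r$ ``suffix'' $w_{rp}\cdots w_{rp+r-1}$ of each block $w[p]$.

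I would then case-split on where the window sits within a block. If $j \in \{0,\ldots,r\}$, the entire window lies inside $w[p]$, and a direct substitution shows that it equals the cyclic rotation by $j$ of $(w_{rp}, w_{rp+1}, \ldots, w_{rp+n-1})$. If $j \in \{r+1,\ldots,n-1\}$, the window begins in the prefix of $w[p]$, passes through the suffix of $w[p]$, and spills into the prefix of $w[p+1]$; the crucial telescoping is that the suffix of $w[p]$, namely $w_{rp},\ldots,w_{rp+r-1}$, followed by the first $j-r$ characters of $w[p+1]$, namely $w_{rp+r},\ldots,w_{rp+j-1}$, assemble into the single contiguous run $w_{rp},\ldots,w_{rp+j-1}$, so the window once again reduces to the cyclic rotation by $j$ of $(w_{rp},\ldots,w_{rp+n-1})$. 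Finally, if $j \in \{n,\ldots,n+r-1\}$, the window begins in the suffix of $w[p]$ and ends in the prefix of $w[p+1]$; a similar telescoping shows that the window equals the $n$-window of $w$ starting at position $rp + j - n$.

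To close, I note that $\gcd(r, a^n) = 1$ implies that the map $p \mapsto rp + c \pmod{a^n}$ is a bijection on $\mathbb{Z}/a^n\mathbb{Z}$ for any constant $c$, so as $p$ varies the starting index of the relevant $w$-window sweeps out all residues. Since $w$ is a De Bruijn cycle, these windows realize each word of $\A^n$ exactly once, and composing with a fixed cyclic rotation (a bijection on $\A^n$) preserves this property, completing the argument in every case. I expect the main obstacle to be the index bookkeeping in the middle case $j \in \{r+1,\ldots,n-1\}$, where the window crosses both the internal boundary of $w[p]$ and the boundary between $w[p]$ and $w[p+1]$; I would verify the arithmetic on a small example such as $a = 2$, $n = 3$, $r = 1$ before writing out the general computation.
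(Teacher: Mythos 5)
Your proposal is correct and follows essentially the same route as the paper's proof: fix the offset $j$, split into the three cases according to whether the window lies inside a block, crosses the internal prefix/suffix boundary, or straddles two blocks, observe that in the first two cases the window is a fixed cyclic rotation of $w_{rp}\cdots w_{rp+n-1}$ and in the third it is the $n$-window of $w$ at $rp+j-n$, and finish using that $\gcd(r,a^n)=1$ makes $p\mapsto rp+c$ a bijection and that rotation is a bijection on $\A^n$. The only differences are cosmetic (your case boundary places $j=r$ in the first case rather than the second, and you package the block indexing as $w[p]_k=w_{rp+(k\bmod n)}$).
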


	\begin{proof}
		Let $j\in\{0,\dots,n+r-1\}$. We show that as $p$ ranges from $0$ to $a^{n}-1$, $v_{(n+r)p + j}\ldots v_{(n+r)p + j + n-1}$ ranges over every word in $\{0,\dots,a-1\}^n$, which proves that $v$ is a perfect necklace.

		First we consider the case where $j \in \set{0, \ldots , r-1}$. Note that the length of each word $w[p]$ is $n+r$, so we have that for $p\in \set{0, 1, \ldots, a^n - 1}$, $v_{(n+r)p} = w[p]_0=w_{rp}$. We then have that
		\begin{align*}
			v_{(n+r)p + j}\ldots v_{(n+r)p + j + n-1} &= w_{rp+ j} \ldots w_{rp + n-1} w_{rp} \ldots w_{rp + j - 1}
		\end{align*}
		(which is interpreted to be $w_{rp} \ldots w_{rp + n-1}$ when $j = 0$) is just a cyclic permutation of $w_{rp} \ldots w_{rp + n-1}$. Since $\mathrm{gcd}(a^n, r) = 1$ and $w$ is a De Bruijn cycle, as $p$ ranges from $0$ to $a^n-1$, $w_{rp} \ldots w_{rp + n-1}$ ranges over every window of $w$ and thus every word in $\{0,\dots,a-1\}^n$. And since $w_{rp+ j} \ldots w_{rp + n-1} w_{rp} \ldots w_{rp + j - 1}$ is a cyclic permutation of $w_{rp} \ldots w_{rp + n-1}$, the same is true for $w_{rp+ j} \ldots w_{rp + n-1} w_{rp} \ldots w_{rp + j - 1}$ as $p$ ranges from $0$ to $a^n-1$, completing the proof in this case.
		
		If $j \in \set{r, \ldots , n-1}$, then
		\begin{align*}
			v_{(n+r)p + j}\ldots v_{(n+r)p + j + n-1} &= w_{rp+ j} \ldots w_{rp + n-1} w_{rp} \ldots w_{rp + r - 1}w_{r(p+1)} \ldots w_{r(p+1) + j-r-1}\\
			&=w_{rp+ j} \ldots w_{rp + n-1} w_{rp} \ldots w_{rp + r - 1}w_{rp+r} \ldots w_{rp + j-1}\\
			&=w_{rp+ j} \ldots w_{rp + n-1} w_{rp} \ldots w_{rp + j-1}
		\end{align*}
		(where $w_{r(p+1)} \ldots w_{r(p+1) + j-r-1}$ is similarly interpreted to be the empty word when $j=r$) is again a cyclic permutation of $w_{rp} \ldots w_{rp + n-1}$. Therefore, as above, as $p$ ranges from $0$ to $a^{n}-1$, $w_{rp+ j} \ldots w_{rp + n-1} w_{rp} \ldots w_{rp + j-1}$ ranges over every word in $\{0,\dots,a-1\}^n$.
		
		Now we consider the final case, where $j \in \set{n, \ldots, n+r - 1}$. Set $i = j-n$. Then
		\begin{align*}
			v_{(n+r)p + j}\ldots v_{(n+r)p + j + n-1} &= w_{rp+ i} \ldots w_{rp + r-1} w_{r(p+1)} \ldots w_{r(p+1) + i - r + n -1} \\
			&= w_{rp+i} \ldots w_{rp+ i + n -1}
		\end{align*}
		which is just a length-$n$ subword of $w$. Therefore, again, as $p$ ranges from $0$ to $a^{n}-1$, $w_{rp+i} \ldots w_{rp+ i + n -1}$ ranges over every word in $\{0,\dots,a-1\}^n$.
	\end{proof}
	
	\begin{example}
		Take the De Bruijn cycle $w=(00011101)$ for $\{0,1\}^3= \set{0,\ldots,a-1}^n$. 
		Let $r=1$. The construction in Proposition~\ref{prop:pn_const1} gives
		$$
		v = (0000 ~ 0010 ~ 0110 ~ 1111 ~ 1101 ~ 1011 ~ 0100 ~ 1001)
		$$
		which is a $(2,3,4)$-perfect necklace. (Additional spacing is provided above for the reader's convenience.)
	\end{example}
	
	Proposition \ref{prop:pn_const2} below provides another method to construct a perfect necklace when a De Bruijn cycle is given. Specifically, when a De Bruijn cycle for $\{0,\dots,a-1\}^n$ is given, Proposition \ref{prop:pn_const2} produces an $(a,n,2n-1)$-perfect necklace.
	
	\begin{prop}\label{prop:pn_const2}
		Let $a,n \in \mathbb{N}$ with $a\geq 2$, and let $w = (w_0 ,\dots, w_{a^n-1})$ be a De Bruijn cycle for $\set{0,1, \ldots, a-1}^n$. For $p \in \set{0, 1, \ldots, a^n - 1}$, let $w[p] = w_{-p} \ldots w_{-p + n-1} w_{-p} \ldots w_{-p + n -2}$, (with indices taken modulo $a^n$) for $p \in \set{0, 1, \ldots, a^n - 1}$,
		then $v=(w[0]\cdots w[a^n-1])$ is an $(a, n, 2n-1)$-perfect necklace.
	\end{prop}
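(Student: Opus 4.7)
The plan is to mirror the case analysis used in Proposition~\ref{prop:pn_const1}. Fix $j \in \set{0, \ldots, 2n-2}$; it suffices to show that as $p$ ranges over $\set{0, \ldots, a^n-1}$, the $n$-window of $v$ at position $(2n-1)p + j$ ranges over every word in $\A^n$, since $v$ has length $a^n(2n-1)$.

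First I would record the structural observation that $w[p]_i = w_{-p + (i \bmod n)}$ for $i \in \set{0, \ldots, 2n-2}$; in other words, $w[p]$ consists of the length-$n$ window $w_{-p}\cdots w_{-p+n-1}$ followed by its own first $n-1$ characters, so it is ``$n$-periodic up to length $2n-1$.'' This observation is what makes the case analysis clean.

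I then split into two cases depending on whether the window at position $(2n-1)p+j$ lies entirely inside $w[p]$ or spills into $w[p+1]$. If $j \in \set{0, \ldots, n-1}$, the window sits inside $w[p]$, and the periodicity observation gives
\[
v_{(2n-1)p+j}\cdots v_{(2n-1)p+j+n-1} = w_{-p+j}\cdots w_{-p+n-1}\, w_{-p}\cdots w_{-p+j-1},
\]
which is a cyclic rotation by $j$ of the length-$n$ window of $w$ starting at position $-p$. If instead $j \in \set{n, \ldots, 2n-2}$, set $m = j - n \in \set{0, \ldots, n-2}$; unfolding the first $2n-1-j$ characters (which lie in the repeated portion of $w[p]$) together with the remaining $m+1$ characters (which lie in the initial portion of $w[p+1]$) yields
\[
v_{(2n-1)p+j}\cdots v_{(2n-1)p+j+n-1} = w_{-p+m}\cdots w_{-p+n-2}\, w_{-p-1}\, w_{-p}\cdots w_{-p+m-1},
\]
which I would identify as a cyclic rotation by $m+1$ of the length-$n$ window of $w$ starting at position $-p-1$.

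In each case the window of $v$ is a cyclic rotation of a length-$n$ window of $w$, and as $p$ varies over $\set{0, \ldots, a^n-1}$ the starting index of that window of $w$ (either $-p$ or $-p-1$) also ranges over every residue modulo $a^n$. Since $w$ is a De Bruijn cycle and any cyclic rotation is a bijection on $\A^n$, the window of $v$ visits every word of $\A^n$ exactly once. The main delicacy is the boundary bookkeeping in the second case: verifying that the ``jump'' from $w[p]$ to $w[p+1]$ places $w_{-p-1}$ immediately after $w_{-p+n-2}$ in just the right way for the resulting $n$-tuple to be a cyclic rotation of a genuine length-$n$ window of $w$. Once this is checked the remainder is routine case analysis.
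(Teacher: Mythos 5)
Your proposal is correct and follows essentially the same route as the paper's proof: the same split into the case $j\in\set{0,\dots,n-1}$ (window inside $w[p]$) and $j\in\set{n,\dots,2n-2}$ (window spilling into $w[p+1]$), with each window of $v$ identified as a cyclic rotation of the $n$-window of $w$ starting at $-p$ or $-p-1$ respectively. The boundary bookkeeping you flag as the main delicacy checks out and matches the paper's computation exactly.
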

	
	\begin{proof}
		This is proved in a very similar way as the proof of Proposition \ref{prop:pn_const1}.
		
		We first consider the case that $j \in \set{0, \ldots , n-1}$, then for $p \in \set{0, 1, \ldots, a^n - 1}$,
		\begin{align*}
			v_{(2n-1)p + j}\ldots v_{(2n-1)p + j + n-1} &= w_{-p+ j} \ldots w_{-p + n-1} w_{-p} \ldots w_{-p + j - 1}
		\end{align*}
		(when $j = 0$, this is interpreted to just be $w_{-p} \ldots w_{-p + n-1}$) is just a cyclic permutation of $w_{-p} \ldots w_{-p + n-1}$, an $n$-window of $w$. Therefore, by similar reasoning as in the proof of Proposition \ref{prop:pn_const1} as $p$ ranges from 0 to $a^n-1$, $w_{-p+ j} \ldots w_{-p + n-1} w_{-p} \ldots w_{-p + j - 1}$ ranges over all words of $\{0,\dots,a-1\}^n$, completing the proof in this case.
		
		If $j \in \set{n, \ldots, 2n-2}$, then we first set $i = j-n$. We have that
		\begin{align*}
			v_{(2n-1)p + j}\ldots v_{(2n-1)p + j + n-1} &= w_{-p+ i} \ldots w_{-p + n - 2} w_{-(p+1)} \ldots w_{-(p+1) + i} \\
			&=w_{-p+ i} \ldots w_{-p + n - 2} w_{-p-1} \ldots w_{-p + i - 1}
		\end{align*}
		is just a cyclic permutation of $w_{-p-1} \ldots w_{-p + n-2}$. So letting $p$ vary, we get this cyclic permutation for each length $n$ subword of $w$. So again as $p$ ranges from 0 to $a^n-1$, $w_{-p+ i} \ldots w_{-p + n - 2} w_{-p-1} \ldots w_{-p + i - 1}$ ranges over all words of $\{0,\dots,a-1\}^n$, completing the proof.
	\end{proof}
	
	\begin{example}
		The cycle $w=(0011)$ is a De Bruijn cycle for $\{0,1\}^2 = \{0,\dots, a-1\}^n$. Applying the construction from Proposition~\ref{prop:pn_const2} gives
		$$
		v = (000 ~ 101 ~ 111 ~ 010)
		$$
		which is a $(2,2,3)$-perfect necklace. (Additional spacing is provided above for the reader's convenience.)
	\end{example}
	
	\section{Alphabet Multiplier}\label{sec:alph_mult}
	
	We begin by defining a few operations on (cyclic) partial words. For $w$ a (cyclic) partial word over $\set{0,1,2, \ldots, k-1}$ and $a$ an integer, we define $a \cdot w$ to be the (cyclic) partial word over $\set{0,a,2a, \ldots, (k-1)a}$ given by multiplying each character of $w$ by $a$, where $a \cdot \diam := \diam$. If $w$ is a (cyclic) partial word over an alphabet of nonnegative integers, then we take $w$ mod $a$ to be the (cyclic) partial word obtained by replacing each non-diamond character by its remainder modulo $a$. If $v$ and $w$ are (cyclic) partial words of equal length with the same frame,
	then we define $v + w$ to be the (cyclic) partial word given by adding the characters of $v$ and $w$ componentwise, where $\diam + \diam := \diam$. If $w$ is a (cyclic) partial word, then we denote the word obtained by removing the diamond characters by $w \setminus \diam$. Lastly, if $w$ is a (cyclic) partial word, recall that $w^q$ is the concatenation of $q$ copies of $w$. 
	
	The following theorem gives a construction of an upcycle for $\set{0,1,\ldots,ak-1}^n$ with diamondicity $d$ provided an upcycle for $\set{0,1,\ldots,a-1}^n$ with diamondicity $d$ is given.
	
	\begin{theorem}[Alphabet Multiplier]\label{thm: alphamult}
		Let $u$ be an upcycle for $\set{0,1,\ldots,a-1}^n$ with diamondicity $d$, and let $k \ge 1$. Let $v$ be a cyclic partial word over $\set{0,1,\ldots,k-1}$ with the same length and frame as $u^{(k^{n-d})}$. Then $a \cdot v + u^{(k^{n-d})}$ is an upcycle for $\set{0,1,\ldots,ak-1}^n$ if and only if $v\setminus \diam$ is a $(k, n-d, \frac{(n-d)a^{n-d}}{n})$-perfect necklace.
	\end{theorem}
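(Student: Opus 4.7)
The plan is to decompose each target word $x \in \set{0,\ldots,ak-1}^n$ uniquely as $x = a \cdot y + z$ with $y \in \set{0,\ldots,k-1}^n$ and $z \in \set{0,\ldots,a-1}^n$, then analyze when a window of $w := a \cdot v + u^{(k^{n-d})}$ at position $i$ covers $x$. Since $v$ and $u^{(k^{n-d})}$ share diamond positions, at each non-diamond position $j$ of the window we have $w_{i+j-1} = a v_{i+j-1} + u^{(k^{n-d})}_{i+j-1}$ with both summands in their intended ranges. Hence $w$ covers $x$ at $i$ if and only if both (a) $u^{(k^{n-d})}$ covers $z$ at position $i$, and (b) $v_{i+j-1} = y_j$ at every non-diamond position $j$ of the window.

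For each $z$, since $u$ is an upcycle there is a unique $p = \phi(z) \in [a^{n-d}]$ at which $u$ covers $z$, so the positions where $u^{(k^{n-d})}$ covers $z$ form the arithmetic progression $P_p := \set{p + j a^{n-d} : 0 \leq j < k^{n-d}}$; by the $n$-periodicity of the diamond pattern of $u$, every $i \in P_p$ has the same window frame $F_p$. Combining (a) and (b), the upcycle condition decouples across $p$: $w$ is an upcycle if and only if, for every $p \in [a^{n-d}]$, the map $\Psi_p : P_p \to \set{0,\ldots,k-1}^{n-d}$ sending $i$ to the $(n-d)$-tuple of non-diamond entries of $v$'s window at $i$ is a bijection.

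I would then rewrite $\Psi_p$ in terms of $v \setminus \diam$. Let $\pi$ enumerate the non-diamond positions of $v$, and let $f_p \in \set{0,\ldots,n-1}$ be the offset from $p$ to the first non-diamond in the window at $p$. The $n-d$ non-diamond entries of $v$'s window at $i \in P_p$ appear as $n-d$ consecutive characters of $v \setminus \diam$ starting at position $\pi(i + f_p)$. By $n$-periodicity of the diamond pattern, every length-$a^{n-d}$ interval of $v$ contains exactly $t := \frac{(n-d) a^{n-d}}{n}$ non-diamonds, so $\pi(q + a^{n-d}) = \pi(q) + t$ for every non-diamond position $q$. Therefore, as $i$ sweeps $P_p$, the starting rank $\pi(i + f_p)$ traverses exactly the residue class of $\pi(p + f_p)$ modulo $t$ within $[L]$, where $L = t k^{n-d} = |v \setminus \diam|$.

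Finally, as $p$ ranges over $[a^{n-d}]$ the value $\pi(p + f_p) \bmod t$ covers every element of $\set{0,\ldots,t-1}$: every non-diamond position $q \in [a^{n-d}]$ arises as $p + f_p$ for some $p$, and $\pi$ restricted to non-diamond positions of $[a^{n-d}]$ is a bijection onto $[t]$. Hence ``$\Psi_p$ is a bijection for every $p$'' is equivalent to ``for every residue class $j$ modulo $t$, the $(n-d)$-windows of $v \setminus \diam$ at positions $\equiv j \pmod t$ cover $\set{0,\ldots,k-1}^{n-d}$ exactly once,'' which is precisely the definition of $v \setminus \diam$ being a $(k, n-d, t)$-perfect necklace. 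The main obstacle is verifying the identity $\pi(q + a^{n-d}) = \pi(q) + t$ and confirming that the image of $i \mapsto \pi(i + f_p)$ is a full residue class modulo $t$; both facts rest on careful handling of the $n$-periodic diamond pattern and of how non-diamond entries of $v$'s windows align with consecutive characters of $v \setminus \diam$.
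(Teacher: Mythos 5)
Your proposal is correct and follows essentially the same route as the paper: both rest on the base-$a$ decomposition (the paper phrases it as $w \bmod a = u^{(k^{n-d})}$), the observation that any two windows covering the same word must sit at positions a multiple of $a^{n-d}$ apart and hence share a frame, and the translation of windows of $v$ into consecutive length-$(n-d)$ blocks of $v\setminus\diam$ lying in a fixed residue class modulo $\frac{(n-d)a^{n-d}}{n}$. The only difference is presentational: you verify a bijection $\Psi_p$ for each $p$ directly, whereas the paper shortcuts via the count that $w$ has the length and frame of an upcycle and therefore need only avoid double coverage.
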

	\begin{proof}
		Note that $\frac{(n-d)a^{n-d}}{n}$ is the number of non-diamond characters in $u$, so $u^{(k^{n-d})}$ (and thus $v$) has $\frac{(n-d)a^{n-d}}{n}k^{n-d}$ non-diamond characters. Therefore $v\setminus \diam$ has the length of a $(k,n-d, \frac{(n-d)a^{n-d}}{n})$-perfect necklace.
		
		Let $w$ be the cyclic partial word $a \cdot v + u^{(k^{n-d})}$. Note $w$ has length $(ak)^{n-d}$, alphabet $\set{0,1,\ldots,ak-1}$, and $d$ diamonds in each $n$-window, i.e., $w$ has the length and the frame of an upcycle for $\set{0,1,\ldots,ak-1}^n$ with diamondicity $d$. Therefore $w$ is an upcycle for $\set{0,1,\ldots, ak-1}^n$ if and only if $w$ covers each word of $\set{0,1,\ldots, ak-1}^n$ at most once.
		
		Since $w \text{ mod } a$ is $u^{(k^{n-d})}$, if a word of length $n$ is covered more than once by $w$, then it is covered by windows at positions which are some multiple of $\abs{u} = a^{n-d}$ apart. These windows share the same frame, and therefore must be the same substring of $w$ if they are to cover the same word. So $w$ covers a word more than once if and only if $w$ (and therefore $v$) contains a repeated substring at positions which are a multiple of $a^{n-d}$ apart. This occurs exactly when $v \setminus \diam$ contains a repeated length $n-d$ word at positions which are a multiple of $\frac{(n-d)a^{n-d}}{n}$ apart. Hence $w$ is an upcycle if and only if $v\setminus \diam$ is a $(k, n-d, \frac{(n-d)a^{n-d}}{n})$-perfect necklace.
	\end{proof}
	
	\begin{corollary}\label{cor:alphamult}
		Let $u$ be an upcycle for $\set{0,1,\ldots,a-1}^n$ with diamondicity $d$. Then for every $k \geq 1$, we can construct from $u$ an upcycle for $\set{0,1,\ldots,ak-1}^n$ with diamondicity $d$.
	\end{corollary}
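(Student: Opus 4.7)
The plan is to apply the Alphabet Multiplier Theorem (Theorem \ref{thm: alphamult}) directly: the theorem reduces constructing an upcycle for $\{0,\ldots,ak-1\}^n$ from $u$ to exhibiting a cyclic partial word $v$ over $\{0,\ldots,k-1\}$ that has the same length and frame as $u^{(k^{n-d})}$ and whose non-diamond skeleton $v \setminus \diam$ is a $(k,\, n-d,\, t)$-perfect necklace, where $t := \frac{(n-d)a^{n-d}}{n}$. So essentially everything will follow once such a $v$ is produced.

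First I would record that $t$ is a positive integer: since $u$ exists and its diamonds are $n$-periodic, each of the $n$ residue classes modulo $n$ along $u$ (of which there are $a^{n-d}/n \cdot n = a^{n-d}$ total indices) receives the same number of diamonds, and the number of non-diamond characters in $u$ is exactly $\frac{(n-d)a^{n-d}}{n} = t$. Consequently, the number of non-diamond positions in $u^{(k^{n-d})}$ is $t\cdot k^{n-d}$, which is precisely the length of a $(k,\,n-d,\,t)$-perfect necklace.

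Next I would invoke Proposition \ref{prop:PerfNecksExist!} to obtain a $(k,\,n-d,\,t)$-perfect necklace $p$ of length $t \cdot k^{n-d}$. I then construct $v$ by starting from the frame of $u^{(k^{n-d})}$ (a cyclic binary word over $\{\bull,\diam\}$ of length $(ak)^{n-d}$ with exactly $t \cdot k^{n-d}$ bullets) and substituting the characters of $p$, in cyclic order, into the bullet positions while leaving the diamond positions as $\diam$. By construction $v$ has the same length and frame as $u^{(k^{n-d})}$, and $v\setminus \diam = p$ is the required perfect necklace.

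Applying Theorem \ref{thm: alphamult} to this $v$ then yields that $a\cdot v + u^{(k^{n-d})}$ is an upcycle for $\{0,\ldots,ak-1\}^n$, and since $u^{(k^{n-d})}$ still has $d$ diamonds per $n$-window (and $a \cdot v$ preserves those diamond positions), the resulting upcycle has diamondicity $d$, as desired. There is no real obstacle here beyond bookkeeping — the only thing one must be careful about is matching the count of non-diamond positions in $u^{(k^{n-d})}$ with the length of the perfect necklace supplied by Proposition \ref{prop:PerfNecksExist!}, which is the content of the first step.
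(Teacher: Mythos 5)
Your proposal is correct and follows essentially the same route as the paper: invoke Proposition~\ref{prop:PerfNecksExist!} to get a $(k, n-d, \frac{(n-d)a^{n-d}}{n})$-perfect necklace, pad it with diamonds to a cyclic partial word $v$ with the length and frame of $u^{(k^{n-d})}$, and apply Theorem~\ref{thm: alphamult}. The extra bookkeeping you include (matching the count of non-diamond positions to the necklace length) is already noted in the paper inside the proof of Theorem~\ref{thm: alphamult} itself, so nothing is missing.
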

	\begin{proof}
		By Proposition~\ref{prop:PerfNecksExist!}, there exists a perfect necklace for any parameters. Further, any $(k, n-d, \frac{(n-d)a^{n-d}}{n})$-perfect necklace can be padded with diamonds to a partial cycle $v$ of the same length and frame as $u^{(k^{n-d})}$. Thus by Theorem \ref{thm: alphamult}, $a \cdot v + u^{(k^{n-d})}$ is an upcycle for $\set{0,1,\ldots,ak-1}^n$ with diamondicity $d$.
	\end{proof}
	
	\begin{corollary}\label{cor:n=8&a_even}
		For every even integer $a \ge 2$, there is an upcycle for $\set{0,\ldots,a-1}^8$.
	\end{corollary}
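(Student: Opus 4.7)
The plan is to combine Theorem~\ref{n=8d=1} with Corollary~\ref{cor:alphamult} in a one-step reduction. Since $a$ is even, write $a = 2k$ for some positive integer $k$. Theorem~\ref{n=8d=1} supplies an upcycle $u$ for $\set{0,1}^8$ (any of the seven listed examples suffices). Each such $u$ has length $2^7$ and exactly one diamond per window, i.e.\ diamondicity $d = 1$ (which is also consistent with \cite[Theorem~4.11]{G18}).

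Now apply Corollary~\ref{cor:alphamult} to $u$ with multiplier $k$. The corollary produces an upcycle for $\set{0,1,\ldots,2k-1}^8 = \set{0,1,\ldots,a-1}^8$, with the same diamondicity $d = 1$, and this is exactly the object the statement asks for.

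There is no substantive obstacle. The construction $2\cdot v + u^{(k^{n-d})}$ of Theorem~\ref{thm: alphamult}, combined with the existence of the requisite $(k, n-d, \tfrac{(n-d)a^{n-d}}{n})$-perfect necklace from Proposition~\ref{prop:PerfNecksExist!}, carries out all the real work. The only ingredient that was not a priori available---a binary seed upcycle for $n = 8$---is precisely what Theorem~\ref{n=8d=1} provides. So the corollary follows by a direct invocation of the previous corollary, with no case analysis or calculation required.
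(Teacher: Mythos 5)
Your proof is correct and is exactly the paper's argument: the paper's entire proof of this corollary reads ``By Theorem~\ref{n=8d=1} and Corollary~\ref{cor:alphamult},'' which is precisely the reduction you carry out (take a binary $n=8$ upcycle with $d=1$ and apply the alphabet multiplier with $k=a/2$). No differences worth noting.
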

	
	\begin{proof}
		By Theorem \ref{n=8d=1} and Corollary \ref{cor:alphamult}.
	\end{proof}
	
	\begin{example}\label{ex: AlphMult}
		Consider the upcycle $u= (001\diam110\diam)$, with parameters $a=2$, $n=4$, and $d=1$. Take $k=2$.
		
		Then $n-d=3$ and $\frac{(n-d)a^{n-d}}{n} = 6$, so we need a partial cycle $v$ with the same frame as $u$ such that $v \setminus \diam$ is a $(2,3,6)$-perfect necklace. One such $v$ is
		\[
		v = (000 \diam 000 \diam 
		001 \diam 001 \diam 
		010 \diam 010 \diam 
		011 \diam 011 \diam 
		100 \diam 100 \diam 
		101 \diam 101 \diam 
		110 \diam 110 \diam 
		111 \diam 111 \diam ).
		\]
		Applying Theorem~\ref{thm: alphamult}, we get the following:
		\begin{align*}
			a \cdot v               &= (000 \diam 000 \diam 
			002 \diam 002 \diam 
			020 \diam 020 \diam 
			022 \diam 022 \diam 
			200 \diam 200 \diam 
			202 \diam 202 \diam 
			220 \diam 220 \diam 
			222 \diam 222 \diam) \\
			u^{k^{n-d}}             &= (001 \diam 110 \diam
			001 \diam 110 \diam
			001 \diam 110 \diam
			001 \diam 110 \diam
			001 \diam 110 \diam
			001 \diam 110 \diam
			001 \diam 110 \diam
			001 \diam 110 \diam) \\
			a \cdot v + u^{(k^{n-d})} &= (001 \diam 110 \diam
			003 \diam 112 \diam
			021 \diam 130 \diam
			023 \diam 132 \diam
			201 \diam 310 \diam
			203 \diam 312 \diam
			221 \diam 330 \diam
			223 \diam 332 \diam).
		\end{align*}
		Thus $w=a \cdot v + u^{(k^{n-d})}$ is an upcycle for $\set{0,1,2,3}^4$ with $d=1$.
	\end{example}
	
	Observe that the upcycle $w$ constructed in Example~\ref{ex: AlphMult} contains a substring $s$ for which $u=(s)$. This is due to the fact that the perfect necklace $v \setminus \diam$ contains a string of 6 consecutive zeros. Below we show that such perfect necklaces exist in general; thus we can use Theorem~\ref{thm: alphamult} to create new upcycles for larger alphabets that contain the original upcycle. 
	
	\begin{prop}\label{thm:StartingWith0sExists}
		For all $a,n,t \in \N$ there exists an $(a,n,t)$-perfect necklace which begins with $t$ zeroes.
	\end{prop}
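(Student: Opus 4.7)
The plan is to realize the required perfect necklace as the sequence of $n$-window labels of a specific Eulerian circuit in the astute graph $G := G(a,n-1,t)$, exploiting the correspondence underlying Proposition~\ref{prop:PerfNecksExist!}. The $t$ edges $(0^{n-1},s)\to(0^{n-1},s+1)$ for $s\in\{0,1,\ldots,t-1\}$ (second coordinate mod $t$) each carry the $n$-window label $0^n$, and together they form a closed directed trail $C$ in $G$. If the Euler tour begins by traversing all $t$ edges of $C$ in order, then the first $t$ edge labels are all $0^n$, which forces the first $t+n-1$ characters of the resulting necklace to equal $0$; in particular it starts with $t$ zeroes.

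It therefore suffices to show that $C$ can be extended to an Euler tour of $G$, equivalently that $H := G \setminus C$ has an Eulerian circuit based at $(0^{n-1},0)$. Since $C$ is a closed trail, removing it preserves balance of in- and out-degrees, and since $a\geq 2$, every vertex of $H$ retains in-degree and out-degree at least $a-1\geq 1$. The main task is therefore to prove that $H$ is strongly connected. For this I would use the identification of $G$ with the tensor product of $B(a,n-1)$ and the directed $t$-cycle: a path in $H$ from $(u_1,s_1)$ to $(u_2,s_2)$ corresponds to a path in $B(a,n-1)\setminus\{e_0\}$ (where $e_0$ is the self-loop at $0^{n-1}$) from $u_1$ to $u_2$ whose length is congruent to $s_2-s_1$ modulo $t$.

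This reduces the problem to showing that in $B(a,n-1)\setminus\{e_0\}$, paths of every sufficiently large length exist between any two vertices. A self-loop never contributes to reachability among distinct vertices, so $B(a,n-1)\setminus\{e_0\}$ remains strongly connected; moreover, since $a\geq 2$, the vertex $1^{n-1}$ still carries a self-loop there, and detouring through it $k$ times turns any $u_1$-to-$u_2$ path of length $\ell$ into one of length $\ell+k$, so every residue class modulo $t$ is attainable. The main obstacle is conceptual rather than computational: making the tensor-product reduction precise and handling the edge case $n=1$ (where $0^{n-1}$ is the empty word and $G$ consists of $t$ vertices joined by parallel edges, which remain strongly connected after removing $C$ precisely because $a-1\geq 1$). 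Once strong connectivity of $H$ is established, the standard Eulerian-circuit theorem produces an Euler tour of $H$ starting and ending at $(0^{n-1},0)$, which preceded by $C$ yields an Euler tour of $G$ whose associated perfect necklace starts with $0^t$.
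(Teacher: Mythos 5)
Your proposal is correct and follows essentially the same route as the paper's proof: traverse the closed trail through the $t$ vertices of the form $(0^{n-1},s)$ first, delete it, and splice in an Euler tour of the balanced remainder. The only difference is that you supply a detailed strong-connectivity argument (via the tensor-product structure and the self-loop at $1^{n-1}$) where the paper simply asserts weak connectivity, and the paper additionally notes the degenerate $a=1$ case in which the initial cycle is already the whole Euler tour.
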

	\begin{proof}
		An Euler tour of the astute graph $G(a,n-1,t)$ describes an $(a,n,t)$-perfect necklace by reading the labels of the edges in order. Recall that the astute graph $G(a,n-1,t)$ is a tensor product of the De Bruijn graph $B(a,n-1)$ with the $t$-cycle $C_t$. Construct an Euler tour of $G(a,n-1,t)$ by choosing, as the initial vertex, any of the $t$ vertices corresponding to $0^{n-1}$. Proceed once around the cycle, through every vertex corresponding to $0^{n-1}$ and back to the initial vertex.
		
		If $a=1$, then this cycle is an Euler tour, hence corresponds to a perfect necklace. Otherwise, remove the edges of this cycle. The remaining graph has $\text{in-degree} = \text{out-degree}$ for each vertex and is weakly connected, so has an Euler tour. Joining the cycle and an Euler tour of the rest of the graph yields an Euler tour of $G(a,n-1,t)$ which corresponds to a perfect necklace beginning with $t$ zeroes.
	\end{proof}
	
	\begin{corollary}\label{cor:onion}
		For an upcycle $u$ with parameters $(a,n,d)$ and an integer $k\geq 1$, there exists an upcycle with parameters $(ka,n,d)$ which contains the original upcycle.
	\end{corollary}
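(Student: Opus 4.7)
The plan is to invoke Theorem~\ref{thm: alphamult} with a carefully chosen cyclic partial word $v$ and then exploit Proposition~\ref{thm:StartingWith0sExists}. Set $t := \frac{(n-d)a^{n-d}}{n}$, which by the periodicity of diamonds equals the number of non-diamond characters in one copy of $u$. By Proposition~\ref{thm:StartingWith0sExists}, there exists a $(k,n-d,t)$-perfect necklace $p$ whose first $t$ characters are all $0$. Re-insert diamonds into $p$ to produce a cyclic partial word $v$ of the same length and frame as $u^{(k^{n-d})}$; this is possible because $p$ has length $t\cdot k^{n-d}$, which is exactly the number of non-diamond positions of $u^{(k^{n-d})}$.

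By Theorem~\ref{thm: alphamult}, $w := a\cdot v + u^{(k^{n-d})}$ is an upcycle for $\set{0,\ldots,ak-1}^n$ with diamondicity $d$, giving the required parameters $(ka,n,d)$. It remains to check that the first $|u|=a^{n-d}$ characters of $w$ equal $u$. The first $|u|$ characters of $u^{(k^{n-d})}$ are just $u$, and the first $|u|$ characters of $v$ contain exactly $t$ non-diamond positions — precisely the positions where $u$ has letters — so by the choice of $p$, all those entries are $0$. Adding $a\cdot v$ to $u^{(k^{n-d})}$ position-by-position (using $\diam+\diam=\diam$ and $0+u_i=u_i$) therefore leaves the first copy of $u$ untouched, so $u$ appears as a substring of $w$.

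There is essentially no obstacle beyond aligning the two notions of ``starts with $t$ zeros.'' The only point to double-check is that the $t$ leading zeros of the flat necklace $p=v\setminus\diam$ correspond, after diamond-padding to the frame of $u^{(k^{n-d})}$, to exactly the non-diamond positions of the first copy of $u$; this is automatic because diamond-padding preserves the order of the non-diamond entries and the first copy of $u$ contributes exactly $t$ non-diamond positions before any subsequent copy begins.
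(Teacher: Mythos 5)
Your proposal is correct and follows exactly the paper's own argument: take $v\setminus\diam$ to be a $(k,n-d,\frac{(n-d)a^{n-d}}{n})$-perfect necklace beginning with $\frac{(n-d)a^{n-d}}{n}$ zeroes (Proposition~\ref{thm:StartingWith0sExists}) and apply Theorem~\ref{thm: alphamult}. The only difference is that you spell out the verification that the leading zeroes land on the non-diamond positions of the first copy of $u$, which the paper leaves implicit.
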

	
	\begin{proof}
		Let $k \geq 1$. In Theorem \ref{thm: alphamult}, take $v \setminus \diam$ to be a $(k, n-d, \frac{(n-d)a^{n-d}}{n})$-perfect necklace beginning with $\frac{(n-d)a^{n-d}}{n}$ zeroes, which exists by Proposition \ref{thm:StartingWith0sExists}. The resulting upcycle contains $u$ as an initial substring.
	\end{proof}
	
	\begin{remark} Given an upcycle $u$ with parameters $(a,n,d)$ and an infinite sequence of positive integers $(k_0=1,k_1,\dots)$, repeated applications of Corollary \ref{cor:onion} allow us to construct a partial word of infinite length such that the initial substring of length $(a\prod_{i=0}^jk_i)^{n-d}$ is an upcycle for $\set{0,\ldots, a\prod_{i=0}^jk_i-1}^n$ with diamondicity $d$ for all $j\geq 0$, reminiscent of the onion De Bruijn cycles in \cite{SSW19}. This is in contrast to \cite{BC21} in which a De Bruijn cycle for alphabet size $a+1$ contains a De Bruijn cycle for alphabet size $a$ as a subsequence, but not consecutively, so that the new symbol appears frequently.
	\end{remark}

	Returning to Example~\ref{ex: AlphMult}, we also note the overall structure of $v \setminus \diam.$ In particular, observe that $v \setminus \diam$ is the concatenation of the words $w^2$ for $w \in \{0,1\}^3$ in lexicographic order. We will now show that for any upcycle, a similar perfect necklace can be used to construct $v$. In addition we show that this construction has a nice description when the original upcycle $u$ has parameters $(a,n,d)$ where $n \mid a^{n-d}.$

	\begin{theorem}\label{thm:divalphmult}
		Let $k \geq 1$. Suppose $u$ is a universal partial cycle for $\set{0,1,\ldots, a-1}^n$ with diamondicity $d$ such that $n \mid a^{n-d}$. Then $a \cdot v + u^{k^{n-d}}$ is a universal partial cycle for $\set{0, 1, \ldots, ak-1}$, where $v \setminus \diam$ is the concatenation of $w^\frac{a^{n-d}}{n}$ for the words $w \in \set{0,1,\ldots, k-1}^{n-d}$ in lexicographic order.
	\end{theorem}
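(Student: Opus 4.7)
The plan is to verify the hypothesis of the Alphabet Multiplier (Theorem~\ref{thm: alphamult}) for the particular $v$ described, namely that $v \setminus \diam$ is a $(k, n-d, \tfrac{(n-d)a^{n-d}}{n})$-perfect necklace. Set $m := n-d$ and $\ell := a^{n-d}/n$; the divisibility hypothesis $n \mid a^{n-d}$ ensures $\ell \in \N$, and the required necklace parameters can be rewritten as $(k, m, m\ell)$. The $v$ in the theorem is obtained from $u^{(k^{n-d})}$ by placing the proposed concatenation into the non-diamond positions; since $u$ contains $\tfrac{(n-d)a^{n-d}}{n} = m\ell$ non-diamond characters per copy, this concatenation has exactly the right total length to fill those positions and to give $v$ the same length and frame as $u^{(k^{n-d})}$.

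To show that the concatenation of $w^{\ell}$, for $w \in \set{0,\ldots,k-1}^{m}$ in lexicographic order, is a $(k, m, m\ell)$-perfect necklace, I would compose two results already established in the paper. First, Lemma~\ref{lemma:LexOrderPerfNeck} gives that the concatenation $W$ of all words of $\set{0,\ldots,k-1}^m$ in lexicographic order is a $(k, m, m)$-perfect necklace whose $p$th length-$m$ block $W[p]$ is precisely the $p$th lexicographic word of $\set{0,\ldots,k-1}^m$. Second, I would apply Proposition~\ref{prop: smallperfect} with $r = 0$, $q = \ell$, alphabet size $k$, and word length $m$: it promotes $W$ into a $(k, m, m\ell)$-perfect necklace whose $p$th block is $(W[p]_1 \cdots W[p]_m)^{\ell} = W[p]^{\ell}$, matching the description of $v \setminus \diam$ verbatim.

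Once this identification is made, Theorem~\ref{thm: alphamult} immediately produces the desired upcycle $a\cdot v + u^{(k^{n-d})}$ for $\set{0,\ldots,ak-1}^n$ of diamondicity $d$. The proof is essentially an assembly of existing machinery; the only bookkeeping worth checking carefully is that the parameters $m$ and $\ell$ line up between the two cited perfect-necklace results and the Alphabet Multiplier, which is a direct substitution, and that the divisibility hypothesis $n \mid a^{n-d}$ is exactly what is needed to make $\ell$ an integer so that $w^{\ell}$ is well-defined. I do not foresee a substantive obstacle beyond this parameter-matching.
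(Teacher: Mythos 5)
Your proposal is correct and follows the paper's own proof exactly: both apply Lemma~\ref{lemma:LexOrderPerfNeck} to get the $(k,n-d,n-d)$-perfect necklace from the lexicographic concatenation, then Proposition~\ref{prop: smallperfect} with $r=0$ and $q=a^{n-d}/n$ to boost it to a $(k,n-d,\tfrac{(n-d)a^{n-d}}{n})$-perfect necklace, and finally invoke Theorem~\ref{thm: alphamult}. The parameter bookkeeping you flag checks out, and your observation about the role of $n \mid a^{n-d}$ is accurate.
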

	
	\begin{proof}
		By Lemma \ref{lemma:LexOrderPerfNeck}, if we take $w[j]$ to be the $j$th word of $\set{0,1,\ldots, k-1}^{n-d}$ according to the lexicographic order, then $(w[1]\cdots w[k^{n-d}])$ is a $(k,n-d,n-d)$-perfect necklace. By Proposition \ref{prop: smallperfect}, the cyclic word $(w[1]^\frac{a^{n-d}}{n}\ldots w[k^{n-d}]^\frac{a^{n-d}}{n})$ is a $(k,n-d,\frac{(n-d)a^{n-d}}{n})$-perfect necklace. Now by taking $v\setminus \diam$ to be this $(k,n-d,\frac{(n-d)a^{n-d}}{n})$-perfect necklace, we may apply Theorem \ref{thm: alphamult} to obtain the result.
	\end{proof}
	
	We observe that $v$ in Example~\ref{ex: AlphMult} follows the construction described in Theorem~\ref{thm:divalphmult}. We also note the following.
	
	\begin{observation}
		Applying Theorem~\ref{thm:divalphmult} to the upcycle $(001\diam110\diam)$ for $\set{0,1}^4$ yields the construction of universal partial words for even size alphabets and $n=4$ given in \cite[Theorem~5.2]{G18}.
	\end{observation}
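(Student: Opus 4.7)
The plan is to carry out a direct unpacking of Theorem~\ref{thm:divalphmult} on the specified input and compare the result letter-by-letter with the construction of \cite[Theorem~5.2]{G18}. Begin by checking that the hypotheses apply: for $u = (001\diam110\diam)$ we have $a=2$, $n=4$, $d=1$, so $n-d = 3$ and $a^{n-d} = 8$, and the divisibility hypothesis $n \mid a^{n-d}$ (namely $4 \mid 8$) holds. With these parameters, Theorem~\ref{thm:divalphmult} produces, for each $k \ge 1$, an upcycle $2 \cdot v + u^{k^{3}}$ for $\set{0,1,\ldots,2k-1}^{4}$, where $v \setminus \diam$ is the concatenation of $w^{2}$ as $w$ ranges over $\set{0,1,\ldots,k-1}^{3}$ in lexicographic order, and $v$ has the same frame as $u^{k^{3}}$ (diamonds in positions $4$ and $8$ of each length-$8$ block).

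Next, I would compute the resulting cyclic partial word block by block. The cycle $u^{k^{3}}$ consists of $k^{3}$ copies of the length-$8$ block $001\diam110\diam$, and the frame forces the six non-diamond positions within each block to be $1,2,3,5,6,7$. Since each $w^{2}$ has length $6$, the $j$th outer block of $v$ (for $1 \le j \le k^{3}$) is exactly $w_j[1]\,w_j[2]\,w_j[3]\,\diam\,w_j[1]\,w_j[2]\,w_j[3]\,\diam$, where $w_{j}$ is the $j$th element of $\set{0,\ldots,k-1}^{3}$ in lexicographic order. Adding $2\cdot v$ to $u^{k^{3}}$ componentwise then yields the explicit $j$th block
\[
(2w_j[1])\,(2w_j[2])\,(2w_j[3]+1)\,\diam\,(2w_j[1]+1)\,(2w_j[2]+1)\,(2w_j[3])\,\diam.
\]

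Finally, I would compare this block pattern with the formula in \cite[Theorem~5.2]{G18}, which (for each even alphabet $\set{0,\ldots,2k-1}$) also indexes its construction by an outer word $w \in \set{0,\ldots,k-1}^{3}$ taken in lexicographic order and fills each length-$8$ block using a fixed six-letter template obtained from $(001\diam110\diam)$ by shifting the letters by multiples of $2$ according to $w$. A direct termwise comparison of the template with the block displayed above shows they agree in every position, and the outer orderings coincide, so the two cyclic partial words are identical. The conclusion then follows.

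The main obstacle is not conceptual but bookkeeping: one must align the conventions used in \cite[Theorem~5.2]{G18} with those of Theorem~\ref{thm:divalphmult} (in particular, the choice of which coordinate of $w$ goes into which non-diamond slot, and the starting index for the lexicographic ordering), so that the two blockwise templates truly coincide rather than merely being equivalent up to a relabeling. Once the conventions are matched, the verification is entirely mechanical, and no new combinatorial input is needed beyond what Theorem~\ref{thm:divalphmult} already supplies.
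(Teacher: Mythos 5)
Your proposal is correct and matches the paper's (implicit) reasoning: the paper states this as an observation with no written proof, treating it as exactly the routine block-by-block unpacking you carry out, and your computed $j$th block $(2w_j[1])(2w_j[2])(2w_j[3]+1)\diam(2w_j[1]+1)(2w_j[2]+1)(2w_j[3])\diam$ is consistent with Example~\ref{ex: AlphMult}, whose $k=2$ output reproduces the upcycle of Example~5.3 of \cite{G18}. The only part you rightly flag as remaining is the convention-matching against the exact formula of \cite[Theorem~5.2]{G18}, which is mechanical.
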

	
	The only known non-trivial upcycles 
	as of writing have $d=1$, hence satisfy the condition $n\mid a^{n-d}$. 
	Thus, Theorem~\ref{thm:divalphmult} can be applied to any currently known non-trivial upcycle. However, we can apply Theorem~\ref{thm: alphamult} to all upcycles regardless of whether $n \mid a^{n-d}$ (in particular, to all De Bruijn cycles). We give an example application to a De Bruijn cycle with $n \nmid a^n$ below.

	\begin{example} \label{ex: fixed}
		Consider the De Bruijn cycle $u=(11101000)$ with parameters $a=2,$ $n=3$, and $d=0$. Take $k=2$.
		
		Thus $n-d = 3$ and $\frac{(n-d)a^{n-d}}{n} = 8,$ so we need a partial cycle $v$ with the same frame as $u$ such that $v \setminus \diam = v$ is a $(2,3,8)$-perfect necklace. An example of such a perfect necklace $v$ is given by 
		
		\[
		v= (11111111 ~ 01101101 ~ 00100100 ~ 00000000 ~ 10010010 ~ 01001001 ~ 10110110 ~ 11011011).
		\]
		
		(The spaces above are provided for ease of reading.)
		
		Applying Theorem~\ref{thm: alphamult}, we get the following:
		\begin{align*}
			a \cdot v               &= (22222222 ~ 02202202 ~ 00200200 ~ 00000000 ~ 20020020 ~ 02002002 ~ 20220220 ~ 22022022) \\
			u^{k^{n-d}}             &= (11101000 ~ 11101000 ~ 11101000 ~ 11101000 ~ 11101000 ~ 11101000 ~ 11101000 ~ 11101000) \\
			a \cdot v + u^{k^{n-d}} &= (33323222 ~ 13303202 ~ 11301200 ~ 11101000 ~ 31121020 ~ 13103002 ~ 31321220 ~ 33123022).
		\end{align*}
		
		Then $w=a \cdot v + u^{k^{n-d}}$ is a De Bruijn cycle with parameters $a=4$, $n=3$, and $d=0$. 
	\end{example}

	\section{Lifts and Folds of Upcycles}\label{sec:lift}
	\subsection{How to lift an upcycle}\label{subsec:lift&fold}

	Given an upcycle with diamondicity $d$, in this section we consider closely related upcycles with diamondicity $d' < d$. We focus on the following definition.
	
	\begin{definition}\label{def:fold/lift}
		Let $u$ and $w$ be upcycles for $\A^n$ having diamondicities $d$ and $d'$, respectively, where $d' < d$. We say that $w$ is a \emph{lift} of $u$ (or that $u$ \emph{lifts} to $w$) if $w$ is covered by $u^{(a^\delta)}$, where $\delta := d - d'$. In this situation, we also say that $u$ is a \emph{fold} of $w$ (or that $w$ \emph{folds} to $u$).
		
		If $w$ is a De Bruijn sequence, then we say that $w$ is a \emph{De Bruijn lift} of $u$.
	\end{definition}
	
	Lifts are transitive: if $w$ is a lift of $v$ and $v$ is a lift of $u$, then $w$ is a lift of $u$.
	
	\begin{example}\label{ex:first_folds}
		Let $\A = \{0,1\}$ and $n=4$. Then $u=(001\diam 110 \diam)$ is an upcycle for $\A^n$ with $d=1$. Both $w= (0010110000111101)$ and $x=(0010111101001100)$ are upcycles with $d=0$ (i.e., De Bruijn cycles) for $\A^n$. It is straightforward to verify that $w$ is a lift of $u$ while $x$ is not a lift of $u$.
	\end{example}
	
	If $u$ and $w$ are upcycles with diamondicities $d$ and $d'$ respectively, and $w$ is a lift of $u$, then by definition $w$ is $u^{(a^{d-d'})}$ with some $n$-periodic set of diamonds in $u$ replaced by a cyclic word $v$. This suggests a method for producing lifts of $u$. 
	In Theorem \ref{thm:necklacelift} we state this method and show that all lifts of upcycles are generated using perfect necklaces in this way (see also Remark \ref{rem:liftchar}).
	
	\begin{theorem}[Characterization of lifts]\label{thm:necklacelift}
		Let $u$ be an upcycle for $\A^n$ with diamondicity $d$. Select an $n$-periodic subset of the diamonds in $u$, and let $\delta$ be the number of selected diamonds per window. Let $w$ be the cyclic partial word obtained by taking the cyclic partial word $u^{(a^{\delta})}$ and replacing all the selected diamonds (and their copies) with a cyclic word $v$. Then $w$ is an upcycle if and only if $v$ is an $(a, \delta, \frac{\delta a^{n-d}}{n})$-perfect necklace. If so, then $w$ is a lift of $u$ with diamondicity $d - \delta$.
	\end{theorem}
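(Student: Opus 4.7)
The plan is to show that $w$ has the correct length and frame to be a candidate upcycle for $\A^n$ with diamondicity $d-\delta$, and then to characterize, in terms of $v$, exactly when $w$ covers each word of $\A^n$ exactly once. First I would verify that $u^{(a^{\delta})}$ has length $a^{n-d}\cdot a^{\delta}=a^{n-(d-\delta)}$ and that the selected diamonds inside it form an $n$-periodic set with $\delta$ per window. Setting $t:=\delta a^{n-d}/n$ (an integer, since it counts the selected diamonds in $u$), the total number of selected diamond positions in $u^{(a^{\delta})}$ is $L:=ta^{\delta}$, which matches the length of any $(a,\delta,t)$-perfect necklace. Replacing these positions cyclically with the characters of $v$ therefore yields a cyclic partial word $w$ of the correct length with $d-\delta$ diamonds per window. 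Since $|w|\cdot a^{d-\delta}=a^n$, $w$ is an upcycle if and only if no two distinct windows of $w$ cover the same word of $\A^n$.

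Next I would analyze pairs of window positions $i,j$ by reducing modulo $|u|=a^{n-d}$. If $i\not\equiv j\pmod{|u|}$, the two $w$-windows agree in frame and project to distinct windows of the upcycle $u$, which cover disjoint subsets of $\A^n$; since every word covered by a window of $w$ is also covered by the corresponding window of $u$, the $w$-windows cover disjoint subsets as well, and no collision is possible. If instead $i\equiv j\pmod{|u|}$, write $j=i+m|u|$ for some $m\in\{1,\ldots,a^{\delta}-1\}$; then the two windows agree on every position inherited from $u$, so a collision between them is controlled entirely by the characters that came from $v$.

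To make the latter precise, I would enumerate the selected diamond positions in $u^{(a^{\delta})}$ cyclically and let $f(i)$ denote the index of the first selected diamond inside the window at position $i$. Because the $\delta$ selected diamonds of a single window are consecutive in this enumeration, the characters of $v$ substituted into that window are exactly the cyclic $\delta$-window of $v$ at position $f(i)$. Shifting $i$ by $|u|$ passes exactly $t$ selected diamonds, so $f(i+m|u|)\equiv f(i)+mt\pmod{L}$. Consequently, the windows at $i$ and $j=i+m|u|$ cover the same word of $\A^n$ if and only if the cyclic $\delta$-windows of $v$ at positions $f(i)$ and $f(i)+mt$ coincide.

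Finally, noting that $f$ is surjective onto $\{0,1,\ldots,L-1\}$, the no-collision condition becomes: for each residue $s\in\{0,1,\ldots,t-1\}$, the $a^{\delta}$ cyclic $\delta$-windows of $v$ at positions $s,s+t,s+2t,\ldots,s+(a^{\delta}-1)t$ are pairwise distinct. Since there are exactly $a^{\delta}=|\A^{\delta}|$ such windows, pairwise distinctness is equivalent to their being all of $\A^{\delta}$, which is precisely the definition of an $(a,\delta,t)$-perfect necklace. The last assertion of the theorem then follows for free: when this holds, $w$ is by construction covered by $u^{(a^{\delta})}$ and has diamondicity $d-\delta$, hence is a lift of $u$. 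The main obstacle I anticipate is the bookkeeping around the map $f$ — specifically, carefully verifying that shifting $i$ by $|u|$ advances $f(i)$ by exactly $t$ independently of $i$, so that the translated collision condition lines up cleanly with the perfect-necklace definition.
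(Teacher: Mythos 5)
Your proposal is correct and follows essentially the same route as the paper's proof: reduce to showing no word is covered twice, observe that collisions can only occur between windows whose positions differ by a multiple of $|u|=a^{n-d}$, and translate the resulting condition on repeated length-$\delta$ substrings of $v$ at positions a multiple of $t=\frac{\delta a^{n-d}}{n}$ apart into the definition of an $(a,\delta,t)$-perfect necklace. Your explicit bookkeeping via the map $f$ simply makes precise the step the paper states more tersely, so there is nothing further to add.
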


	\begin{proof}
		Note that $\frac{\delta a^{n-d}}{n}$ is the number of selected diamonds in $u$, so $v$ has length $\frac{\delta a^{n-d}}{n} a^{\delta}$, the length of an $(a,\delta, \frac{\delta a^{n-d}}{n})$-perfect necklace.
		
		Let $w$ be as in the theorem. Since $w$ has length $a^{n-d+\delta}$ with $d-\delta$ diamonds per window, it covers $a^{n-d+\delta} \cdot a^{d-\delta} = a^n$ words of length $n$ with multiplicity. It is sufficient to show that $w$ covers each word of $\A^n$ at most once if and only if $v$ is a $(a,\delta, \frac{\delta a^{n-d}}{n})$-perfect necklace.
		
		Since $w$ is covered by $u^{(a^\delta)}$, and $u$ is an upcycle, if a word of $\A^n$ is covered more than once by $w$, then it is covered by windows at positions which are a multiple of $a^{n-d}$ apart. These windows correspond to two copies of the same window of $u$ with the same diamonds removed, so share the same frame, and therefore must be identical substrings of $w$ if they are to cover the same word. The corresponding substrings of $w$ of length $n$ can only differ by words of length $\delta$ that appear in $v$ at positions which are a multiple of $\frac{\delta a^{n-d}}{n}$ apart. So $w$ covers a word more than once if and only if $v$ contains a repeated word of length $\delta$ at positions which are a multiple of $\frac{\delta a^{n-d}}{n}$ apart. Hence $w$ is an upcycle if and only if $v$ is an $(a, \delta, \frac{\delta a^{n-d}}{n})$-perfect necklace.
	\end{proof}
	
	\begin{remark}\label{rem:liftchar}
		Here, we argue why Theorem \ref{thm:necklacelift} is indeed a characterization of all lifts; namely, we show that an $n$-periodic set of diamonds chosen from $u^{(a^\delta)}$ corresponds to $a^\delta$ copies of an $n$-periodic set of diamonds from $u$.
		Assume that an $n$-periodic set of diamonds, with $\delta$ diamonds in each window, have been chosen from $u^{(a^\delta)}$, and, to focus on the periodicity of these diamonds, consider the cyclic word $w$ over $\{\bull,\diam\}$ obtained by replacing each diamond in the frame of $u^{(a^\delta)}$ that was not chosen with $\bull$. The frame period (see Definition \ref{def:nonexistence}) $m$ of $w$ divides $\gcd(n,a^{n-d'})$ since $w$ is both $n$-periodic and $a^{n-d'}$-periodic with $d' := d-\delta$ (or by \cite[Theorem 4.11]{G18}). If $\gcd(n,a^{n-d'})>\gcd(n,a^{n-d})$, then in particular there is a prime power $p^m$ dividing $n$ where $m> n-d$. By Theorem 4.7 in \cite{G18}, we have $d< n-\sqrt{n-\frac{7}{4}} - \frac{1}{2}$ and hence $m > \left\lfloor  \sqrt{n-\frac{7}{4}} + \frac{1}{2} +1 \right\rfloor$. However, $\left\lfloor  \sqrt{n-\frac{7}{4}} + \frac{1}{2} +1 \right\rfloor > \log_2(n)$ for all $n\geq 1$, so $p^m > n$, contradicting that $p^m$ divides $n$. Thus we indeed have $\gcd(n,a^{n-d'})=\gcd(n,a^{n-d})$. 
		Let $k=a^{n-d}/m$, which is an integer because $m$ divides $\gcd(n,a^{n-d'})=\gcd(n,a^{n-d})$. Then $w=(w_1\cdots w_m)^{a^{n-d'}/m}=((w_1\cdots w_m)^k)^{a^{\delta}}$. Note $(w_1\cdots w_m)^k$ and $u$ have the same length; the former indicates the chosen diamonds and is a power of a word of length $m$, so is $m$-periodic and therefore $n$-periodic as a cyclic word since $m$ divides $n$. 
		Thus the $n$-periodic set of diamonds chosen from $u^{(a^\delta)}$ is obtained by first choosing an $n$-periodic set of diamonds from $u$ and taking all copies of these chosen diamonds in $u^{(a^\delta)}$. 
		Therefore, by Theorem \ref{thm:necklacelift}, every lift of $u$ with diamondicity $d - \delta$ can be constructed from a $(a, \delta, \frac{\delta a^{n-d}}{n})$-perfect necklace.
	\end{remark}
	
	\begin{corollary}\label{cor:lift_to_DB}
		Every upcycle lifts to a De Bruijn cycle.
	\end{corollary}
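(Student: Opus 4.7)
The plan is that this corollary is an immediate application of Theorem~\ref{thm:necklacelift} combined with the existence of perfect necklaces guaranteed by Proposition~\ref{prop:PerfNecksExist!}. Given an upcycle $u$ for $\A^n$ with diamondicity $d$, I would select the $n$-periodic set of diamonds in the statement of Theorem~\ref{thm:necklacelift} to be \emph{all} diamonds of $u$; this is $n$-periodic by \cite[Lemma~14]{CKMS17}, so the parameter $\delta$ in the theorem becomes $\delta = d$. If the hypotheses of the theorem can be met, the resulting lift $w$ of $u$ will have diamondicity $d - \delta = 0$, i.e., will be a De Bruijn cycle, which is exactly the conclusion we want.

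To apply Theorem~\ref{thm:necklacelift}, I need to produce an $(a, d, t)$-perfect necklace $v$, where $t = \tfrac{d\, a^{n-d}}{n}$. The trivial case $d = 0$ needs no argument since $u$ is itself a De Bruijn cycle, so I would assume $d \geq 1$. With $\delta = d$, the number $t$ equals the total number of diamond characters in $u$, which (by the discussion of diamondicity in the Preliminaries) is a positive integer. Proposition~\ref{prop:PerfNecksExist!} then immediately supplies an $(a,d,t)$-perfect necklace $v$, and plugging this $v$ into Theorem~\ref{thm:necklacelift} yields the desired De Bruijn lift of $u$.

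The main obstacle is essentially absent here: all the technical work has already been packaged into Theorem~\ref{thm:necklacelift} and Proposition~\ref{prop:PerfNecksExist!}. The only concrete check is verifying that the parameter $t$ is a positive integer for $d \geq 1$, which is immediate from the fact that $u$ is an upcycle with at least one diamond.
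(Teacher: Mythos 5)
Your proposal is correct and matches the paper's proof exactly: the paper also applies Theorem~\ref{thm:necklacelift} with $\delta = d$ (i.e., selecting all diamonds) and invokes Proposition~\ref{prop:PerfNecksExist!} to supply the required $(a, d, \tfrac{d a^{n-d}}{n})$-perfect necklace. The extra checks you note (the trivial $d=0$ case and integrality of $t$) are fine but not needed beyond what the paper already establishes in the Preliminaries.
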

	
	\begin{proof}
		Apply Theorem \ref{thm:necklacelift} with $\delta = d$, as $(a, d, \frac{d a^{n-d}}{n})$-perfect necklaces exist by Proposition~\ref{prop:PerfNecksExist!}.
	\end{proof}
	
	\begin{remark}
		Whether there exist upcycles with diamondicity $d > 1$ is still an open problem, but if such an upcycle $u$ exists and if $n\nmid a^{n-d}$, then any $n$-periodic subset of diamonds in $u$ will have $\delta>1$. That is, any lift of $u$ must have diamondicity $d'$ with $d'<d-1$.
		
		In the case where $d = 1$, producing a lift requires only an $(a,1, \frac{a^{n-1}}{n})$-perfect necklace. When constructing a lift of such an upcycle $u$, the diamonds can be filled in greedily, requiring only that each copy in $u^a$ of a given diamond from $u$ be replaced with a distinct character.
	\end{remark}
	
	\begin{example}\label{ex: lifts}
		The upcycle $u = (001\diam110\diam)$ for $\set{0,1}^4$ has $d=1$. Taking $\delta = 1$, we have
		\begin{align*}
			u^{\parens{a^\delta}} = u^2 = (&001\diam110\diam 001\diam110\diam)
		\end{align*}
		and $\frac{\delta a^{n-d}}{n} = \frac{2^3}{4}=2.$ Therefore we need a $(2,1,2)$-perfect necklace to fill in the selected diamonds in the above word. 
		Table~\ref{fig:lift_exs} shows all $(2,1,2)$-perfect necklaces and their resulting lifts of $u$.
		
		\begin{table}[H]
			\begin{center}
				\begin{tabular}{c|c}
					Perfect Necklace & Lift of $u$ \\
					\hline
					$(00 ~ 11)$      & $(001\,\mathbf{0}\,110\,\mathbf{0}\,001\,\mathbf{1}\,110\,\mathbf{1})$ \\
					$(01 ~ 10)$      & $(001\,\mathbf{0}\,110\,\mathbf{1}\,001\,\mathbf{1}\,110\,\mathbf{0})$ \\
					$(10 ~ 01)$      & $(001\,\mathbf{1}\,110\,\mathbf{0}\,001\,\mathbf{0}\,110\,\mathbf{1})$ \\
					$(11 ~ 00)$      & $(001\,\mathbf{1}\,110\,\mathbf{1}\,001\,\mathbf{0}\,110\,\mathbf{0})$
				\end{tabular}
			\end{center}
			\caption{Each $(2,1,2)$-perfect necklace $v$ corresponds to a lift of $u=(001\diam110\diam)$ by replacing the diamonds in $u^2=(001\diam110\diam001\diam110\diam)$ with the characters of $v$.}
			\label{fig:lift_exs}
		\end{table}
		
		Observe that the first and fourth lifts in Table~\ref{fig:lift_exs} are the same cycle, and the second and third are the same. Thus there are two De Bruijn cycles that fold to $u$. Corresponding graphical representations of these cycles are shown in Figures~\ref{fig: circle1 lift1} and \ref{fig: circle1 lift2} in Section~\ref{subsec:Hamilton} as well as in Figure \ref{fig:tour1twolifts} in Section \ref{subsec:Euler}.
	\end{example}

	\subsection{Upcycles as generalized Hamilton cycles}\label{subsec:Hamilton}
	
	De Bruijn cycles for $\A^n$ can be viewed as Hamilton cycles of the De Bruijn graph $B(a,n)$. The Hamilton cycle is the spanning subgraph of the De Bruijn graph indicating the order in which the De Bruijn cycle covers the words of $\A^n$, with an edge from $x$ to $y$ if and only if $x$ and $y$ are covered consecutively in the De Bruijn cycle.
	
	We now define the analogous graph substructure representing an upcycle.
	
	\begin{definition}
		Given an upcycle $u$ for $\A^n$, let $S(u)$ be the spanning subgraph of the De Bruijn graph for $\A^n$ 
		having an edge from $x$ to $y$ if and only if $x\to y$ is an edge of the De Bruijn graph and the words $x$ and $y$ are covered consecutively in $u$. The edges of $S(u)$ are labeled with the letter that is appended in going from $x$ to $y$.
	\end{definition} 
	
	In Figures \ref{fig: circle1} and \ref{fig: circle2}, we show the graphs $S(u)$ for the upcycles $u=(001\diam110\diam)$ and \[u = (001\diam110\diam003\diam112\diam021\diam130\diam023\diam132\diam201\diam310\diam203\diam312\diam221\diam330\diam223\diam332\diam),\] with edge colors indicating edge labels and the characters of $u$ (equivalent to edge labels for the letters) written around the outside.
	
	\begin{figure}[H]
		\begin{center}
			\begin{tikzpicture}
				
				\foreach \x [count=\i] in {0,0,1,$\diam$,1,1,0,$\diam$}
				{
					{\foreach \y in {1.5,3}
						{\color{\x} \draw [ultra thick] (112.5-\i*45:\y) arc (112.5-\i*45:67.5-\i*45:\y);}
						{\draw [ultra thick] (90-\i*45:3.4) node {\Large{\x}};}
					}
				}
				
				\foreach \x in {1,2}
				{
					\color{0}
					{\draw [ultra thick] (112.5+\x*180:1.5) arc (112.5+\x*180:67.5+\x*180:1.5);}
					{\draw [ultra thick] (112.5+\x*180:3) -- (67.5+\x*180:1.5);}
					
					\color{1}
					{\draw [ultra thick] (112.5+\x*180:1.5) -- (67.5+\x*180:3);};
					{\draw [ultra thick] (112.5+\x*180:3) arc (112.5+\x*180:67.5+\x*180:3);}
				};
				
				\foreach \x in {1,...,8}
				{
					{\node[shape=circle,fill=black, scale=0.5] at (-22.5+\x*45:1.5) {};}
					{\node[shape=circle,fill=black, scale=0.5] at (-22.5+\x*45:3) {};}
				};
				
				\foreach \x [count=\i] in {0001,0010,0101,1011,0110,1100,1000,0000}
				{\draw (-22.5-\i*45:1.91) node {\footnotesize \x};}
				
				\foreach \x [count=\i] in {1001,0011,0111,1111,1110,1101,1010,0100}
				{\draw (-22.5-\i*45:3.41) node {\footnotesize \x};}
				
				\directionarrow
				
			\end{tikzpicture}\\
			\caption{The graph $S(u)$ for $u=(001\diam110\diam).$ Here $a=2$ and $n=4$. All edges are oriented clockwise. 
			}
			\label{fig: circle1}
		\end{center}
	\end{figure}
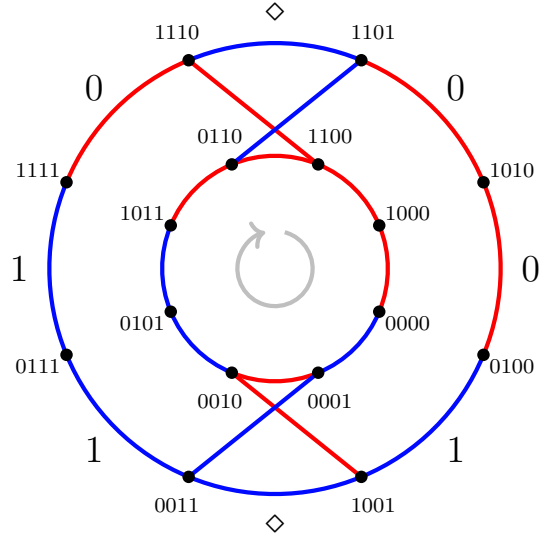
	
	\begin{figure}[H]
		\begin{center}
			\begin{tikzpicture}
				
				\foreach \x [count=\i] in {0,0,1,$\diam$,1,1,0,$\diam$,0,0,3,$\diam$,1,1,2,$\diam$,0,2,1,$\diam$,1,3,0,$\diam$,0,2,3,$\diam$,1,3,2,$\diam$,2,0,1,$\diam$,3,1,0,$\diam$,2,0,3,$\diam$,3,1,2,$\diam$,2,2,1,$\diam$,3,3,0,$\diam$,2,2,3,$\diam$,3,3,2,$\diam$}
				{
					{\foreach \y in {4,5,6,7}
						{\color{\x} \draw [ultra thick] (92.8125-\i*5.625:\y) arc (92.8125-\i*5.625:87.1875-\i*5.625:\y);}
						
						{\draw [ultra thick] (90-\i*5.625:7.4) node {\x};}}
				}
				
				\foreach \x in {1,...,16}
				{
					\color{0}
					{\draw [thick] (92.8125+\x*22.5:4) arc (92.8125+\x*22.5:87.1875+\x*22.5:4);}
					{\draw [thick] (92.8125+\x*22.5:5) -- (87.1875+\x*22.5:4);};
					{\draw [thick] (92.8125+\x*22.5:6) -- (87.1875+\x*22.5:4);};
					{\draw [thick] (92.8125+\x*22.5:7) -- (87.1875+\x*22.5:4);};
					
					\color{1}
					{\draw [thick] (92.8125+\x*22.5:4) -- (87.1875+\x*22.5:5);};
					{\draw [thick] (92.8125+\x*22.5:5) arc (92.8125+\x*22.5:87.1875+\x*22.5:5);}
					{\draw [thick] (92.8125+\x*22.5:6) -- (87.1875+\x*22.5:5);};
					{\draw [thick] (92.8125+\x*22.5:7) -- (87.1875+\x*22.5:5);};
					
					\color{2}
					{\draw [thick] (92.8125+\x*22.5:4) -- (87.1875+\x*22.5:6);};
					{\draw [thick] (92.8125+\x*22.5:5) -- (87.1875+\x*22.5:6);};
					{\draw [thick] (92.8125+\x*22.5:6) arc (92.8125+\x*22.5:87.1875+\x*22.5:6);}
					{\draw [thick] (92.8125+\x*22.5:7) -- (87.1875+\x*22.5:6);};
					
					\color{3}
					{\draw [thick] (92.8125+\x*22.5:4) -- (87.1875+\x*22.5:7);};
					{\draw [thick] (92.8125+\x*22.5:5) -- (87.1875+\x*22.5:7);};
					{\draw [thick] (92.8125+\x*22.5:6) -- (87.1875+\x*22.5:7);};
					{\draw [thick] (92.8125+\x*22.5:7) arc (92.8125+\x*22.5:87.1875+\x*22.5:7);}
				};
				
				\foreach \x in {1,...,64}
				{
					\foreach \y in {4,5,6,7}
					{\node[shape=circle,fill=black, scale=0.4] at (2.8125+\x*5.625:\y) {};}
				}
				
				\directionarrow
				
			\end{tikzpicture}\\
			\caption{The graph $S(u)$ for $u=(001\diam110\diam003\diam112\diam021\diam130\diam023\diam132\diam201\diam310\diam203\diam312\diam221\diam330\diam223\diam332\diam).$ Here $a=4$ and $n=4$. All edges are oriented clockwise.
			}
			\label{fig: circle2}
		\end{center}
	\end{figure}
	
	Each upcycle $u$ for $\A^n$ defines an equivalence relation on $\A^n$ in which two words are equivalent if and only if they are covered by the same window of $u$. In Figures \ref{fig: circle1} and \ref{fig: circle2}, the equivalence classes correspond to vertices lying along radial spokes of the concentric circles. The edges lying along radial spokes of the concentric circles then also have the same labels, except where they correspond to diamonds in the upcycle.
	
	Let $u$ be any upcycle. For each word $w \in \A^n$, let $\overline{w}$ be the equivalence class containing $w$. Note $\overline{w}$ can be interpreted as the window of $u$ covering $w$. Since a diamond can represent any letter of $\A$, in the digraph $S(u)$, a word $w \in \A^n$ has 
	\[\deg^+(w) = \begin{cases} a & \text{if }\overline{w} \text{ starts with } \diam \\
		1 & \text{if } \overline{w} \text{ starts with a letter}\end{cases}\quad\text{and}\quad \deg^-(w) = \begin{cases} a & \text{if }\overline{w} \text{ ends with } \diam \\
		1 & \text{if } \overline{w} \text{ ends with a letter.}\end{cases}
	\]
	
	The following proposition presents the relationship between an upcycle $u$ and a lift of $u$ in the graph setting.
	
	\begin{prop}\label{prop:liftequivalence}
		Let $u$ and $w$ be upcycles for $\A^n$. Then $w$ is a lift of $u$ if and only if $S(w)$ is a (labeled) subgraph of $S(u)$. 
	\end{prop}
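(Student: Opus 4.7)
The plan is to prove the two directions separately, each centered on the correspondence between consecutive windows of $u$ and the edges of $S(u)$. For the forward direction, if $w$ is a lift of $u$ then every window of $w$ is a filling of the corresponding window of $u^{(a^\delta)}$, which in turn is (via the reduction $i\mapsto i\bmod a^{n-d}$) a window of $u$. Hence any edge $x\to y$ in $S(w)$, which arises from consecutive windows of $w$ covering $x$ and $y$, also comes from consecutive windows of $u$ covering $x$ and $y$, so $x\to y$ is an edge of $S(u)$ carrying the same label $y_n$.

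For the converse, assume $S(w)\subseteq S(u)$ as labeled subgraphs and set $N=|w|=a^{n-d'}$. First I would fix an arbitrary total-word filling $w^\star$ of $w$ and consider the cyclic sequence of its consecutive $n$-windows $x_1^\star,\dots,x_N^\star$, which is a closed walk in $S(w)$ and hence in $S(u)$. Since every edge of $S(u)$ steps from an equivalence class $C_j$ (the set of words covered by the $j$-th window of $u$) to the next class $C_{j+1}$ modulo $a^{n-d}$, and the classes partition $\A^n$, the indices $j_i$ defined by $x_i^\star\in C_{j_i}$ satisfy $j_{i+1}\equiv j_i+1\pmod{a^{n-d}}$; closing the walk forces $a^{n-d}\mid N$, so $\delta:=d-d'\ge 0$. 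After cyclically shifting $u$ so that $j_1=1$, every word of $C_{i\bmod a^{n-d}}$ begins with $u_{i\bmod a^{n-d}}=u^{(a^\delta)}_i$ whenever that entry is a letter, so $w_i^\star=u^{(a^\delta)}_i$ at every letter position of $u^{(a^\delta)}$.

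If $w_i$ is itself a letter then $w_i=w_i^\star=u^{(a^\delta)}_i$, giving the required agreement. If instead $w_i=\diam$, I would compare two fillings $w^{\star 1}$ and $w^{\star 2}$ of $w$ that agree everywhere except at position $i$: the windows not containing position $i$ produce identical walks, forcing a common $j_1$ and hence a common $j_i$, so $x_i^{\star 1},x_i^{\star 2}\in C_{j_i}$ begin with distinct first letters, which is incompatible with $u^{(a^\delta)}_i$ being a letter. Thus every diamond of $w$ sits above a diamond of $u^{(a^\delta)}$, $u^{(a^\delta)}$ covers $w$, and $w$ is a lift of $u$. The main obstacle will be this last step: pinning down the diamond alignment requires both the two-filling comparison and the fact that $|w|>n$, guaranteed by Proposition~\ref{prop:upcycle_length} (so that positions outside the $n$ windows containing any chosen $i$ exist) in order to extract the contradiction.
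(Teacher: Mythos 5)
Your proof is correct, and while the forward direction ($w$ a lift of $u$ implies $S(w)\subseteq S(u)$) coincides with the paper's, your argument for the harder converse takes a genuinely different route. The paper first characterizes ``covered by the same window of $u$'' via the existence of length-$n$ paths in $S(u)$ from a common vertex, deduces that the window-equivalence relation of $w$ refines that of $u$, and then shows consecutive windows of $w$ sit over consecutive windows of $u$, from which the covering of $w$ by $u^{(a^\delta)}$ follows. You instead pick a total filling $w^\star$ of $w$, observe that its window sequence is a closed walk in $S(u)$ whose class index advances by exactly one at each step (so $a^{n-d}$ divides $|w|$ and $\delta\ge 0$), align the rotation so that letters of $u^{(a^\delta)}$ force the corresponding letters of $w^\star$, and then handle the diamond positions of $w$ by perturbing the filling at a single position and noting that the two resulting windows land in the same class, which is impossible over a letter of $u$. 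Both arguments are sound; the paper's is more compact and avoids fillings entirely, while yours makes explicit two things the paper leaves implicit --- the precise index alignment between windows of $w$ and of $u^{(a^\delta)}$, and the step from ``every total word covered by a window of $w$ is covered by the corresponding window of $u$'' to the character-by-character covering of partial words --- and correctly identifies Proposition~\ref{prop:upcycle_length} as the fact guaranteeing a window avoiding the perturbed position. The only caveat, shared with the paper's own proof, is that degenerate cases (e.g.\ $\delta=0$ or the all-diamond upcycle) are glossed over relative to the strict inequality $d'<d$ in the definition of a lift.
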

	
	\begin{proof}
		Suppose $S(w)$ is a subgraph of $S(u)$. Note that two total words are covered by the same window of an upcycle $u$ if and only if there exists an $n$-length path in $S(u)$ from some total word $v$ to each of them. Consequently, if two total words are covered by the same window in $w$, then since $S(w)$ is a subgraph of $S(u)$, they must be covered by the same window in $u$. Thus, the equivalence relation defined by $w$ is a refinement of the equivalence relation defined by $u$. By associating the equivalence classes with windows, we see that every $n$-window of $w$ is covered by a unique $n$-window of $u$.
		
		Let $w_1$ and $w_2$ be consecutive windows of $w$. Then there exist total words $v_1$ and $v_2$ covered by $w_1$ and $w_2$ such that $(v_1,v_2)$ is an edge in $S(w)$. Let $u_1$ and $u_2$ be the unique windows of $u$ which cover $w_1$ and $w_2$, respectively. Since $S(w)$ is a subgraph of $S(u)$, $(v_1,v_2)$ is also an edge in $S(u)$, so $u_1$ and $u_2$ must be consecutive windows of $u$. Thus, every pair of consecutive windows of $w$ are covered by consecutive windows of $u$, so $w$ is covered by $u^{(a^\delta)}$ (both of which have length $a^{n-d(w)}$).
		
		Conversely, suppose $u^{(a^\delta)}$ covers $w$. If $v_1, v_2 \in \A^n$ are covered by consecutive windows of $w$, then they are covered by consecutive windows of $u^{(a^\delta)}$ (and therefore of $u$). So $S(w)$ is a subgraph of $S(u)$.
	\end{proof}
	
	\begin{corollary}
		Let $u$ be an upcycle. Then $S(u)$ is Hamiltonian.
	\end{corollary}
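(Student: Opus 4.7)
The plan is to combine Corollary~\ref{cor:lift_to_DB} with Proposition~\ref{prop:liftequivalence} and the fact, noted at the start of Section~\ref{subsec:Hamilton}, that for a De Bruijn cycle $w$ the graph $S(w)$ is literally a Hamilton cycle of the De Bruijn graph $B(a,n)$. Once these three ingredients are in hand, the proof is essentially a one-liner: any Hamilton cycle contained in $S(u)$ witnesses that $S(u)$ is Hamiltonian.

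In more detail, I would first invoke Corollary~\ref{cor:lift_to_DB} to produce a De Bruijn cycle $w$ for $\A^n$ that is a lift of the given upcycle $u$. Next, I would observe that the vertex set of $S(w)$ is all of $\A^n$ and that consecutive windows of $w$, read cyclically, traverse each word of $\A^n$ exactly once; that is precisely the definition of $S(w)$ being a Hamilton cycle of $B(a,n)$ (and in particular a spanning cycle on $\A^n$). Then I would apply Proposition~\ref{prop:liftequivalence} to conclude that $S(w)$ is a labeled subgraph of $S(u)$. Since $S(u)$ and $S(w)$ share the vertex set $\A^n$ and the edge set of $S(w)$ already forms a Hamilton cycle, $S(u)$ contains this Hamilton cycle and is therefore Hamiltonian.

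There is no real obstacle here because both inputs have been established. The only thing to be a bit careful about is matching the definitions: that $S(w)$ for a De Bruijn cycle $w$ really is a Hamilton cycle (not merely Hamiltonian), which follows because each window of $w$ corresponds to a unique vertex of $B(a,n)$, each vertex is visited exactly once as one reads $w$ cyclically, and the edges of $S(w)$ are exactly the transitions between consecutive windows. The rest is purely formal.
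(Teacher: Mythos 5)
Your proposal is correct and follows exactly the paper's own argument: lift $u$ to a De Bruijn cycle $w$ via Corollary~\ref{cor:lift_to_DB}, note that $S(w)$ is a Hamilton cycle of $B(a,n)$, and apply Proposition~\ref{prop:liftequivalence} to see that $S(w)$ is a subgraph of $S(u)$. No differences worth noting.
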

	
	\begin{proof}
		By Corollary \ref{cor:lift_to_DB}, $u$ lifts to a De Bruijn cycle $w$. The graph $S(w)$ is a Hamilton cycle of $B(a,n)$ and by Proposition \ref{prop:liftequivalence} is a subgraph of $S(u)$. Thus $S(u)$ is Hamiltonian.
	\end{proof}

	\begin{example}
		Consider again Example \ref{ex: lifts}. The upcycle $u = (001\diam110\diam)$ lifts to two distinct De Bruijn cycles $w_1 = (0010110000111101)$ and $w_2 = (0010110100111100)$. By Proposition \ref{prop:liftequivalence}, $S(w_1)$ and $S(w_2)$ give Hamilton cycles of $S(u)$, which is illustrated below in Figures~\ref{fig: circle1 lift1} and \ref{fig: circle1 lift2}.
		\begin{figure}[H]
			\begin{center}
				\begin{tikzpicture}
					
					\foreach \x [count=\i] in {0,0,1,$\diam$,1,1,0,$\diam$}
					{
						{\foreach \y in {1.5,3}
							{\color{\x} \draw [ultra thick] (112.5-\i*45:\y) arc (112.5-\i*45:67.5-\i*45:\y);}
							{\draw [ultra thick] (90-\i*45:3.4) node {\Large{\x}};}
						}
					}
					
					{
						\color{0}
						{\draw [dashed] (112.5+180:1.5) arc (112.5+180:67.5+180:1.5);}
						{\draw [ultra thick] (112.5+180:3) -- (67.5+180:1.5);}
						{\draw [ultra thick] (112.5+2*180:1.5) arc (112.5+2*180:67.5+2*180:1.5);}
						{\draw [dashed] (112.5+2*180:3) -- (67.5+2*180:1.5);}
						
						\color{1}
						{\draw [ultra thick] (112.5+180:1.5) -- (67.5+180:3);};
						{\draw [dashed] (112.5+180:3) arc (112.5+180:67.5+180:3);}
						{\draw [dashed] (112.5+2*180:1.5) -- (67.5+2*180:3);};
						{\draw [ultra thick] (112.5+2*180:3) arc (112.5+2*180:67.5+2*180:3);}
					}
					\foreach \x in {1,...,8}
					{
						{\node[shape=circle,fill=black, scale=0.5] at (-22.5+\x*45:1.5) {};}
						{\node[shape=circle,fill=black, scale=0.5] at (-22.5+\x*45:3) {};}
					};
					
					\foreach \x [count=\i] in {0001,0010,0101,1011,0110,1100,1000,0000}
					{\draw (-22.5-\i*45:1.91) node {\footnotesize \x};}
					
					\foreach \x [count=\i] in {1001,0011,0111,1111,1110,1101,1010,0100}
					{\draw (-22.5-\i*45:3.41) node {\footnotesize \x};}
					
					\directionarrow
					
				\end{tikzpicture}\\
				\caption{
					The De Bruijn cycle $w_1 = (0010110000111101)$ as a lift of $u = (001\diam110\diam)$; $S(w_1)$ is shown as a subgraph of $S(u)$. The dotted edges are the edges of $S(u)$ not contained $S(w_1)$.}
				\label{fig: circle1 lift1}
			\end{center}
		\end{figure}
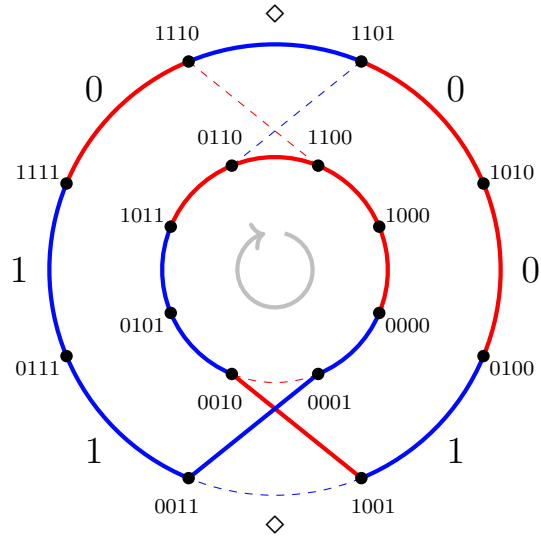 
		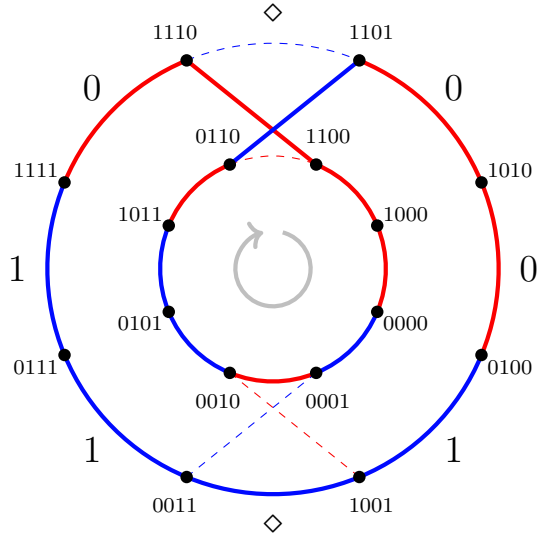
\begin{figure}[H]
			\begin{center}
				\begin{tikzpicture}
					
					\foreach \x [count=\i] in {0,0,1,$\diam$,1,1,0,$\diam$}
					{
						{\foreach \y in {1.5,3}
							{\color{\x} \draw [ultra thick] (112.5-\i*45:\y) arc (112.5-\i*45:67.5-\i*45:\y);}
							{\draw [ultra thick] (90-\i*45:3.4) node {\Large{\x}};}
						}
					}
					
					{
						\color{0}
						{\draw [ultra thick] (112.5+180:1.5) arc (112.5+180:67.5+180:1.5);}
						{\draw [dashed] (112.5+180:3) -- (67.5+180:1.5);}
						{\draw [dashed] (112.5+2*180:1.5) arc (112.5+2*180:67.5+2*180:1.5);}
						{\draw [ultra thick] (112.5+2*180:3) -- (67.5+2*180:1.5);}
						
						\color{1}
						{\draw [dashed] (112.5+180:1.5) -- (67.5+180:3);};
						{\draw [ultra thick] (112.5+180:3) arc (112.5+180:67.5+180:3);}
						{\draw [ultra thick] (112.5+2*180:1.5) -- (67.5+2*180:3);};
						{\draw [dashed] (112.5+2*180:3) arc (112.5+2*180:67.5+2*180:3);}
					}
					\foreach \x in {1,...,8}
					{
						{\node[shape=circle,fill=black, scale=0.5] at (-22.5+\x*45:1.5) {};}
						{\node[shape=circle,fill=black, scale=0.5] at (-22.5+\x*45:3) {};}
					};
					
					\foreach \x [count=\i] in {0001,0010,0101,1011,0110,1100,1000,0000}
					{\draw (-22.5-\i*45:1.91) node {\footnotesize \x};}
					
					\foreach \x [count=\i] in {1001,0011,0111,1111,1110,1101,1010,0100}
					{\draw (-22.5-\i*45:3.41) node {\footnotesize \x};}
					
					\directionarrow
					
				\end{tikzpicture}\\
				\caption{The De Bruijn cycle $w_2 = (0010110100111100)$ as a lift of $u = (001\diam110\diam)$; $S(w_2)$ is shown as a subgraph of $S(u)$. The dotted edges are the edges of $S(u)$ not contained $S(w_2)$.}
				\label{fig: circle1 lift2}
			\end{center}
		\end{figure}
	\end{example}
	
	The drawings of $S(u)$ in Figures \ref{fig: circle1} and \ref{fig: circle2} suggest not only the existence of Hamilton cycles but also the existence of \emph{perfect factors} of De Bruijn graphs. Perfect factors in the De Bruijn graph have been studied, for example, in \cite{Mitchell97} and its references. Here, a perfect factor of a graph $G$ is a set of cycles of the same length whose vertex sets partition $V(G)$. 
	It is worth noting that for any upcycle $u$ for $\A^n$ with $n\mid a^{n-d}$ (including all $d=1$ upcycles), there exists a perfect factor consisting of $a^d$ cycles of length $a^{n-d}$. One construction is as follows. Start a walk at each vertex associated with some equivalence class as defined by $u$. From each of these vertices, follow the upcycle, repeating in place of diamond characters the same $d$ letters, in order, as appear in place of diamond characters in the label of the starting vertex. Since $n\mid a^{n-d}$, each walk will return to its starting vertex after the length of $u$, thus closing $a^d$ cycles of length $a^{n-d}$. For upcycles $u$ with diamondicity $1$, these cycles appear in $S(u)$ as concentric circles, as can be seen in Figures \ref{fig: circle1} and \ref{fig: circle2}.
	
	\subsection{Upcycles as generalized Euler tours}\label{subsec:Euler}
	
	A De Bruijn cycle with parameters $a$ and $n$ can alternatively be viewed as an Euler tour of the De Bruijn graph $B(a,n-1)$ where the words $x, y \in \A^n$ are covered consecutively in the De Bruijn cycle if and only if $x$ and $y$ are consecutive edges of the Euler tour. In fact, the line graph of the Euler tour representation of a De Bruijn cycle $u$ on $B(a,n-1)$ is the Hamilton cycle representation of $u$ on the graph $B(a,n)$. In the same way, one can consider a generalized Euler tour whose line graph is the generalized Hamilton cycle $S(u)$ defined in Section \ref{subsec:Hamilton}. For this generalization, it is useful to conceptualize an Euler tour as a set of ordered pairs of adjacent edges, with the pair $(x,y)$ indicating that edges $x$ and $y$ are consecutive in the tour.
	
	\begin{definition}\label{def:T(u)}
		Given an upcycle $u$ for $\A^n$, define $T(u)\subseteq\A^n\times\A^n$ to be a set of ordered pairs of edges of the De Bruijn graph $B(a,n-1)$ where $(x,y)\in T(u)$ if and only if $x$ and $y$ are adjacent in $B(a,n-1)$ and are covered consecutively in $u$.
	\end{definition}
	
	\begin{remark}
		The set $T(u)$ is precisely the edge set of $S(u)$.
	\end{remark}
	
	\begin{definition}
		Let $u$ be an upcycle for $\A^n$. We call $v \in B(a,n-1)$ a \emph{diamond vertex} corresponding to $u$ if $(x,y) \in T(u)$ for every pair of edges $x$ incident to $v$ and $y$ incident from $v$.
	\end{definition}
	
	The diamond vertices corresponding to $u$ are exactly the words covered by $u_i \cdots u_{i + n - 2}$ for some $i$ with $u_{i + n - 1} = \diam$. There are $\frac{d}{n}a^{n-d}$ diamonds in $u$ and $a^{d-1}$ total words covered by the $(n-1)$-length window preceding each, so there are $\frac{d}{n}a^{n-d}\cdot a^{d-1} = \frac{d}{n}a^{n-1}$ diamond vertices corresponding to $u$ in $B(a,n-1)$. For all other vertices $v$, each edge incident to $v$ is paired in $T(u)$ with exactly one edge incident from $v$.
	
	A drawing of $T(u)$ for $u=(001\diam110\diam)$ is shown on the right in Figure~\ref{fig:tour1}, with colored paths indicating consecutive edges through non-diamond vertices. Shown to the left is $S(u)$, highlighted to illustrate the equivalence of these models.
	
	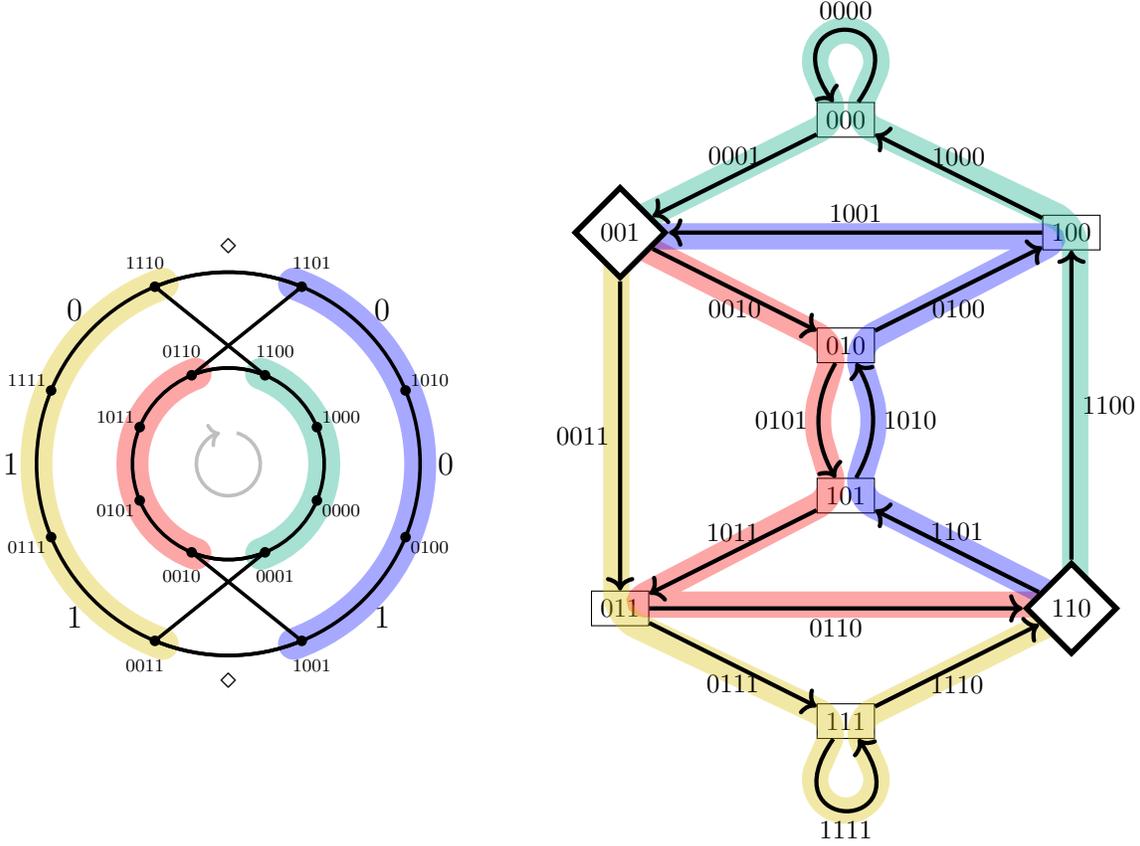
\begin{figure}[H]
		\begin{center}
			\begin{subfigure}[b]{0.4\linewidth}
				\scalebox{.85}{\begin{tikzpicture}
						{\draw[white] (-1,-6) -- (1,-6);   \draw[white] (-1,5) -- (1,5);}
						
						{\draw [0,opacity=.35,line width=5mm,line cap=round] (250:1.5) arc (250:110:1.5);}
						{\draw [1,opacity=.35,line width=5mm,line cap=round] (70:3) arc (70:-70:3);}
						{\draw [2,opacity=.35,line width=5mm,line cap=round] (250:3) arc (250:110:3);}
						{\draw [3,opacity=.35,line width=5mm,line cap=round] (70:1.5) arc (70:-70:1.5);}
						
						\foreach \x [count=\i] in {0,0,1,$\diam$,1,1,0,$\diam$}
						{
							{\foreach \y in {1.5,3}
								{\draw [ultra thick] (112.5-\i*45:\y) arc (112.5-\i*45:67.5-\i*45:\y);}
								{\draw [ultra thick] (90-\i*45:3.4) node {\Large{\x}};}}
						}
						
						\foreach \x in {1,2}
						{
							{\draw [ultra thick] (112.5+\x*180:1.5) arc (112.5+\x*180:67.5+\x*180:1.5);}
							{\draw [ultra thick] (112.5+\x*180:3) -- (67.5+\x*180:1.5);}
							
							{\draw [ultra thick] (112.5+\x*180:1.5) -- (67.5+\x*180:3);};
							{\draw [ultra thick] (112.5+\x*180:3) arc (112.5+\x*180:67.5+\x*180:3);}
						};
						
						\foreach \x in {1,...,8}
						{
							{\node[shape=circle,fill=black, scale=0.5] at (-22.5+\x*45:1.5) {};}
							{\node[shape=circle,fill=black, scale=0.5] at (-22.5+\x*45:3) {};}
						};
						
						\foreach \x [count=\i] in {0001,0010,0101,1011,0110,1100,1000,0000}
						{\draw (-22.5-\i*45:1.91) node {\footnotesize \x};}
						
						\foreach \x [count=\i] in {1001,0011,0111,1111,1110,1101,1010,0100}
						{\draw (-22.5-\i*45:3.41) node {\footnotesize \x};}
						
						\directionarrow
						
				\end{tikzpicture}}\\
				\caption{$S(u)$ for $u=(001\diam110\diam)$ with colored paths corresponding to colored paths in $T(u)$.}
			\end{subfigure}
			\hspace{.5cm}
			\begin{subfigure}[b]{0.5\linewidth}
				\scalebox{1}{\begin{tikzpicture}
						\node[draw] (000) at (0,0) {000};
						\node[draw] (001) at (-3,-1.5) {001};
						\node[draw] (010) at (0,-3) {010};
						\node[draw] (011) at (-3,-6.5) {011};
						\node[draw] (100) at (3,-1.5) {100};
						\node[draw] (101) at (0,-5) {101};
						\node[draw] (110) at (3,-6.5) {110};
						\node[draw] (111) at (0,-8) {111};
						
						{\color{0}
							\draw[opacity=.35,line width=3.5mm,line cap=round, rounded corners=3mm] (-3,-1.54) -- (-.12,-3) -- (-.42,-4) -- (-.12,-5) -- (-2.9,-6.45) -- (3,-6.45);}
						{\color{1}
							\draw[opacity=.35,line width=3.5mm,line cap=round, rounded corners=3mm] (3,-6.46) -- (.12,-5) -- (.42,-4) -- (.12,-3) -- (2.9,-1.55) -- (-3,-1.55);}
						{\color{2}
							\draw[opacity=.35,line width=3.5mm,line cap=round, rounded corners=3mm] (-3.05,-1.5) -- (-3.05,-6.6) -- (-0.1,-8) -- (-.5,-8.9) -- (0,-9.3) -- (.5,-8.9) -- (0.1,-8) -- (3.1,-6.5);}
						{\color{3}
							\draw[opacity=.35,line width=3.5mm,line cap=round, rounded corners=3mm] (3.05,-6.5) -- (3.05,-1.4) -- (0.1,0) -- (.5,.9) -- (0,1.3) -- (-.5,.9) -- (-0.1,0) -- (-3.1,-1.5);}
						
						\node[draw,diamond,line width=.75mm,fill=white] (001) at (-3,-1.5) {001};
						\node[draw,diamond,line width=.75mm,fill=white] (110) at (3,-6.5) {110};
						
						{\draw[ultra thick,->] (001) -- node[midway,below]{0010} (010);
							\draw[ultra thick,->,bend right] (010) to node[left]{0101} (101);
							\draw[ultra thick,->] (101) -- node[midway,above]{1011} (011);
							\draw[ultra thick,->] (011) -- node[midway,below]{0110} (110);}
						{\draw[ultra thick,->] (110) -- node[midway,above]{1101} (101);
							\draw[ultra thick,->,bend right] (101) to node[right]{1010} (010);
							\draw[ultra thick,->] (010) -- node[midway,below]{0100} (100);
							\draw[ultra thick,->] (100) -- node[midway,above]{1001} (001);}
						{\draw[ultra thick,->] (001) -- node[midway,left]{0011} (011);
							\draw[ultra thick,->] (011) -- node[midway,below]{0111} (111);
							\draw[ultra thick,loop,->,out=235,in=305,looseness=12] (111) to node[below]{1111} (111);
							\draw[ultra thick,->] (111) -- node[midway,below]{1110} (110);}
						{\draw[ultra thick,->] (110) -- node[midway,right]{1100} (100);
							\draw[ultra thick,->] (100) -- node[midway,above]{1000} (000);
							\draw[ultra thick,loop,->,out=55,in=125,looseness=12] (000) to node[above]{0000} (000);
							\draw[ultra thick,->] (000) -- node[midway,above]{0001} (001);}
				\end{tikzpicture}}\\
				\caption{$T(u)$ for $u=(001\diam110\diam)$. Colored paths indicate consecutive edges through non-diamond vertices.}
			\end{subfigure}
			\caption{$S(u)$ and $T(u)$ for $u=(001\diam110\diam)$. Here $a=2$ and $n=4$.}
			\label{fig:tour1}
		\end{center}
	\end{figure}
	
	One can think of $T(u)$ as the union of all Euler tours associated with De Bruijn lifts of $u$, a claim we will justify with Lemmas \ref{lem:length(n+1)wordsinPN} and \ref{lem:into-outof-diamondvertices}. Diamond vertices are the points at which the Euler tours associated with distinct De Bruijn lifts of $u$ may differ based on the expression of the following diamond character when lifting $u$. 
	
	\begin{lemma}\label{lem:length(n+1)wordsinPN}
		For all $a>1$, $n>1$, and $t\ge n$, every word of length $n+1$ appears in some $(a,n,t)$-perfect necklace.
	\end{lemma}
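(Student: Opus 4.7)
The plan is to translate the claim into a statement about Euler tours of the astute graph $G(a,n-1,t)$, via the correspondence from Section~\ref{sec:astute_graphs_etc}: an $(a,n,t)$-perfect necklace is exactly an Euler tour of $G(a,n-1,t)$ whose edges are labeled by the length-$n$ words they cover (the $xvy$ labels of Definition~\ref{def:AstuteBad}). Under this correspondence, a word $w_1\cdots w_{n+1}$ appears as a substring of the necklace if and only if some Euler tour traverses the edge $e_1$ labeled $w_1\cdots w_n$ (from $(w_1\cdots w_{n-1},s)$ to $v:=(w_2\cdots w_n,s+1)$) immediately followed by the edge $e_2$ labeled $w_2\cdots w_{n+1}$ (from $v$ to $(w_3\cdots w_{n+1},s+2)$), for some position class $s$. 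So it suffices to exhibit one such Euler tour for some choice of $s$.

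Begin with any Euler tour $T$ of $G(a,n-1,t)$, which exists because astute graphs are Eulerian, and fix any $s$. If $T$ already traverses $e_1,e_2$ consecutively we are done; otherwise in $T$ the edge $e_1$ is followed by some other out-edge $e'_2\neq e_2$ of $v$, and $e_2$ is preceded by some other in-edge $e'_1\neq e_1$ of $v$. I would then perform a \emph{transition swap} at $v$: change the in-to-out pairings $(e_1,e'_2),(e'_1,e_2)$ into $(e_1,e_2),(e'_1,e'_2)$, leaving all other pairings of $T$ unchanged. Following the new pairings decomposes the edges of $G(a,n-1,t)$ into exactly two closed sub-trails $\sigma_1$ (using $e_1,e_2$) and $\sigma_2$ (using $e'_1,e'_2$), both containing $v$. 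To recombine them into a single Euler tour while preserving the new $v$-pairings, I would splice $\sigma_1,\sigma_2$ at any common vertex $p\neq v$ via a second transition swap at $p$ that exchanges one in-to-out pair of $\sigma_1$ with one of $\sigma_2$; since the two sub-trails lie on opposite sides of such a swap, it merges them into a single closed trail.

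The main obstacle is verifying that such a common vertex $p\neq v$ exists. Since $t\geq n\geq 2$, the graph $G(a,n-1,t)$ has no loops — an edge $(x,s)\to(x,s)$ would force $s+1\equiv s\pmod t$ — so each of $\sigma_1,\sigma_2$ has at least two edges and hence visits some vertex other than $v$. If they shared only $v$, then since every edge of $G(a,n-1,t)$ lies in one of the two sub-trails, no edge of $G(a,n-1,t)$ would join $V(\sigma_1)\setminus\{v\}$ and $V(\sigma_2)\setminus\{v\}$, forcing $v$ to be a (weak) cut vertex. I would rule this out by showing that $G(a,n-1,t)\setminus\{v\}$ is connected for $a\geq 2,\,n\geq 2,\,t\geq 2$: from any starting vertex one walks forward through time classes toward any target, and whenever the next step would enter time class $s$ one chooses an out-neighbor in $B(a,n-1)$ different from $x$, which is possible because each vertex of $B(a,n-1)$ has $a\geq 2$ out-neighbors and at most one equals $x$. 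With sufficiently many steps the walk can reach any desired vertex while avoiding $v$. Granting this, $\sigma_1$ and $\sigma_2$ must share some vertex $p\neq v$, the splice at $p$ produces a single Euler tour with $e_1,e_2$ consecutive, and the corresponding $(a,n,t)$-perfect necklace contains the word $w_1\cdots w_{n+1}$ as a substring.
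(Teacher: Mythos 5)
Your proof is correct, but it takes a genuinely different route from the paper's. The paper works ``walk-first'': it builds a length-$(n+1)$ walk $W$ in $G(a,n-1,t)$ spelling out the target word (prefixed by a run-up from $0^{n-1}$), deletes $W$, checks the resulting degree deficiencies, and closes up with an Euler trail of the remainder; because $W$ may be a path, a closed walk, or may repeat an edge depending on how $t$ compares to $n$ and $n+1$, the paper needs a case analysis on $t>n+1$, $t=n+1$, and $t=n$ (with a further subcase at $t=n=2$), and the girth-$t$ observation is what controls which case occurs. You instead work ``tour-first'': start from an arbitrary Euler tour and perform a transition swap at the middle vertex $v$ to force $e_1e_2$ to be consecutive, then re-merge the two resulting closed trails by a second swap at a shared vertex $p\neq v$. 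Your route eliminates the case analysis entirely and concentrates all the graph theory into one clean claim --- that $G(a,n-1,t)$ minus a vertex is weakly connected --- while the paper's route is more constructive and avoids any vertex-deleted connectivity statement (though it, too, silently uses connectivity of the edge-deleted graph when invoking the Euler trail).

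The one place you should firm up is exactly that connectivity claim, and note that the hypothesis $t\ge n$ is what makes your sketch go through: once you commit to a target $(z,s_2)$, the final $n-1$ steps of the walk are forced, so you must ensure the time class $s$ of $v=(x,s)$ is not hit in the interior of that forced segment. Since $t\ge n>n-1$, any $n-1$ consecutive steps meet each residue class modulo $t$ at most once, so you can schedule the forced segment so that class $s$ occurs only at arrival (where the vertex is $z\ne x$ or the pair $(z,s_2)\ne(x,s)$) or not at all, and use your free-choice avoidance ($a\ge 2$ out-neighbors, at most one equal to $x$) everywhere earlier. As stated, your claim for all $t\ge 2$ is broader than what this argument delivers, but the lemma only needs $t\ge n$, so the proof is complete once this is made explicit.
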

	\begin{proof}
		Take any arbitrary word in $\A^{n+1}$, say $x_1x_2\cdots x_{n+1}$. As in the proof of Proposition \ref{thm:StartingWith0sExists}, we will show that there exists an Euler tour of the astute graph $G(a,n-1,t)$ which corresponds to an $(a,n,t)$-perfect necklace containing the word $x_1x_2\cdots x_{n+1}$. Since the girth of $G(a,n-1,t)$ is $t$, we consider the following cases.
		
		\emph{Case 1.} If $t>n+1$, any length $n+1$ walk must be a path. Thus, begin at any vertex of $G(a,n-1,t)$, say $(0^{n-1},0)$, and construct a walk $W$ of length $n+1$ by taking edges according to the word $x_1x_2\cdots x_{n+1}$, ending at the vertex labeled $(x_3x_4\cdots x_{n+1},n+1)$. Since $W$ is a path, omitting $W$ from the graph decreases by one the in- and out-degree of, respectively, the last and first vertices of $W$, while all other vertices of $W$ have in- and out-degree both decreased by one. Hence, the remaining graph has an Euler trail starting at the last vertex of $W$ and ending at the first, altogether forming an Euler tour of $G(a,n-1,t)$ when appended to $W$.
		
		\emph{Case 2.} If $t=n+1$, then any length $n+1$ walk either forms a path, in which case the degrees of all vertices of $W$ are affected as in Case 1, or else forms a cycle. Thus, one can again begin at any vertex of $G(a,n-1,t)$, say $(0^{n-1},0)$, and construct a walk $W$ of length $n+1$ by taking edges according to the word $x_1x_2\cdots x_{n+1}$, ending at the vertex labeled $(x_3x_4\cdots x_{n+1},0)$. If $W$ forms a cycle, then omitting it from the graph decreases by one both the in- and out-degree of all vertices of $W$. Hence, the remaining graph has an Euler tour beginning and ending at the start of $W$, once again forming an Euler tour of $G(a,n-1,t)$ when appended to $W$.
		
		\emph{Case 3.} If $t=n$, then it is possible for a walk of length $n+1$ to repeat at most a single edge. To avoid this, begin at any vertex of $G(a,n-1,t)$ whose first letter is not $x_2$ or whose second letter is not $x_3$, say $({x_2}'{x_3}'\cdots {x_n}',0)$, where ${x_i}'$ is any letter in $\A-\{x_i\}$. Construct a walk $W$ of length $n+1$ by taking edges according to the word $x_1x_2\cdots x_{n+1}$, ending at the vertex labeled $(x_3x_4\cdots x_{n+1},1)$.
		
		\emph{Subcase 3.1.} For $n>2$, this selection of the first and second letters of the starting vertex ensures that $W$ is a path: ${({x_2}'{x_3}'\cdots {x_n}',0) \ne (x_2x_3\cdots x_n,0)}$ and ${({x_3}'{x_4}'\cdots {x_n}'x_1,1) \ne (x_3x_4\cdots x_{n+1},1)}$. Hence, the degrees of all vertices of $W$ are affected as in Case 1.
		
		\emph{Subcase 3.2.} In the case $t=n=2$, starting at the vertex $({x_2}',0)$ ensures that $W$ is a path only as long as $x_1 \ne x_3$. For words of the form $x_1x_2x_1$, starting here will result in a walk whose second and last vertices are equal: $W$ is the walk $({x_2}',0) \rightarrow (x_1,1) \rightarrow (x_2,0) \rightarrow (x_1,1)$. Thus, omitting $W$ from the graph decreases by one the out-degree of the first vertex of $W$, decreases by two the in-degree and by one the out-degree of the last vertex of $W$, and decreases by one both the in- and out-degree of the other vertices of $W$. Hence, the remaining graph has an Euler trail starting at the last vertex of $W$ and ending at the first, altogether forming an Euler tour of $G(a,n-1,t)$ when appended to $W$.
		
		Therefore, for $a>1$, $n>1$, and $t\ge n$, every word of length $n+1$ appears in some $(a,n,t)$-perfect necklace.
	\end{proof}
	
	\begin{lemma}\label{lem:into-outof-diamondvertices}
		Let $u$ be an upcycle, and let $x \in \A^{n-1}$ be a diamond vertex corresponding to $u$. Then for every $y,z\in\A$, there exists some De Bruijn lift $w$ of $u$ such that $(yx,xz)\in T(w)$.
	\end{lemma}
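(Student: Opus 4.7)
The plan is to use Theorem~\ref{thm:necklacelift} (with $\delta=d$) to reduce the problem to finding an $(a,d,\frac{d\,a^{n-d}}{n})$-perfect necklace containing a specific length-$(d+1)$ substring, and then invoke Lemma~\ref{lem:length(n+1)wordsinPN}.

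First, I would set up the position. Since $x$ is a diamond vertex corresponding to $u$, there is a position $i$ in $u$ with $u_i\cdots u_{i+n-2}$ covering $x$ and $u_{i+n-1}=\diam$. The $n$-periodicity of diamonds forces $u_{i-1}=\diam$ as well. Producing the desired De Bruijn lift $w$ amounts to filling the diamonds of $u^{(a^d)}$ so that the length-$(n+1)$ block $w_{i-1}\cdots w_{i+n-1}$ equals $y\,x\,z$: put $y$ at position $i-1$, put the corresponding letters of $x$ (call them $\alpha_1,\ldots,\alpha_{d-1}$) at the $d-1$ internal diamond positions $j_1<\cdots<j_{d-1}$ inside $u_i\cdots u_{i+n-2}$, and put $z$ at position $i+n-1$. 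Let $\xi = y\,\alpha_1\cdots\alpha_{d-1}\,z \in \A^{d+1}$, which is a specific length-$(d+1)$ target word.

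Next, I would observe that the $d+1$ diamond positions $i-1,j_1,\ldots,j_{d-1},i+n-1$ form a block of $d+1$ consecutive diamonds in the cyclic positional order of diamonds of $u$ (hence also of $u^{(a^d)}$): by construction, no other diamond of $u$ lies strictly between any two of them. Under the correspondence of Theorem~\ref{thm:necklacelift}, filling these particular diamonds with $\xi$ corresponds exactly to requiring that the $(a,d,\frac{d\,a^{n-d}}{n})$-perfect necklace $v$ contain $\xi$ as a length-$(d+1)$ substring at a specific starting index. Since any cyclic rotation of a perfect necklace is again a perfect necklace with the same parameters (the residue labels in the definition simply shift), it suffices to exhibit any perfect necklace containing $\xi$ as a substring somewhere; a rotation then places $\xi$ at the required index. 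Lemma~\ref{lem:length(n+1)wordsinPN}, applied with the lemma's $n$ replaced by our $d$ and $t=\frac{d\,a^{n-d}}{n}$, supplies exactly such a necklace $v'$. Rotating $v'$ and plugging the result into Theorem~\ref{thm:necklacelift} yields a De Bruijn lift $w$ of $u$ with $w_{i-1}\cdots w_{i+n-1}=y\,x\,z$, so $yx$ and $xz$ appear consecutively in $w$ and $(yx,xz)\in T(w)$.

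The main obstacle is verifying that Lemma~\ref{lem:length(n+1)wordsinPN} actually applies, namely that $d>1$ and $t\ge d$ (equivalently $a^{n-d}\ge n$). The inequality $a^{n-d}\ge n$ is mild in light of the nonexistence bounds on $d$ from \cite{G18} combined with $a\ge 2$. The hypothesis $d>1$ excludes the case $d=1$, which covers every currently known non-trivial upcycle. In that case $\xi=yz$ has length only $2$, and what is needed is the weaker statement that every length-$2$ word appears in some $(a,1,a^{n-1}/n)$-perfect necklace. This follows from a short Euler-tour construction on the astute graph $G(a,0,a^{n-1}/n)$, strictly easier than the cases treated in the proof of Lemma~\ref{lem:length(n+1)wordsinPN}, but it should be written out separately so that the conclusion holds for all known upcycles.
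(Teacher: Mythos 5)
Your proposal follows essentially the same route as the paper: identify the $d+1$ diamonds of $u$ at and around the window covering $x$, invoke Lemma~\ref{lem:length(n+1)wordsinPN} to obtain an $(a,d,\frac{d}{n}a^{n-d})$-perfect necklace containing the target word $y\alpha_1\cdots\alpha_{d-1}z$ of length $d+1$, and feed it into the lift characterization (Theorem~\ref{thm:necklacelift}) to produce the De Bruijn lift $w$. Your two extra checks---that a rotation of a perfect necklace is again a perfect necklace, needed to place the target word at the correct index, and that Lemma~\ref{lem:length(n+1)wordsinPN} requires its length parameter to exceed $1$ and so does not literally cover $d=1$---are points the paper's proof passes over silently, and the separate short argument you sketch for $d=1$ is genuinely needed for the proof to apply to every currently known upcycle.
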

	\begin{proof}
		If $x$ is a diamond vertex covered by $u_i \cdots u_{i + n -2}$, then $u_{i-1} = u_{i + n -1} = \diam$. Lemma \ref{lem:length(n+1)wordsinPN} implies that every word of length $d+1$ must appear in some $(a,d,\tfrac{d}{n}a^{n-d})$-perfect necklace. Thus, for any $y,z \in \A$, there exists a perfect necklace $v$ such that the $d+1$ letters of $yxz$ which are covered by diamonds in $u$ are seen in order in $v$. In particular, we may choose $v$ so that $yxz$ appears in the De Bruijn lift $w$ of $u$ associated with $v$, so that $(yx,xz)\in T(w)$.
	\end{proof}
	
	\begin{prop}\label{prop:T(u)=unionofT(w)}
		Let $u$ be any upcycle, and let $I_u$ be the set of all De Bruijn lifts of $u$. Then ${T(u) = \bigcup_{w\in I_u} T(w)}$.
	\end{prop}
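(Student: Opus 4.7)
The plan is to prove both inclusions separately. For $\bigcup_{w\in I_u} T(w) \subseteq T(u)$, I simply quote Proposition~\ref{prop:liftequivalence}: for each De Bruijn lift $w$ of $u$, $S(w)$ is a labeled subgraph of $S(u)$, and since $T(\cdot)$ is by definition the edge set of $S(\cdot)$, this gives $T(w) \subseteq T(u)$ for every $w \in I_u$. Taking the union, which is nonempty by Corollary~\ref{cor:lift_to_DB}, yields the inclusion.

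For the reverse inclusion, I would fix $(x,y) \in T(u)$ and let $v \in \A^{n-1}$ be the overlap of $x$ and $y$, i.e., the last $n-1$ characters of $x$ (equivalently, the first $n-1$ characters of $y$), which is a vertex of $B(a,n-1)$. The proof splits on whether $v$ is a diamond vertex corresponding to $u$. If $v$ is a diamond vertex, write $x = y'v$ and $y = vz'$ for letters $y', z' \in \A$; then Lemma~\ref{lem:into-outof-diamondvertices} directly produces a De Bruijn lift $w \in I_u$ with $(x,y) \in T(w)$, and we are done. If $v$ is not a diamond vertex, then by the remark immediately following the definition of diamond vertex, each of the $a$ edges of $B(a,n-1)$ incident to $v$ is paired in $T(u)$ with exactly one edge incident from $v$, so the pairs of $T(u)$ meeting $v$ form a perfect matching of size $a$ between in-edges and out-edges at $v$. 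Now pick any De Bruijn lift $w$ (which exists by Corollary~\ref{cor:lift_to_DB}); since $w$ is an Euler tour of $B(a,n-1)$, it traverses $v$ exactly $a$ times, so the pairs of $T(w)$ meeting $v$ also form a perfect matching of size $a$. Because $T(w) \subseteq T(u)$ by the first inclusion, these two matchings at $v$ must coincide, giving $(x,y) \in T(w)$.

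The main substantive work is the diamond-vertex case, but that is already packaged into Lemma~\ref{lem:into-outof-diamondvertices} (which itself leans on Lemma~\ref{lem:length(n+1)wordsinPN} about length-$(n+1)$ subwords of perfect necklaces). The non-diamond case reduces to a short pigeonhole argument once one observes that the matchings at $v$ in $T(u)$ and in $T(w)$ both already attain the maximal size $a$, so the containment $T(w)\subseteq T(u)$ forces equality locally.
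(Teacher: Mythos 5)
Your proposal is correct and follows essentially the same route as the paper: the forward inclusion via Proposition~\ref{prop:liftequivalence}, and the reverse inclusion by splitting on whether the overlap vertex is a diamond vertex, invoking Lemma~\ref{lem:into-outof-diamondvertices} in the diamond case. The only (cosmetic) difference is in the non-diamond case, where the paper observes directly that the neighboring characters $u_{i-1}, u_{i+n-1}$ are letters so the pair is forced in every lift, while you reach the same conclusion by a counting argument comparing the $a$ pairs of $T(u)$ at $v$ with the $a$ pairs of an Euler tour at $v$.
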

	\begin{proof}
		Note that if $w$ is a lift of $u$, then $T(w) \subseteq T(u)$ by Proposition \ref{prop:liftequivalence}. So $\bigcup_{w\in I_u} T(w) \subseteq T(u)$. Suppose $x \in \A^{n-1}$ and $y,z \in \A$ so that $(yx, xz) \in T(u)$. If $x$ is a diamond vertex, then by Lemma \ref{lem:into-outof-diamondvertices}, there is some De Bruijn lift $w$ of $u$ so that $(yx, xz) \in T(w)$. If $x$ is not a diamond vertex covered by $u_i \cdots u_{i + n - 2}$, then $u_{i-1}, u_{i + n - 1} \neq \diam$ and $(yx, xz) \in T(w)$ for every lift $w$ of $u$.
	\end{proof}
	
	\begin{prop}
		Let $u$ and $u'$ be upcycles with diamondicity $d$ and $d'$ respectively, and let $I_u$ be the set of De Bruijn lifts of $u$. Then $u'$ is a lift of $u$ if and only if $T(u')=\bigcup_{w \in J} T(w)$ for some subset $J$ of $I_u$.
	\end{prop}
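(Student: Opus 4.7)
The plan is to prove both directions by combining Proposition~\ref{prop:liftequivalence} (which characterizes lifts via subgraph containment of $S(\cdot)$, equivalently $T(\cdot)$) with Proposition~\ref{prop:T(u)=unionofT(w)} (which writes $T(u')$ as a union over $I_{u'}$), and to use the transitivity of the lift relation stated just after Definition~\ref{def:fold/lift}.

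For the forward direction, assume $u'$ is a lift of $u$. By Corollary~\ref{cor:lift_to_DB}, $u'$ itself lifts to a De Bruijn cycle, so $I_{u'}$ is nonempty; moreover, by transitivity of lifts, every $w\in I_{u'}$ is a De Bruijn lift of $u$, so $I_{u'}\subseteq I_u$. Applying Proposition~\ref{prop:T(u)=unionofT(w)} to $u'$ gives $T(u')=\bigcup_{w\in I_{u'}} T(w)$, so taking $J:=I_{u'}$ exhibits the desired subset of $I_u$.

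For the reverse direction, suppose $T(u')=\bigcup_{w\in J} T(w)$ for some $J\subseteq I_u$. Each $w\in J$ is a De Bruijn lift of $u$, and by Proposition~\ref{prop:liftequivalence} this is equivalent to $S(w)$ being a labeled subgraph of $S(u)$, i.e., $T(w)\subseteq T(u)$. Taking the union over $w\in J$ yields $T(u')\subseteq T(u)$, so $S(u')$ is a labeled subgraph of $S(u)$, and Proposition~\ref{prop:liftequivalence} (applied in reverse) gives that $u'$ is a lift of $u$.

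The only subtle point is the forward direction, where one must justify $I_{u'}\subseteq I_u$; this follows immediately from the transitivity of lifts noted in the paper, so no genuine obstacle remains. The proof is then a two-line appeal to the previously established Propositions~\ref{prop:liftequivalence} and~\ref{prop:T(u)=unionofT(w)}.
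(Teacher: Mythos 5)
Your proof is correct and follows essentially the same route as the paper: the forward direction uses transitivity of lifts to get $I_{u'}\subseteq I_u$ and then applies Proposition~\ref{prop:T(u)=unionofT(w)} to $u'$ with $J=I_{u'}$, while the reverse direction reduces to $T(u')\subseteq T(u)$ and invokes Proposition~\ref{prop:liftequivalence}. The only cosmetic difference is that the paper obtains $\bigcup_{w\in J}T(w)\subseteq T(u)$ by citing Proposition~\ref{prop:T(u)=unionofT(w)} for $u$ itself, whereas you derive it from $T(w)\subseteq T(u)$ for each $w\in J$ via Proposition~\ref{prop:liftequivalence}; these are interchangeable.
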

	\begin{proof}
		If $T(u')=\bigcup_{w \in J} T(w)$ for some $J \subseteq I_u$, then $T(u') \subseteq \bigcup_{w \in I_u} T(w)$, which is $T(u)$ by Proposition \ref{prop:T(u)=unionofT(w)}. Equivalently, $E(S(u')) \subseteq E(S(u))$, hence $u'$ is a lift of $u$ by Proposition \ref{prop:liftequivalence}.
		
		Since any De Bruijn lift of $u'$ is a De Bruijn lift of $u$, the converse follows from Proposition \ref{prop:T(u)=unionofT(w)}, where $J$ is the set of De Bruijn lifts of $u'$.
	\end{proof}

	\begin{example}\label{ex: liftsEulerian}
		In Example \ref{ex: lifts}, we saw that the upcycle $u = (001\diam110\diam)$ lifts to just two distinct De Bruijn cycles, $w_1 = (0010110000111101)$ and $w_2 = (0010110100111100)$. Proposition \ref{prop:T(u)=unionofT(w)} gives that ${T(u) = T(w_1) \bigcup T(w_2)}$. This can be seen in Figure \ref{fig:tour1twolifts}, as $T(w_1)$ and $T(w_2)$ differ at exactly the diamond vertices, based on the choices made when unfolding each diamond.
	\end{example}
	
	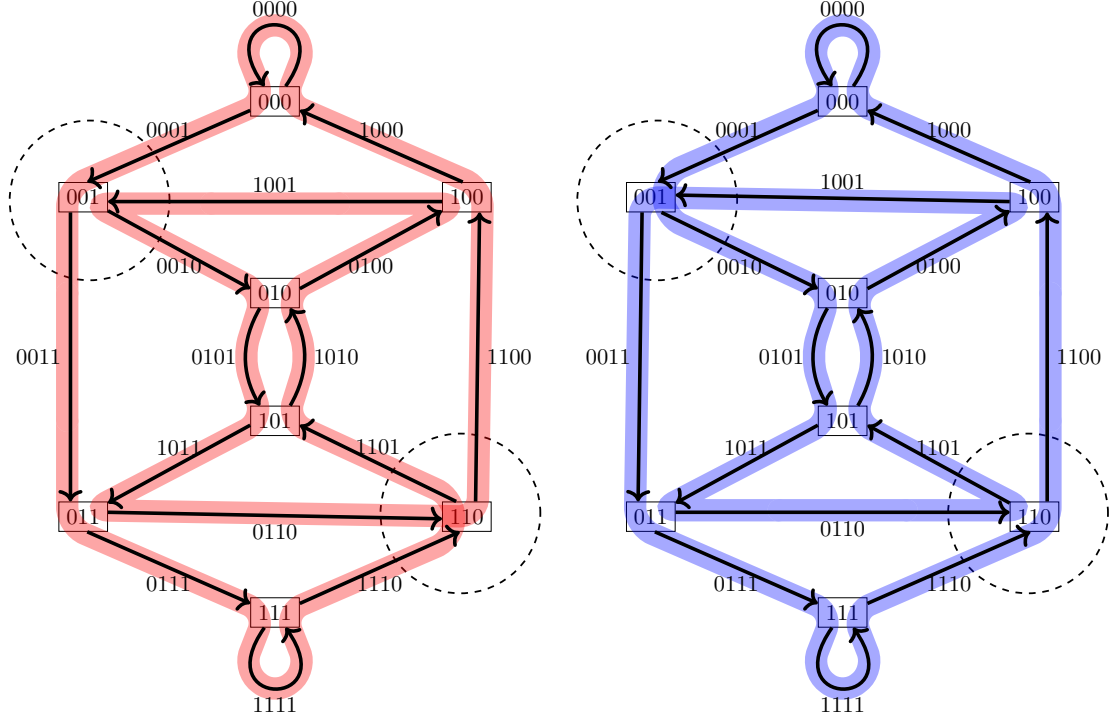
\begin{figure}[H]
		\begin{center}
			\begin{subfigure}[b]{0.45\linewidth}
				\scalebox{.85}{\begin{tikzpicture}
						\node[draw] (000) at (0,0) {000};
						\node[draw] (001) at (-3,-1.5) {001};
						\node[draw] (010) at (0,-3) {010};
						\node[draw] (011) at (-3,-6.5) {011};
						\node[draw] (100) at (3,-1.5) {100};
						\node[draw] (101) at (0,-5) {101};
						\node[draw] (110) at (3,-6.5) {110};
						\node[draw] (111) at (0,-8) {111};
						
						\draw[fill=none,dashed,thick](-2.9,-1.55) circle (1.25) node {};
						\draw[fill=none,dashed,thick](2.9,-6.45) circle (1.25) node {};
						
						{\color{0}
							\draw[opacity=.35,line width=3.5mm,line cap=round, rounded corners=3mm] (-1,-1.6) -- (-2.88,-1.6) -- (-.19,-3) -- (-.5,-4) -- (-.19,-5) -- (-2.88,-6.4) -- (3.17,-6.5) -- (3.25,-1.35) -- (0.15,0) -- (.5,.9) -- (0,1.3) -- (-.5,.9) -- (-0.15,0) -- (-3.25,-1.35) -- (-3.25,-3);
							
							\draw[opacity=.35,line width=3.5mm,line cap=round, rounded corners=3mm] (-3.25,-5) -- (-3.25,-6.65) -- (-0.15,-8) -- (-.5,-8.9) -- (0,-9.3) -- (.5,-8.9) -- (0.15,-8) -- (2.8,-6.85) -- (2.8,-6.2) -- (.19,-5) -- (.5,-4) -- (.19,-3) -- (2.88,-1.6) -- (1,-1.6);}
						
						{\color{white}
							\draw[line width=3.5mm,line cap=round, rounded corners=3mm] (1,-1.6) -- (-1,-1.6);
							\draw[line width=3.5mm,line cap=round, rounded corners=3mm]  (-3.25,-3) -- (-3.25,-5);
							
							\color{0}
							\draw[opacity=.35,line width=3.5mm,line cap=round, rounded corners=3mm] (1,-1.6) -- (-1,-1.6);
							\draw[opacity=.35,line width=3.5mm,line cap=round, rounded corners=3mm] (-3.25,-3) -- (-3.25,-5);}
						
						{\draw[ultra thick,->] (001) -- node[midway,below]{0010} (010.170);
							\draw[ultra thick,->,bend right] (010.-135) to node[left]{0101} (101.135);
							\draw[ultra thick,->] (101.-170) -- node[midway,above]{1011} (011);
							\draw[ultra thick,->] (011.10) -- node[midway,below]{0110} (110.185);}
						{\draw[ultra thick,->] (110.125) -- node[midway,above]{1101} (101.-10);
							\draw[ultra thick,->,bend right] (101.45) to node[right]{1010} (010.-45);
							\draw[ultra thick,->] (010.10) -- node[midway,below]{0100} (100);
							\draw[ultra thick,->] (100.-170) -- node[midway,above]{1001} (001.-10);}
						{\draw[ultra thick,->] (001.-130) -- node[midway,left]{0011} (011.130);
							\draw[ultra thick,->] (011.-75) -- node[midway,below]{0111} (111.160);
							\draw[ultra thick,loop,->,out=235,in=305,looseness=12] (111) to node[below]{1111} (111);
							\draw[ultra thick,->] (111.20) -- node[midway,below]{1110} (110.245);}
						{\draw[ultra thick,->] (110.60) -- node[midway,right]{1100} (100.-50);
							\draw[ultra thick,->] (100.105) -- node[midway,above]{1000} (000.-20);
							\draw[ultra thick,loop,->,out=55,in=125,looseness=12] (000) to node[above]{0000} (000);
							\draw[ultra thick,->] (000.200) -- node[midway,above]{0001} (001.75);}
				\end{tikzpicture}}\\
				\caption{$T(w_1)$ for ${w_1=(0010110000111101)}$.}
				\label{fig:tour1lift1}
			\end{subfigure}
			\begin{subfigure}[b]{0.45\linewidth}
				\scalebox{.85}{\begin{tikzpicture}
						\node[draw] (000) at (0,0) {000};
						\node[draw] (001) at (-3,-1.5) {001};
						\node[draw] (010) at (0,-3) {010};
						\node[draw] (011) at (-3,-6.5) {011};
						\node[draw] (100) at (3,-1.5) {100};
						\node[draw] (101) at (0,-5) {101};
						\node[draw] (110) at (3,-6.5) {110};
						\node[draw] (111) at (0,-8) {111};
						
						\draw[fill=none,dashed,thick](-2.9,-1.55) circle (1.25) node {};
						\draw[fill=none,dashed,thick](2.9,-6.45) circle (1.25) node {};
						
						{\color{1}
							\draw[opacity=.35,line width=3.5mm,line cap=round, rounded corners=3mm] (1,-6.4) -- (2.88,-6.4) -- (.19,-5) -- (.5,-4) -- (.19,-3) -- (2.88,-1.6) -- (-3.17,-1.5) -- (-3.25,-6.65) -- (-0.15,-8) -- (-.5,-8.9) -- (0,-9.3) -- (.5,-8.9) -- (0.15,-8) -- (3.25,-6.65) -- (3.25,-5);
							
							\draw[opacity=.35,line width=3.5mm,line cap=round, rounded corners=3mm] (3.25,-3) -- (3.25,-1.35) -- (0.15,0) -- (.5,.9) -- (0,1.3) -- (-.5,.9) -- (-0.15,0) -- (-2.8,-1.15) -- (-2.8,-1.8) -- (-.19,-3) -- (-.5,-4) -- (-.19,-5) -- (-2.88,-6.4) -- (-1,-6.4);}
						
						{\color{white}
							\draw[line width=3.5mm,line cap=round, rounded corners=3mm] (-1,-6.4) -- (1,-6.4);
							\draw[line width=3.5mm,line cap=round, rounded corners=3mm]  (3.25,-5) -- (3.25,-3);
							
							\color{1}
							\draw[opacity=.35,line width=3.5mm,line cap=round, rounded corners=3mm] (-1,-6.4) -- (1,-6.4);
							\draw[opacity=.35,line width=3.5mm,line cap=round, rounded corners=3mm] (3.25,-5) -- (3.25,-3);}
						
						{\draw[ultra thick,->] (001.-55) -- node[midway,below]{0010} (010.170);
							\draw[ultra thick,->,bend right] (010.-135) to node[left]{0101} (101.135);
							\draw[ultra thick,->] (101.190) -- node[midway,above]{1011} (011);
							\draw[ultra thick,->] (011.10) -- node[midway,below]{0110} (110.170);}
						{\draw[ultra thick,->] (110) -- node[midway,above]{1101} (101.-10);
							\draw[ultra thick,->,bend right] (101.45) to node[right]{1010} (010.-45);
							\draw[ultra thick,->] (010.10) -- node[midway,below]{0100} (100);
							\draw[ultra thick,->] (100.190) -- node[midway,above]{1001} (001.5);}
						{\draw[ultra thick,->] (001.-120) -- node[midway,left]{0011} (011.130);
							\draw[ultra thick,->] (011.-75) -- node[midway,below]{0111} (111.160);
							\draw[ultra thick,loop,->,out=235,in=305,looseness=12] (111) to node[below]{1111} (111);
							\draw[ultra thick,->] (111.20) -- node[midway,below]{1110} (110.255);}
						{\draw[ultra thick,->] (110.50) -- node[midway,right]{1100} (100.-50);
							\draw[ultra thick,->] (100.105) -- node[midway,above]{1000} (000.-20);
							\draw[ultra thick,loop,->,out=55,in=125,looseness=12] (000) to node[above]{0000} (000);
							\draw[ultra thick,->] (000.200) -- node[midway,above]{0001} (001.65);}
				\end{tikzpicture}}\\
				\caption{$T(w_2)$ for ${w_2=(0010110100111100)}$.}
				\label{fig:tour1lift2}
			\end{subfigure}
			\caption{Euler tours $T(w_1)$ and $T(w_2)$ for the two De Bruijn lifts of ${u=(001\diam110\diam)}$. Dashed circles highlight the differing consecutive paths through diamond vertices corresponding to $u$.}
			\label{fig:tour1twolifts}
		\end{center}
	\end{figure}

	\section{Pseudorandomness}\label{sec:pseudo}
	
	De Bruijn cycles and upcycles appear similar to randomly generated sequences. This notion is formalized by Golomb's Randomness Postulates: a randomly generated cyclic sequence of numbers from $\A = \set{0, 1, \ldots, a-1}$ is expected to satisfy the \emph{balance}, \emph{run}, and \emph{autocorrelation} properties \cite{GRPEnc}. Sequences with these properties have applications in cryptography and signal design \cite{GG05}. While these properties can be defined for cyclic sequences more generally, for simplicity we define them only for De Bruijn cycles. Given a De Bruijn cycle $w$ for $\mathcal{A}^n$, we define these properties as follows (with the necessary definitions for the R-3 property in Section \ref{subsec:autocor}).
	
	\begin{description}
		
		\item[Balance (R-1)] Each letter of $\A$ appears $a^{n-1}$ times in $w$.
		
		\item[Run (R-2)] For every $r \in [n]$ and every letter $\ell \in \A$, the word $\ell^r$ appears exactly $a^{n-r}$ times in $w$.

		\item[Autocorrelation (R-3)] 
		The modified De Bruijn cycle $\widehat{w}$ is a perfect autocorrelation sequence (Definition \ref{def:autocorrelation}). 
	\end{description}
	
	All De Bruijn cycles are known to satisfy the R-1 and R-2 properties. De Bruijn cycles that have been generated by a linear feedback shift register (LFSR) are known to satisfy the R-3 property (see, e.g., \cite[Section~II]{GK06}). 
	A longstanding conjecture of Golomb asserts that if $w$ is a De Bruijn cycle over $\A = \set{0,1}$ with the R-3 property, then $w$ can be generated by an LFSR \cite{Golomb80}. 
	
	In this section, we discuss these properties with upcycles in mind. In particular, in Section \ref{sec:uni_sub_dist}, we show that all upcycles have properties analogous to R-1 and R-2. Furthermore, in Section \ref{subsec:autocor}, we show that any De Bruijn cycle that is a lift of an upcycle with diamondicity $1$ cannot satisfy the R-3 property. This result shows that these De Bruijn cycles cannot be generated by LFSRs. 
	
	\subsection{Uniform subword distribution}\label{sec:uni_sub_dist}
	
	In this section, we introduce the \emph{partial subwords distribution property}, which generalizes the following well-known property of De Bruijn cycles (for example, see \cite[Proposition~5.2]{GG05}).
	
	\begin{prop}\label{prop:subwordproperty} Let $n \in \mathbb{N}$. For every $k \in [n]$, every De Bruijn cycle for $\A^n$ contains every word in $\A^k$ exactly $a^{n-k}$ times.
	\end{prop}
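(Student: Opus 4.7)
The plan is to exploit the defining property of a De Bruijn cycle (every word of length $n$ appears exactly once as an $n$-window) and lift it to a statement about $k$-windows via the natural map ``the $k$-window at position $i$ is the length-$k$ prefix of the $n$-window at position $i$.''

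First, I would fix $k \in [n]$ and a word $v \in \A^k$, and note that for each $i$, the $k$-window of the De Bruijn cycle $w$ at position $i$ equals $v$ if and only if the $n$-window at position $i$ has $v$ as its length-$k$ prefix, i.e., is of the form $vu$ for some $u \in \A^{n-k}$. Thus the number of occurrences of $v$ as a $k$-window equals the number of indices $i$ for which the $n$-window at $i$ lies in the set $\{vu : u \in \A^{n-k}\}$.

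Next, I would invoke the De Bruijn property: since $w$ covers each word of $\A^n$ exactly once as an $n$-window, each of the $a^{n-k}$ words in $\{vu : u \in \A^{n-k}\}$ appears exactly once. Summing, the number of $k$-windows equal to $v$ is precisely $a^{n-k}$, completing the proof.

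There is essentially no obstacle here — the proof is a one-step counting argument using the bijection between $k$-windows and length-$k$ prefixes of $n$-windows. The only mild care needed is that indices are taken cyclically modulo $a^n$ (the length of the De Bruijn cycle), so that every position $i$ admits both a $k$-window and an $n$-window; this is automatic because $k \le n$ and the sequence is cyclic.
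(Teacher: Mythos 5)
Your proof is correct and follows essentially the same route as the paper's: both count occurrences of $v$ as a $k$-window by identifying them with occurrences of the $a^{n-k}$ length-$n$ words having $v$ as a prefix, each of which appears exactly once by the De Bruijn property. The paper's version is just a terser statement of the identical counting argument.
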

	
	\begin{proof}
		Let $w$ be a De Bruijn cycle for $\A^n$ and $k \in [n]$. Given a word $u$ of length $k$, there are exactly $a^{n-k}$ words of length $n$ that start with $u$, each of which appears exactly once in $w$.
	\end{proof}
	
	Leading up to the partial subwords distribution property, we first need the following two preliminary definitions. Recall that a partial word $u$ covers a word $w$ if there is some consecutive substring of $u$ of length $|w|$ for which the diamonds in this substring can be filled in to obtain $w$.
	
	\begin{definition}
		Given a cyclic partial word $u$ and a word $w$ of length at most $|u|$, define
		$$S_{\abs{w}}(u) = \set{u_iu_{i+1}\cdots u_{i+\abs{w}-1} : 1 \le i \le \abs{u}}$$
		to be the multiset of $\abs{w}$-windows of $u$.
		
		Then define 
		$$
		\mathcal{C}(u,w,q) = \set{v \in S_{\abs{w}}(u): v \text{ covers $w$ and $v$ contains exactly }q \text{ diamonds}}
		$$
		to be the multiset of $\abs{w}$-windows of $u$ that both cover $w$ and contain exactly $q$ diamonds
	\end{definition}
	
	\begin{definition}\label{def:prob_cover}
		If a cyclic partial word $u$ with exactly $q$ diamonds covers a word $w$ of the same length, then we say the \emph{expected multiplicity} of $w$ in $u$, denoted by $E(u,w)$, is $1/a^q$. If $u$ does not cover a word $w$, then we say $E(u,w) = 0$.
		
		If a word $w$ is shorter than a cyclic partial word $u$, 
		we say the \emph{expected multiplicity} of $w$ in $u$ is $E(u,w) = \sum_{v \in S_{\abs{w}}(u)} E(v,w)$, or more concretely:
		\begin{equation}\label{eq:expmultformula}E(u,w) = \sum_{q \ge 0}\abs{\mathcal{C}(u,w,q)}/a^q.\end{equation}
	\end{definition}
	
	\begin{remark}
		If $u$ is a partial cycle (or word), we may fill in the diamonds at random, where each replacement with $l \in \A$ occurs independently with probability $1/a$. For $w \in \A^k$, $E(u,w)$ is the expected number of times $w$ is covered by a randomly filled $u$.
	\end{remark}
	
	\begin{example} 
		Let $\mathcal{A}=\set{0,1}$, $u=(01\diam1\diam11011\diam11)$, and $w=011$. Then
		$$
		S_{\abs{w}}(u) = \set{01\diam,1\diam1,\diam1\diam,1\diam1,\diam11,110,101,011,11\diam,1\diam1,\diam11,110,101},
		$$
		so
		$$
		\mathcal{C}(u,w,q) = \begin{cases}
			\set{011} & \text{if } q=0 \\
			\set{01\diam,\diam11,\diam11} & \text{if } q=1 \\
			\set{\diam1\diam} & \text{if } q = 2 \\
			\varnothing & \text{otherwise}.
		\end{cases}
		$$
		Therefore $E(u,w)=1 + 3 \cdot 1/2 + 1/4 = 11/4.$
	\end{example}
	
	\begin{definition}\label{def:psdp}
		Suppose $u$ is a cyclic partial word over an alphabet of size $a$, and let $n \in \N$. We say $u$ has the \emph{partial subwords distribution property with respect to $n$} (for an upcycle $u$ for $\A^n$, simply \emph{partial subwords distribution property}) if, for every $k \in [n]$ and $v \in \A^k$, the expected multiplicity of $v$ in $u$ is $\abs{u}/a^k$.
	\end{definition}
	
	If $u$ is a cyclic partial word that has an $n$-periodic frame with diamondicity $d$ and length $a^{n-d}$, and $u$ has the partial subwords distribution property with respect to $n$, then taking $k=n$ in the above definition implies that $u$ is an upcycle for $\A^n$. We now prove the converse. 
	
	\begin{theorem}\label{thm:partial_subwords}
		Every upcycle has the partial subwords distribution property.
	\end{theorem}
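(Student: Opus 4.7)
The plan is to reduce the statement for general $k \in [n]$ to the defining property of the upcycle (the case $k=n$) via a double counting argument. Fix $v \in \A^k$ with $k \le n$. The key observation is that each starting position $i$ in $u$ gives rise to both a $k$-window $W_k(i) = u_i \cdots u_{i+k-1}$ and an $n$-window $W_n(i) = u_i \cdots u_{i+n-1}$, and $W_k(i)$ is exactly the length-$k$ prefix of $W_n(i)$. In particular, the $k$-windows of $u$ that cover $v$ are in length-preserving bijection with those $n$-windows of $u$ whose length-$k$ prefix covers $v$, and the number of diamonds is the same on both sides.

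With this identification in hand, I would count the set $P := \{(W, w) : W \text{ is an } n\text{-window of } u,\ w \in \A^n,\ w \text{ begins with } v,\ W \text{ covers } w\}$ in two ways. Counting first by $w$: because $u$ is an upcycle, each $w \in \A^n$ is covered by a unique $n$-window, so $|P|$ equals the number of words of length $n$ beginning with $v$, which is $a^{n-k}$. Counting instead by $W$: say $W$ has $q(W)$ diamonds in its first $k$ positions. Since every $n$-window has exactly $d$ diamonds (by the $n$-periodicity of diamonds noted in the preliminaries), the last $n-k$ positions of $W$ contain $d - q(W)$ diamonds. A word $w = v w'$ with $w' \in \A^{n-k}$ is covered by $W$ if and only if the prefix $W[1{:}k]$ covers $v$ and $W[k+1{:}n]$ covers $w'$; in that case there are exactly $a^{d - q(W)}$ valid choices for $w'$, and otherwise there are none.

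Equating the two counts gives
\[
a^{n-k} \;=\; \sum_{W\,:\,W[1:k]\text{ covers }v} a^{\,d - q(W)} \;=\; a^d \sum_{q \ge 0} \frac{|\mathcal{C}(u,v,q)|}{a^{q}} \;=\; a^d\, E(u,v),
\]
using the bijection from the first paragraph to rewrite the sum over $n$-windows as the expected-multiplicity sum from Definition~\ref{def:prob_cover}. Solving yields $E(u,v) = a^{n-d-k} = |u|/a^k$, which is exactly the partial subwords distribution property.

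There is no single hard step; the whole argument is essentially a book-keeping exercise once one sees that $k$-window statistics for $u$ can be read off from $n$-window statistics (and the upcycle hypothesis supplies the latter). The only place that needs mild care is confirming that the prefix of length $k$ of an $n$-window is itself a $k$-window of the same $u$ at the same position—this uses only that $k \le n$ and that $u$ has length greater than $n$ by Proposition~\ref{prop:upcycle_length}, so indexing is unambiguous. The $k = n$ case, worth stating as a sanity check, just recovers $E(u,v) = a^{-d}$, which matches the fact that the unique window covering $v$ has $d$ diamonds.
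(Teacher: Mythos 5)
Your proof is correct, but it takes a genuinely different route from the paper's. The paper first invokes Corollary~\ref{cor:lift_to_DB} to produce a De Bruijn lift $w$ of $u$, applies the classical subword-distribution property of De Bruijn cycles (Proposition~\ref{prop:subwordproperty}) to get $E(w,v)=a^{n-k}$, and then shows that each $k$-window of $u$ lying in $\mathcal{C}(u,v,q)$ accounts for exactly $a^{d-q}$ occurrences of $v$ in $w$; this yields the same identity $a^{n-k}=\sum_{q\ge 0}a^{d-q}\abs{\mathcal{C}(u,v,q)}$ that you derive. You instead double-count the pairs $(W,w)$ with $W$ an $n$-window of $u$ covering a word $w\in\A^n$ that begins with $v$, using only the defining property of an upcycle (each word covered exactly once) together with the constancy of the diamond count per window. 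The two arguments terminate in the identical equation, but yours is self-contained: it does not depend on the existence of De Bruijn lifts, hence not on Theorem~\ref{thm:necklacelift} or on the existence of perfect necklaces (Proposition~\ref{prop:PerfNecksExist!}), and would allow Theorem~\ref{thm:partial_subwords} to be proved before Section~\ref{sec:lift}. What the paper's version buys is an explicit link between the subword statistics of $u$ and those of its lifts, a viewpoint it reuses later (e.g., in the proof of Theorem~\ref{thm:LiftNotR3}). One tiny quibble: your appeal to Proposition~\ref{prop:upcycle_length} is unnecessary, since windows of a cyclic partial word are read cyclically and the prefix identification holds regardless of $\abs{u}$; nothing breaks, it is just not needed.
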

	
	\begin{proof}
		Let $u$ be an upcycle for $\A^n$ with diamondicity $d$ and alphabet size $a$, and let $w$ be a De Bruijn cycle that is a lift of $u$, which exists by Corollary \ref{cor:lift_to_DB}. Let $k \in [n]$, and let $v \in \A^k$. Note that since $w$ has no diamonds, $E(w,v)$ counts the number of appearances of $v$ in $w$. By Proposition~\ref{prop:subwordproperty}, we know $E(w,v) = a^{n-k}$. We produce a second formula for $E(w,v)$ by partitioning the multiset of $k$-windows of $u$ which cover $v$ into the multisets $\mathcal{C}(u,v,q)$ for $q \ge 0$. We show that each element of $\mathcal{C}(u,v,q)$ corresponds to exactly $a^{d-q}$ appearances of $v$ in $w$ (and every appearance of $v$ in $w$ occurs in this way for some $q \ge 0$). 
		
		Let $s=u_i\cdots u_{i+k-1} \in \mathcal{C}(u,v,q)$, that is, a window of $u$ with length $|v|$ and exactly $q$ diamonds which covers $v$. There is a unique $n$-window $s'=u_i\cdots u_{i+n-1}$ of $u$ that begins at the same position as $s$. By the definition of lifting, $s'$ contributes $a^d$ length $n$ subwords in $w$, and $a^{d-q}$ of those begin with $v$. 
		We have $a^{n-k} = E(w,v) = \sum_{q \ge 0}a^{d-q}\abs{\mathcal{C}(u,v,q)}$. Dividing the left and right sides by $a^d$ and using Equation (\ref{eq:expmultformula}), we have $E(u,v) = \sum_{q \ge 0}\abs{\mathcal{C}(u,v,q)}/a^q = a^{n-k-d}$. 
	\end{proof}
	
	In Sections \ref{subsec:balance} and \ref{subsec:run}, we show that, as with the analogous properties for total words, the partial subwords distribution property implies partial words versions of the balance and run properties, which we call R-$1^{\diam}$ and R-$2^{\diam}$.
	
	\subsubsection{Balance}\label{subsec:balance}
	
	We define an analogue of the R-1 balance property, which we call the \emph{partial words balance property}.
	
	\begin{description}
		
		\item[Partial Words Balance (R-$1^{\diam}$)] 
		Given a partial word $u$ over an alphabet $\A$, we say $u$ has the R-$1^{\diam}$ property if there exists a $k\in\N$ such that each letter of $\A$ appears $k$ times in $u$.
	\end{description}
	
	Note that if a cyclic partial word $u$ with the R-$1^{\diam}$ property has length $N$ and has $D$ diamonds, then $k=(N-D)/a$. For an upcycle $u$ for $\A^n$ with diamondicity $d$, the R-$1^{\diam}$ property is equivalent to the statement that each letter of $\A$ appears $\frac{n-d}{n}a^{n-d-1}$ times in $u$. In particular, if $u$ is a De Bruijn cycle, then each letter appears $a^{n-1}$ times, so the R-$1^{\diam}$ property is the same as the R-1 property. In the following theorem, we show that Theorem \ref{thm:partial_subwords} implies that every upcycle has the R-$1^{\diam}$ property.

	\begin{theorem}\label{thm:equidistribution} Every upcycle has the partial words balance property R-$1^{\diam}$.
	\end{theorem}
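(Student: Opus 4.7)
The plan is to derive the balance property as an immediate corollary of Theorem~\ref{thm:partial_subwords} by specializing the partial subwords distribution property to the case $k=1$. Let $u$ be an upcycle for $\A^n$ with diamondicity $d$, let $\ell \in \A$, and consider $v = \ell \in \A^1$. Every $1$-window of $u$ is just a single character, and such a window covers $\ell$ precisely when it equals $\ell$ (contributing to $\mathcal{C}(u,\ell,0)$, which has $0$ diamonds) or equals $\diam$ (contributing to $\mathcal{C}(u,\ell,1)$, which has $1$ diamond).

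Let $m_\ell$ denote the number of occurrences of $\ell$ as a letter in $u$, and let $D$ denote the total number of diamonds in $u$. Then $|\mathcal{C}(u,\ell,0)| = m_\ell$, while $|\mathcal{C}(u,\ell,1)| = D$, and all other $\mathcal{C}(u,\ell,q)$ are empty since $1$-windows have length $1$. Applying Equation~(\ref{eq:expmultformula}) from Definition~\ref{def:prob_cover} gives $E(u,\ell) = m_\ell + D/a$. By Theorem~\ref{thm:partial_subwords}, this expected multiplicity equals $|u|/a$, so
\[
m_\ell = \frac{|u| - D}{a},
\]
a quantity independent of $\ell$. Hence every letter of $\A$ appears the same number of times in $u$, establishing the R-$1^{\diam}$ property.

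There is no real obstacle; the work is all packed into Theorem~\ref{thm:partial_subwords}, and R-$1^{\diam}$ falls out as the $k=1$ instance. To match the specific count mentioned in the paragraph preceding the theorem, I would plug in $|u| = a^{n-d}$ and $D = (d/n)a^{n-d}$ to obtain the explicit value $m_\ell = \frac{n-d}{n}a^{n-d-1}$ for each $\ell \in \A$.
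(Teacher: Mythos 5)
Your proposal is correct and follows essentially the same route as the paper: both specialize the partial subwords distribution property (Theorem~\ref{thm:partial_subwords}) to $k=1$, separate the contribution of the diamonds (each contributing $1/a$ to $E(u,\ell)$) from the occurrences of the letter $\ell$ itself, and solve for the count $\frac{n-d}{n}a^{n-d-1}$, which is independent of $\ell$. Your version is just slightly more explicit in naming the multisets $\mathcal{C}(u,\ell,0)$ and $\mathcal{C}(u,\ell,1)$.
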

	
	\begin{proof}
		Let $\ell \in \A.$ By Theorem \ref{thm:partial_subwords}, $u$ has the partial subwords distribution property, so $E(u,\ell)=a^{n-d-1}.$ There are exactly $\frac{d}{n}a^{n-d}$ diamonds in $u$, and each of them covers $\ell$ with expected multiplicity $1/a$. Each other contribution to the sum $E(u,w)$ comes from the letter $\ell$ appearing in $u$, so there are exactly
		$$
		a^{n-d-1} - \frac{d}{n} a^{n-d-1} = \frac{n-d}{n}a^{n-d-1}
		$$
		occurrences of $\ell$ in $u$.
	\end{proof}

	Theorem~\ref{thm:equidistribution} does not hold for non-cyclic upwords (which must be binary). For comparison, the binary upword $\diam^{n-1}01^n$ for $\set{0,1}^n$ has $n-1$ more $1$'s than $0$'s. For upcycles, however, Theorem~\ref{thm:equidistribution} implies number-theoretic constraints as the number of occurrences of each letter in an upcycle is an integer. 
	
	\begin{corollary}\label{cor:div}
		If there is an upcycle for $\A^n$ with diamondicity $d$ and $a := \abs{\A}$, then $n \mid d a^{n-d-1}$. In particular, if $\gcd(d,n)=1$, then $n \mid a^{n-d-1}$.
	\end{corollary}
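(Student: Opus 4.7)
The plan is to deduce the divisibility conditions directly from the count established in Theorem~\ref{thm:equidistribution}. That theorem shows that in any upcycle for $\A^n$ with diamondicity $d$, every letter of $\A$ appears exactly $\frac{n-d}{n}a^{n-d-1}$ times. Since this count is by definition a nonnegative integer, the expression $\frac{(n-d)a^{n-d-1}}{n}$ must lie in $\mathbb{Z}$, so $n \mid (n-d)a^{n-d-1}$.

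From there, the first conclusion follows by subtracting a manifest multiple of $n$: since $n \mid n \cdot a^{n-d-1}$, we get
\[
n \;\Big|\; n\cdot a^{n-d-1} - (n-d)a^{n-d-1} \;=\; d\, a^{n-d-1},
\]
which is exactly the desired divisibility. For the second statement, I would invoke the standard fact that if $n \mid dm$ and $\gcd(d,n) = 1$, then $n \mid m$; applying this with $m = a^{n-d-1}$ yields $n \mid a^{n-d-1}$.

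There is essentially no obstacle here, as the entire content is packaged inside Theorem~\ref{thm:equidistribution}: the corollary is a purely arithmetic reformulation. The only thing to be slightly careful about is handling the edge case $d = n-1$ correctly (the exponent $n-d-1$ equals $0$ and $a^{0}=1$, so the condition $n \mid d$ is forced), but Proposition~\ref{prop:upcycle_length} prevents degenerate situations, and in any case the arithmetic argument above goes through without modification.
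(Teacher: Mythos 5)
Your proof is correct and follows essentially the same route as the paper: both arguments observe that Theorem~\ref{thm:equidistribution} forces the integer count $\frac{n-d}{n}a^{n-d-1}$ of occurrences of a letter, and then extract $n \mid d\,a^{n-d-1}$ by elementary arithmetic (the paper writes the count as $a^{n-d-1} - \frac{d}{n}a^{n-d-1}$ and concludes directly; you subtract the multiple $n\cdot a^{n-d-1}$, which is the same step made explicit). The coprimality case is handled identically in both.
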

	
	\begin{proof}
		The number of occurrences of $0$ in the upcycle is the integer $a^{n-d-1} - \frac{d}{n}a^{n-d-1}$ by Theorem~\ref{thm:equidistribution}.
	\end{proof}
	
	When $\gcd(d,n) = 1$, \cite[Theorem 4.11]{G18} implies that $n \mid a^{n-d}$. Thus Corollary~\ref{cor:div} is an improvement by a factor of $a$ in this case.
	
	\subsubsection{Run}\label{subsec:run}
	
	A \emph{run} of length $r$ in a (total) word is a sequence of $r$ consecutive letters of the same character. All De Bruijn cycles satisfy the run property R-2 \cite{Golomb80}. Some sources, such as \cite{GG05}, define runs as maximal, but we do not. 
	
	\begin{example}\label{ex:run0}
		The De Bruijn cycle $(0000100110101111)$ for $\set{0,1}^4$ contains the following numbers of runs of each length $r$ and letter $\ell$.
		
		\begin{center}
			\begin{tabular}{c|cccc}
				Run length: & $1$ & $2$ & $3$ & $4$\\
				\hline
				$\ell=0$ & $8$ & $4$ & $2$ & $1$\\
				$\ell=1$ & $8$ & $4$ & $2$ & $1$
			\end{tabular}
		\end{center}
	\end{example}
	
	To generalize the definition of the run property to accommodate partial words, we define the number of runs using expected multiplicity.
	
	\begin{definition}
		Suppose $u$ is a partial word over an alphabet $\A$, $\ell$ is a letter of $\A$, and $r$ and $q$ are integers. We define the \emph{number of $\ell^r$ runs} (or the \emph{number of length-$r$ runs of $\ell$'s}) in a partial word $u$ as $E(u, \ell^r)$.
	\end{definition}
	
	The number of $\ell^r$ runs in $u$ is not the number of occurrences of the string $\ell^r$, but the expected multiplicity of $\ell^r$ in $u$. Thus we may have non-integral numbers of runs.

	\begin{example}\label{ex:run1}
		The upword $\diam^{n-1}01^n$ for $\set{0,1}^n$ contains exactly $\frac{1}{2^{r-1}} + \frac{n-r}{2^r}$ length $r$ runs of $0$'s and exactly $n-r+1 + \frac{n-r}{2^r}$ length $r$ runs of $1$'s.
		
		For example, taking $n=4$, the upword $\diam\diam\diam01111$ has the following runs.
		
		\begin{center}
			\begin{tabular}{c|cccc}
				Run length $r$: & $1$ & $2$ & $3$ & $4$\\
				\hline
				$\ell=0$ & $5/2$ & $1$ & $3/8$ & $1/8$\\
				$\ell=1$ & $11/2$ & $7/2$ & $17/8$ & $1$
			\end{tabular}
		\end{center}
	\end{example}
	
	\begin{example}\label{ex:run2}
		The upcycle $(001\diam110\diam)$ for $\set{0,1}^4$ contains the following numbers of length-$r$ runs of $\ell$'s.
		
		\begin{center}
			\begin{tabular}{c|cccc}
				Run length $r$: & $1$ & $2$ & $3$ & $4$\\
				\hline
				$\ell=0$ & $4$ & $2$ & $1$ & $1/2$\\
				$\ell=1$ & $4$ & $2$ & $1$ & $1/2$
			\end{tabular}
		\end{center}
	\end{example}
	
	The tables in Examples \ref{ex:run1} and \ref{ex:run2} look quite different. The run lengths of the upcycle in Example~\ref{ex:run2} exhibit a similar pattern to the run property R-2, which we now state formally. Assume that $u$ is an upcycle for $\A^n$ with diamondicity $d$. The following is the \emph{partial words run property.} 
	
	\begin{description}
		
		\item[Partial Words Run (R-$2^{\diam}$)] For every $r \in [n]$ and every $\ell \in \A$, the number of $\ell^r$ runs in $w$ is $a^{n-d-r}$.
		
	\end{description}
	
	Substituting $d=0$ in the R-$2^{\diam}$ property yields the R-$2$ property. To compare the R-$1^{\diam}$ and R-$2^{\diam}$ properties, notice that we have defined a run in a partial word in such a way that the number of runs of length $1$ of a given letter $\ell$ is not equal to the number of occurrences of $\ell$, but to the number of occurrences of $\ell$ plus $1/a$ of the number of occurrences of $\diam$. In an upcycle, by Theorem~\ref{thm:equidistribution}, the number of runs of length $1$ of the letter $\ell$ is $\frac{n-d}{n}a^{n-d-1} + \frac{d}{n}a^{n-d}/a = a^{n-d-1}$, consistent with the partial words run property. 
	
	\begin{theorem}\label{thm:partial_run}
		Every upcycle has the partial words run property R-$2^{\diam}$.
	\end{theorem}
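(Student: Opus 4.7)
The plan is to deduce the partial words run property R-$2^{\diam}$ as an immediate corollary of Theorem~\ref{thm:partial_subwords}. Recall the definition stated just before Theorem~\ref{thm:partial_run}: the ``number of $\ell^r$ runs'' in a partial word $u$ is \emph{declared} to equal the expected multiplicity $E(u, \ell^r)$, not a count of maximal runs (the paper explicitly flags this departure from \cite{GG05}, and indeed emphasizes that runs can be non-integral). Consequently, for an upcycle $u$ with diamondicity $d$, verifying R-$2^{\diam}$ reduces to establishing the single identity
\[
E(u, \ell^r) \;=\; a^{n-d-r}
\]
for all $\ell \in \A$ and all $r \in [n]$. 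There is nothing else to prove once this is shown.

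Theorem~\ref{thm:partial_subwords} delivers exactly this identity in greater generality. It asserts that for any word $v \in \A^k$ with $k \in [n]$, the expected multiplicity of $v$ in an upcycle $u$ of length $|u|=a^{n-d}$ satisfies
\[
E(u, v) \;=\; \frac{|u|}{a^{k}} \;=\; a^{n-d-k}.
\]
Specializing $v$ to the constant word $\ell^r$ (which lies in $\A^r$) and $k$ to $r$ immediately yields $E(u, \ell^r) = a^{n-d-r}$, which is precisely the content of R-$2^{\diam}$. The proof therefore consists of exactly one step: invoke Theorem~\ref{thm:partial_subwords} with $v := \ell^r$ and $k := r$.

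There is no hidden obstacle to overcome. Because the paper's ``number of runs'' is defined via expected multiplicity rather than via maximality, no inclusion--exclusion or boundary-case bookkeeping is needed, and no separate analysis is required to handle windows that contain diamonds versus those that do not --- all such effects are already absorbed into $E(u,\ell^r)$ and already accounted for in the proof of Theorem~\ref{thm:partial_subwords} via the lift to a De~Bruijn cycle. As a sanity check one can confirm the special cases: for $r=1$ the formula gives $a^{n-d-1}$, which on splitting into letter and diamond contributions equals $\tfrac{n-d}{n}a^{n-d-1} + \tfrac{d}{n}a^{n-d}\cdot \tfrac{1}{a} = a^{n-d-1}$, matching the count obtained from Theorem~\ref{thm:equidistribution} in the paragraph preceding the theorem; and for $r=n$ the formula gives $a^{-d}$, consistent with the fact that the unique window covering $\ell^n$ has exactly $d$ diamonds and thus covers $\ell^n$ with expected multiplicity $1/a^d$.
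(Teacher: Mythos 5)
Your proposal is correct and follows exactly the paper's own one-line argument: the paper also proves Theorem~\ref{thm:partial_run} by substituting $k=r$ and $v=\ell^r$ into Definition~\ref{def:psdp} and invoking Theorem~\ref{thm:partial_subwords}. Your additional sanity checks for $r=1$ and $r=n$ are harmless but not needed.
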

	
	\begin{proof}This theorem follows from Theorem \ref{thm:partial_subwords} by substituting $k=r$ and $v=\ell^r$ in Definition \ref{def:psdp}. 
	\end{proof}

	\subsection{Autocorrelation}\label{subsec:autocor}
	
	Autocorrelation measures how similar a cycle is to its cyclic shifts. For this section, we assume that $\A$ is the finite field $\mathbb{F}_{p^k}$ with $p^k$ elements, for $p$ a prime, since both autocorrelation and linear feedback shift registers apply only to this case.
	
	The following definition is useful both in constructing and analyzing De Bruijn cycles.
	
	\begin{definition}\label{def:mod/punc_DB}
		A \emph{modified} (or \emph{punctured}) \emph{De Bruijn cycle} for $\A^n$ is a cycle $\widehat{w}=(w_1 \cdots w_N)$ that covers every word in $\A^n\setminus\set{0^n}$ exactly once and does not cover $0^n$. 
	\end{definition}
	
	If $w=(w_1 \dots w_N)$ is a modified De Bruijn cycle for $\A^n$, then $N = a^n - 1$. The word $0^{n-1}$ appears exactly $a-1$ times in $w$, and adding a $0$ to any one of these instances produces a De Bruijn cycle. Given a De Bruijn cycle $w$, we often talk about its corresponding modified De Bruijn cycle $\widehat{w}$, which is created by deleting a $0$ from the word $0^n$ in $w$. In this case, we say $w$ is generated by an LFSR if $\widehat{w}$ can be generated by an LFSR.
	
	To discuss the R-3 autocorrelation property, we need the following definition, which describes the similarity between a cyclic word and its shift by some integer $\tau$.
	
	\begin{definition}[See Section~5.1.2 in \cite{GG05}]\label{def:autocorrelation}  Let $w=(w_1 \cdots w_N)$ be a cyclic word over $\A = \mathbb{F}_{p^k}$ and $\tau \in \mathbb{Z}$. Let $\xi = e^{2\pi \text{i}/ p}$, a primitive $p^{\mathrm{th}}$ root of unity. The \emph{autocorrelation at shift $\tau$} of $w$ is
		$$
		\displaystyle A(w, \tau)=\sum_{i=1}^{N}\xi^{Tr(w_{i+\tau} - w_i)}.
		$$
		where $Tr(x) = x + x^p + \cdots + x^{p^{k-1}}$, $x \in \mathbb{F}_{p^k}$ is the trace function from $\mathbb{F}_{p^k}$ to $\mathbb{F}_p$. If $A(w,\tau)=-1$ for all nontrivial shifts $\tau$ (that is, $\tau \not \equiv 0$ modulo $N$), then $w$ is called a \emph{perfect autocorrelation sequence}. If $w$ is a De Bruijn cycle and $\widehat{w}$ is a perfect autocorrelation sequence, then we say $w$ has the R-$3$ property.
	\end{definition}
	
	Notice that this definition is distinct from autocorrelation discussed in \cite{GO81}.
	
	\begin{prop}[Property~5.5 in \cite{GG05}]\label{prop:LFSRisR3}
		Let $w$ be a De Bruijn cycle for $\A = \mathbb{F}_{p^k}$. If $w$ is generated by an LFSR, then $\widehat{w}$ is a perfect autocorrelation sequence.
	\end{prop}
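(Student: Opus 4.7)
The plan is to combine two classical facts about LFSR-generated sequences over $\mathbb{F}_{p^k}$. First, I would unpack the hypothesis: for an LFSR to generate a De Bruijn cycle $w$, its characteristic polynomial must be primitive of degree $n$ over $\mathbb{F}_{p^k}$, so that the modified sequence $\widehat{w}$ has maximal period $N = (p^k)^n - 1$. Such a $\widehat{w}$ is an $m$-sequence. The structural fact I would invoke is the shift-and-add property: for every nontrivial shift $\tau$ (that is, $\tau \not\equiv 0 \pmod{N}$), the termwise sequence $(w_{i+\tau} - w_i)_i$ is itself a cyclic shift of $\widehat{w}$. This follows because the solutions of the LFSR recurrence form an $n$-dimensional $\mathbb{F}_{p^k}$-vector space on which the shift map acts as a single $N$-cycle on the nonzero elements (by primitivity), so every nonzero solution is a cyclic shift of $\widehat{w}$; and $(w_{i+\tau}-w_i)_i$ is a nonzero solution by linearity of the recurrence.

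Using shift-and-add, I would rewrite
\[
A(\widehat{w},\tau) \;=\; \sum_{i=1}^{N} \xi^{Tr(w_{i+\tau}-w_i)} \;=\; \sum_{j=1}^{N} \xi^{Tr(w_j)},
\]
since summing $\xi^{Tr(\cdot)}$ over a full period of a cyclic shift of $\widehat{w}$ gives the same value as summing over $\widehat{w}$ itself. Next I would apply the balance property: since $w$ is a De Bruijn cycle, every letter of $\mathbb{F}_{p^k}$ appears exactly $(p^k)^{n-1}$ times in $w$, and forming $\widehat{w}$ by deleting a single $0$ from the window $0^n$ reduces the count of $0$ by $1$. Grouping terms in the character sum by value, this gives
\[
\sum_{j=1}^{N} \xi^{Tr(w_j)} \;=\; \bigl((p^k)^{n-1} - 1\bigr)\,\xi^{Tr(0)} \;+\; (p^k)^{n-1} \!\sum_{a \in \mathbb{F}_{p^k}^{\,*}} \!\xi^{Tr(a)}.
\]
Finally I would invoke the orthogonality identity $\sum_{a \in \mathbb{F}_{p^k}} \xi^{Tr(a)} = 0$, valid because $Tr \colon \mathbb{F}_{p^k}\to\mathbb{F}_p$ is a nonzero $\mathbb{F}_p$-linear map, so $a\mapsto \xi^{Tr(a)}$ is a nontrivial additive character of $\mathbb{F}_{p^k}$. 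This yields $\sum_{a \neq 0} \xi^{Tr(a)} = -1$, and substituting (together with $\xi^{Tr(0)} = 1$) gives $A(\widehat{w},\tau) = (p^k)^{n-1} - 1 - (p^k)^{n-1} = -1$, exactly as required.

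The main obstacle I anticipate is cleanly justifying the shift-and-add property without redeveloping substantial portions of the theory of linear recurring sequences. One clean route is to use the trace representation $w_i = Tr_{\mathbb{F}_{p^{kn}}/\mathbb{F}_{p^k}}(\alpha\beta^i)$ for some nonzero $\alpha \in \mathbb{F}_{p^{kn}}$ and a primitive $\beta \in \mathbb{F}_{p^{kn}}^{\,*}$, whence shift-and-add reduces to the identity $\alpha\beta^i(\beta^\tau - 1) = \alpha'\beta^i$ with $\alpha' \neq 0$ whenever $\tau \not\equiv 0 \pmod{N}$; this sidesteps any direct analysis of the state-space cycle structure and reduces the whole argument to well-known properties of the trace and of primitive elements of finite fields.
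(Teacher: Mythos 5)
The paper does not actually prove this proposition: it is imported by citation as Property~5.5 of \cite{GG05}, so there is no in-paper argument to compare against. Your proposal is the standard textbook proof of that property and it is correct. The three ingredients --- (i) an $n$-stage LFSR whose state sequence visits every nonzero word of $\mathbb{F}_{p^k}^n$ exactly once must have primitive characteristic polynomial, so $\widehat{w}$ is an $m$-sequence; (ii) the shift-and-add property, for which your trace-representation route $\widehat{w}_i = Tr_{\mathbb{F}_{p^{kn}}/\mathbb{F}_{p^k}}(\alpha\beta^i)$ is clean and complete (transitivity of the trace reconciles this with the paper's $Tr:\mathbb{F}_{p^k}\to\mathbb{F}_p$ in Definition~\ref{def:autocorrelation}); and (iii) the balance count $(p^k)^{n-1}-1$ zeros and $(p^k)^{n-1}$ of each nonzero letter in $\widehat{w}$, combined with $\sum_{a\in\mathbb{F}_{p^k}}\xi^{Tr(a)}=0$ --- assemble exactly as you describe to give $A(\widehat{w},\tau)=-1$. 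Two small points worth pinning down in a full write-up: first, that the difference sequence $(\widehat{w}_{i+\tau}-\widehat{w}_i)_i$ is nonzero for nontrivial $\tau$ because otherwise $\tau$ would be a period of $\widehat{w}$, forcing $N\mid\tau$; second, that ``generated by an LFSR'' is being read in the standard way as an $n$-stage register (so that the register states are the $n$-windows of $\widehat{w}$), which is the convention of \cite{GG05} and is what makes step (i) immediate. Neither is a gap in substance.
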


	Let $w=(w_1 \cdots w_N)$ be a cyclic sequence over $\A$ and let $\tau \in \mathbb{Z}$. The set of agreements at shift $\tau$, denoted $\mathbb{A}$, and the set of disagreements at shift $\tau$, denoted $\mathbb{D}$, are defined as follows:
	\begin{align*}
		\mathbb{A} &= \set{i \in [N] : w_{i+\tau} = w_i} \\
		\mathbb{D} &= \set{i \in [N] : w_{i+\tau} \ne w_i}
	\end{align*}
	The number of agreements and number of disagreements at shift $\tau$ are the respective cardinalities of $\mathbb{A}$ and $\mathbb{D}$.
	
	When $a=2$, we have $\xi = -1$, so $A(w, \tau)$ is the number of agreements minus the number of disagreements between $w$ and its cyclic shift by $\tau$. In fact, for non-binary upcycles, we can use a count on the number of agreements to show that a sequence is not a perfect autocorrelation sequence.
	
	\begin{lemma}\label{lem:agreements}
		Let $w = (w_1 \cdots w_N)$ be a cyclic word over $\A = \mathbb{F}_{p^k}$. If there is a nontrivial shift $\tau$ of $w$ that has at least as many agreements as disagreements with $w$, then $w$ does not have the R-3 property.
	\end{lemma}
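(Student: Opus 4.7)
The plan is to split the defining sum for $A(w,\tau)$ according to the partition $[N] = \mathbb{A} \sqcup \mathbb{D}$ and bound the real part of $A(w,\tau)$ from below by $|\mathbb{A}| - |\mathbb{D}|$. The hypothesis forces this lower bound to be nonnegative, which rules out $A(w,\tau) = -1$ and hence rules out the R-3 property.

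First I would observe that for each $i \in \mathbb{A}$, we have $w_{i+\tau} - w_i = 0$, so $\mathrm{Tr}(w_{i+\tau} - w_i) = 0$ and the corresponding term in the defining sum for $A(w,\tau)$ equals $\xi^{0} = 1$. For each $i \in \mathbb{D}$, the term $\xi^{\mathrm{Tr}(w_{i+\tau}-w_i)}$ is a $p$-th root of unity, hence a complex number of modulus $1$, and in particular its real part is at least $-1$. Summing over the two classes,
\[
\mathrm{Re}\bigl(A(w,\tau)\bigr) \;=\; |\mathbb{A}| \;+\; \sum_{i \in \mathbb{D}} \mathrm{Re}\bigl(\xi^{\mathrm{Tr}(w_{i+\tau}-w_i)}\bigr) \;\geq\; |\mathbb{A}| - |\mathbb{D}|.
\]
By hypothesis $|\mathbb{A}| \geq |\mathbb{D}|$, so $\mathrm{Re}(A(w,\tau)) \geq 0 > -1$. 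Therefore $A(w,\tau) \neq -1$, and by Definition~\ref{def:autocorrelation} the cycle $\widehat{w}$ fails to be a perfect autocorrelation sequence, so $w$ does not have the R-3 property.

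There is no real obstacle here; the argument is essentially one application of the triangle inequality after splitting the sum. The only subtlety worth noting is that in odd characteristic an individual disagreement need not contribute $-1$ to $A(w,\tau)$ (as it does in the binary case, where $\xi = -1$), so one cannot simply compute $A(w,\tau) = |\mathbb{A}| - |\mathbb{D}|$ directly. The elementary bound $\mathrm{Re}(\xi^{k}) \geq -1$ for any root of unity is exactly what lets the proof go through uniformly over all finite fields $\mathbb{F}_{p^{k}}$.
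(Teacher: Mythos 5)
Your proof is correct and is essentially identical to the paper's: both split the sum defining $A(w,\tau)$ over the agreement set $\mathbb{A}$ and disagreement set $\mathbb{D}$, use that each agreement contributes $1$ and each disagreement has real part at least $-1$, and conclude $\Re(A(w,\tau)) \ge \abs{\mathbb{A}} - \abs{\mathbb{D}} \ge 0$, so $A(w,\tau) \ne -1$. No further comment is needed.
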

	
	\begin{proof}
		Given $z \in \mathbb{C}$, let $\Re(z)$ be the real part of $z$. Assume $w$ and $\tau$ are as stated above. Then
		\begin{align*}
			\Re\Big(A\left(w,\tau\right)\!\Big) &= \sum_{i=1}^{N}\Re\left(\xi^{Tr(w_{i+\tau}-w_i)}\right) \\
			&= \sum_{i\in \mathbb{A}}\Re\left(\xi^{Tr(w_{i+\tau}-w_i)}\right) + \sum_{i\in \mathbb{D}}\Re\left(\xi^{Tr(w_{i+\tau}-w_i)}\right) \\
			&= \abs{\mathbb{A}} + \sum_{i \in \mathbb{D}}\Re\left(\xi^{Tr(w_{i+\tau}-w_i)}\right) \\
			&\ge \abs{\mathbb{A}} + \sum_{i \in \mathbb{D}}(-1) \\
			&= \abs{\mathbb{A}} - \abs{\mathbb{D}} \ge 0.
		\end{align*}
		Therefore $A(w,\tau) \ne -1$, so $w$ does not have the R-3 property.
	\end{proof}
	
	\begin{theorem}\label{thm:LiftNotR3}
		Let $u$ be an upcycle over $\A^n$ with $\A = \mathbb{F}_{p^k}$ and $d=1$. Then any De Bruijn lift of $u$ does not have the R-3 autocorrelation property.
	\end{theorem}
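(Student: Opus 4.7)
The plan is to invoke Lemma~\ref{lem:agreements} with a carefully chosen nontrivial shift that exploits the periodic structure of the lifting operation. Specifically, I would take the shift $\tau = |u| = a^{n-1}$ and show that at this shift, the modified De Bruijn cycle $\widehat{w}$ agrees with itself at far more positions than it disagrees.

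First I would recall from Definition~\ref{def:fold/lift} and the construction in Theorem~\ref{thm:necklacelift} that a De Bruijn lift $w$ of $u$ is obtained from $u^{(a)}$ by filling in its diamonds. In particular, at every position $i \in [a^n]$ for which the corresponding character of the underlying cyclic word $u^{(a)}$ is a (non-diamond) letter of $\A$, we have $w_i = w_{i+a^{n-1}}$, since those two positions refer to the same fixed letter of $u$. Because $u$ has diamondicity $1$, it contains $\frac{n-1}{n}a^{n-1}$ non-diamond characters, each of which appears in $a$ copies in $u^{(a)}$, yielding at least $\frac{n-1}{n}a^n$ agreements between $w$ and its shift by $\tau = a^{n-1}$. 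Consequently the number of disagreements is at most $a^n - \frac{n-1}{n}a^n = \frac{1}{n}a^n$.

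Since upcycles exist only for $n \geq 4$ by \cite[Proposition~5.1]{G18}, we certainly have $n \geq 2$, so $\frac{n-1}{n}a^n > \frac{1}{n}a^n$ and the shift $\tau = a^{n-1}$ is nontrivial modulo $N = a^n$ because $a \geq 2$. Thus $w$ (and equivalently $\widehat{w}$, after accounting for the single deleted $0$, which changes agreement/disagreement counts by at most a constant absorbed by the gap) has strictly more agreements than disagreements at this shift. I would then need a brief remark that passing from $w$ to the punctured $\widehat{w}$ does not destroy this inequality: since $N$ is large and our surplus of agreements over disagreements is of order $a^n$, the $O(1)$ adjustment from removing a single symbol is negligible.

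The main obstacle I anticipate is precisely this bookkeeping step: the autocorrelation in Definition~\ref{def:autocorrelation} is defined for the \emph{modified} cycle $\widehat{w}$ of length $a^n - 1$, not for $w$ itself, so I need to argue that the agreement surplus I exhibit for $w$ at shift $a^{n-1}$ translates into an agreement surplus for $\widehat{w}$ at an appropriate nontrivial shift. Everything else is a direct count, but verifying that the lemma applies cleanly after puncturing — and choosing the right reformulation so that Lemma~\ref{lem:agreements} can be invoked verbatim — is where care is required.
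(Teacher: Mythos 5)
Your overall strategy is the same as the paper's (apply Lemma~\ref{lem:agreements} with a shift of $\pm a^{n-1}$ and exploit the fact that the non-diamond positions of $u$ force $w_i = w_{i+a^{n-1}}$), but there is a genuine gap in the step you flagged as mere ``bookkeeping.'' Deleting one character to pass from $w$ to $\widehat{w}$ is not an $O(1)$ perturbation of the agreement count. The cycle $\widehat{w}$ has length $N = a^n-1$, so when you compare $\widehat{w}_i$ with $\widehat{w}_{i+\tau}$ (indices mod $N$), exactly $\tau$ of the $N$ comparison pairs wrap around the deletion point and therefore correspond to comparing $w_j$ with $w_{j+\tau+1}$ rather than $w_{j+\tau}$. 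With $\tau = a^{n-1}$ that is $a^{n-1}$ pairs --- a $1/a$ fraction of all positions --- on which the periodicity argument gives you nothing. Your guaranteed agreement count therefore drops from $\tfrac{n-1}{n}a^n$ to roughly $\tfrac{n-1}{n}(a-1)a^{n-1}$, i.e.\ a guaranteed agreement fraction of about $\bigl(1-\tfrac1n\bigr)\bigl(1-\tfrac1a\bigr)$ rather than $1-\tfrac1n$.

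For $a \ge 3$ this weakened bound still exceeds $1/2$, but only after invoking $\gcd(a,n)\ne 1$ and $n\ge 4$ and checking the extremal parameter pairs (the paper verifies $Q(3,6)$ and $Q(4,4)$). For $a=2$ the bound is $\approx \tfrac12\bigl(1-\tfrac1n\bigr) < \tfrac12$ and the argument collapses entirely: Lemma~\ref{lem:agreements} cannot be applied from this count alone. The paper rescues the binary case by finding additional agreements among the indices $3 \le i \le 2^{n-1}+1$, namely one for each occurrence of $00$ or $11$ as a total subword of $u$; counting these via the partial subwords distribution property (Theorem~\ref{thm:partial_subwords}) yields $2^{n-2}\tfrac{n-2}{n}$ extra agreements, which pushes the fraction above $1/2$ for $n\ge 8$, with $n=4$ handled by direct inspection of the unique binary upcycle. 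Your proposal is missing this entire second half of the argument, so as written it does not establish the theorem for $a=2$, which is the case covering all of the explicitly known upcycles.
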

	
	\begin{proof}
		For any De Bruijn lift $w$ of $u$, we use Lemma~\ref{lem:agreements} with $\tau=-a^{n-1}$ (or equivalently $\tau = (a-1)a^{n-1}-1$) to show that the corresponding modified De Bruijn sequence $\widehat{w}$ does not have the R-3 property.
		
		Let $u = (u_1 u_2 \cdots u_{a^{n-1}})$ and assume $0^n$ is covered by $u_1\dots u_n$. Therefore $u_1 \ne \diam$ (otherwise $u_1\cdots u_{n+1} = \diam 0^{n-1} \diam$ would cover $0^n$ twice). Let $w = (w_1 w_2 \dots w_{a^n})$ be any lift of $u$ and assume $w_1\dots w_n = 0^n$. In other words, $w$ is a De Bruijn cycle with the following property: if $u_{i \bmod a^{n-1}} \ne \diam$, then $w_i = u_{i \bmod a^{n-1}}$. 
		Let $\widehat{w} = (w_2 \dots w_{a^n})$, i.e., $\widehat{w}$ is a modified lift of $u$. 
		We will find a lower bound on the percentage of agreements between $\widehat{w}$ and the shift of $\widehat{w}$ by $a^{n-1}$. If we can show that the lower bound is at least $1/2$, then we can apply Lemma~\ref{lem:agreements} to complete the proof. See Table~\ref{fig:compareItoshift} for a comparison of $\widehat{w}$ and its shift by $\tau = -a^{n-1}$.
		
		\begin{table}[h]
			\centering
			\begin{tikzpicture}[overlay]
				\fill[grey,opacity=.2] (-5.88,-1.3) -- (-3.76,-1.3) -- (-3.76,-0.3) -- (-5.88,-0.3) -- (-5.88,-1.3);
				\fill[grey,opacity=.2] (1.23,-1.3) -- (2.64,-1.3) -- (2.64,-0.3) -- (1.23,-0.3) -- (1.23,-1.3);
				\fill[grey,opacity=.2] (2.83,-1.3) -- (4.24,-1.3) -- (4.24,-0.3) -- (2.83,-0.3) -- (2.83,-1.3);
				\fill[grey,opacity=.2] (5.28,-1.3) -- (7.02,-1.3) -- (7.02,-0.3) -- (5.28,-0.3) -- (5.28,-1.3);
			\end{tikzpicture}
			$$
			\begin{array}{rlllllllll}
				\widehat{w}: & (\ w_2 & w_3 & \cdots & w_{a^{n-1}+1} & w_{a^{n-1}+2}  & w_{a^{n-1}+3} & \cdots & w_{a^{n\phantom{-1}}\phantom{(a-1)}}\ ) \\
				\text{Shift:} & (\ w_{(a-1)a^{n-1}+1} & w_{(a-1)a^{n-1}+2} & \cdots & w_{a^n} & w_2 & w_3 & \cdots & w_{(a-1)a^{n-1}}\ )
			\end{array}
			$$
			\caption{Comparing $\widehat{w}$ (first row) to its shift by $\tau=-a^{n-1}$ (second row) in the proof of Theorem~\ref{thm:LiftNotR3}. The highlighted columns correspond to the indices of $I$.}
			\label{fig:compareItoshift}
		\end{table}
		
		Assume $a \ge 3$. We consider the characters $w_i$ with indices in the set $I = \set{i: a^{n-1}+2 \le i \le a^n}\cup\set{2}$, whose cardinality is $(a-1)a^{n-1}$. Since $d=1$, we have $n | a^{n-1}$. Notice that $I\cup\set{1}\setminus\set{2}$ is a cyclically consecutive interval of indices of $w$. By the diamondicity of $u$, precisely $1-1/n$ of the indices $i \in I\cup\set{1}\setminus\set{2}$ have $u_{i \bmod a^{n-1}} \ne \diam$. Since $u_1 \ne \diam$, precisely $|I|(1-1/n) - 1$ of the indices $i \in I\setminus\set{2}$ have $u_{i \bmod a^{n-1}} \ne \diam$, which implies that $w_i = w_{i-{a^{n-1}}}$ as $i \equiv i-a^{n-1} \pmod{a^{n-1}}$, i.e. $w_i$ agrees with its shift backward by $a^{n-1}$. For $i=2$, observe that
		\[
		w_2 = 0 = w_1 = w_{(a-1)a^{n-1}+1},
		\]
		where the last equality is justified by the fact that $u_1 \ne \diam$ and the definition of lift. Thus, for at least $(1-1/n)(a-1)a^{n-1}$ indices $i \in I$, $w_i$ agrees with its shift backward by $a^{n-1}$.

		Recalling that the length of $\widehat{w}$ is $a^n-1$, we have shown that
		$$
		Q(a,n) = \frac{a^{n-1}(a-1)\parens{1-\frac{1}{n}}}{a^n-1}
		$$
		is a lower bound on the percentage of agreements between $\widehat{w}$ and its shift by $\tau=-a^{n-1}$.
		
		Observe that $Q(3,6) = 405/728$ and $Q(4,4) = 144/255$. Both of these values are larger than $1/2$, and $Q(a,n)$ increases if we increase $a$ or $n$ from these values (we can see this by differentiating with respect to either $a$ or $n$). 
		Recall that for any upcycle, $\gcd(a,n) \ne 1$ by \cite[Proposition~16]{CKMS17}, and if $a \ge 3$, then $n \ge 4$ by \cite[Proposition~5.1]{G18}. Thus, if an upcycle exists for some $a\ge 3$ and $n$, we know that either $a = 3$ and $n \ge 6$ or $a \ge 4$ and $n \ge 4$. (In fact, using Corollary \ref{cor:frame3} in the next section, it would have been sufficient to check the values of $Q(3,9)$ and $Q(4,4)$.) This establishes the desired result for $a\ge 3$.
		
		Now assume $a = 2$. The above considerations for $w_i$ where $i=2$ and $i \ge 2^{n-1}+2$ still hold, thus the expression $Q(2,n)$ given above is a lower bound on the percentage of agreements between $\widehat{w}$ and its shift by $\tau = 2^{n-1}$. However $Q(2,n)$ is not greater than $1/2$, so we must improve this lower bound. 
		
		We now consider the characters $w_i$ for $3 \le i \le 2^{n-1}+1$, i.e., the characters we had not considered in deducing $Q(a,n)$ above. Note that if $u_{i-1}u_{i} = \ell \ell$ for $\ell \in \set{0,1}$, then $w_i = w_{2^{n-1}+(i-1)}$ (recall that indices of $u$ are taken modulo $2^{n-1}$, so $u_{2^{n-1}+1}$ is $u_1$). In other words, for every total word $00$ or $11$ in the upcycle $u$ (except for at most one occurrence at $u_1u_2$, which is outside the interval of $i$ values under consideration), we get an agreement between $\widehat{w}$ and its shift by $\tau=-2^{n-1}$ (when considering the indices $3 \le i \le 2^{n-1}+1$).
		
		Thus we will count the number of times $00$ and $11$ appear as (total) subwords of $u$. 
		Recall that $E(u,v)$ is the expected multiplicity of $v$ in $u$, and $\mathcal{C}(u,v,q)$ is the multiset of consecutive substrings of $u$ of length $\abs{v}$ that cover $v$ and contain exactly $q$ diamonds. By Theorem \ref{thm:partial_subwords}, $u$ has the partial subwords distribution property, so
		\begin{equation}
			E(u,00) + E(u,11) = 2^{n-3} + 2^{n-3} = 2^{n-2}.
		\end{equation}
		Expressed differently, by Equation (\ref{eq:expmultformula}), we have
		\begin{align}
			E(u,00) &= \abs{\mathcal{C}(u,00,0)} + \frac{\abs{\mathcal{C}(u,00,1)}}{2} \\
			E(u,11) &= \abs{\mathcal{C}(u,11,0)} + \frac{\abs{\mathcal{C}(u,11,1)}}{2}
		\end{align}
		Observe that the number of times that $00$ and $11$ appear as (total) subwords of $u$ is exactly the quantity $\abs{\mathcal{C}(u,00,0)}+\abs{\mathcal{C}(u,11,0)}.$ There are exactly $2^{n-1}/n$ diamonds in $u$, from which we can deduce that
		\begin{equation}
			\abs{\mathcal{C}(u,00,1)} + \abs{\mathcal{C}(u,11,1)} = 2 \cdot \frac{2^{n-1}}{n},
		\end{equation}
		since each diamond in $u$ adds two elements in total to these sets. 
		Combining the numbered equations from above, we get that
		\begin{align*}
			\abs{\mathcal{C}(u,00,0)}+\abs{\mathcal{C}(u,11,0)} &= E(u,00) + E(u,11) - \frac{\abs{\mathcal{C}(u,00,1)}+\abs{\mathcal{C}(u,11,1)}}{2} \\
			&= 2^{n-2} - \frac{2^{n-1}}{n} \\
			&= 2^{n-2} \parens{ \frac{n-2}{n} }.
		\end{align*}
		
		Thus we have found $2^{n-2} \parens{\frac{n-2}{n}}$ occurrences of $00$ or $11$ in $u$, which contribute at least $2^{n-2} \parens{\frac{n-2}{n}}$ - 1 additional indices $3 \le i \le 2^{n-1}+1$ for which $\widehat{w}$ and its shift by $\tau$ agree. 
		Adding these to our numerator of the original estimate $Q(2,n)$ for the percentage of agreements, we get $R(n)$, which is the following improved lower bound on this percentage: 
		$$
		R(n) = \frac{2^{n-1}\parens{1-\frac{1}{n}}+2^{n-2} \parens{\frac{n-2}{n}} - 1}{2^n-1}.
		$$
		From Corollary~\ref{cor:div}, we must have $n=2^k$ with $k\ge 2$ when $a=2$. By Example \ref{ex:a=2,n=4,uniqueupcycle} there is exactly one upcycle for $n=4$ up to symmetries, which is $(0\diam001\diam11)$. This upcycle has exactly two distinct lifts (see Examples \ref{ex: lifts} and \ref{ex: liftsEulerian}), and it is straightforward to verify that neither of these have the R-3 property. 
		So we may assume that $n \ge 8$ in this case. Observe that $R(8) = 53/85$, and $R(n)$ increases as $n$ increases. Therefore we have established the desired result for $a=2$.
	\end{proof}
	
	The following result can be proved directly using the definition of LFSRs, but we note that it follows immediately from the above theorem.
	
	\begin{corollary}\label{cor:noLFSRs}
		Let $u$ be an upcycle with $d=1$ for $\A^n$ where $\A = \mathbb{F}_{p^k}$. No De Bruijn lift of $u$ can be obtained by an LFSR.
	\end{corollary}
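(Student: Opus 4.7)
The plan is to prove this corollary as a direct consequence of Theorem~\ref{thm:LiftNotR3} and Proposition~\ref{prop:LFSRisR3}, combined via the contrapositive. The statement is essentially a repackaging of the autocorrelation result into the language of LFSRs, so the argument will be very short.

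First, suppose for contradiction that $w$ is a De Bruijn lift of $u$ that can be generated by an LFSR. Recall that by the convention established just after Definition~\ref{def:mod/punc_DB}, ``$w$ is generated by an LFSR'' means that the corresponding modified De Bruijn cycle $\widehat{w}$ (obtained by deleting a $0$ from the unique run $0^n$) is generated by an LFSR. Then Proposition~\ref{prop:LFSRisR3} applies and yields that $\widehat{w}$ is a perfect autocorrelation sequence, which by Definition~\ref{def:autocorrelation} is exactly the statement that $w$ has the R-3 property.

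On the other hand, Theorem~\ref{thm:LiftNotR3} asserts that since $u$ is an upcycle over $\A^n = \mathbb{F}_{p^k}^n$ with diamondicity $d=1$, no De Bruijn lift of $u$ satisfies the R-3 property. In particular, our chosen $w$ does not have R-3, contradicting the conclusion of the previous paragraph. Hence no such $w$ exists.

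I do not anticipate any real obstacle: the hard work has already been done in Theorem~\ref{thm:LiftNotR3}, and Proposition~\ref{prop:LFSRisR3} is cited directly from \cite{GG05}. The only minor care required is to make explicit the identification between ``$w$ is generated by an LFSR'' and ``$\widehat{w}$ is generated by an LFSR,'' which is handled by the convention stated immediately after Definition~\ref{def:mod/punc_DB}. Thus the corollary follows in essentially two lines.
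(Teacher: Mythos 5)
Your proposal is correct and matches the paper's proof, which likewise combines Theorem~\ref{thm:LiftNotR3} with Proposition~\ref{prop:LFSRisR3} (stated there directly via the contrapositive rather than by contradiction, an immaterial difference). Your extra care about the convention identifying ``$w$ is generated by an LFSR'' with ``$\widehat{w}$ is generated by an LFSR'' is a reasonable clarification but not a departure from the paper's argument.
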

	\begin{proof}
		By Theorem~\ref{thm:LiftNotR3}, no De Bruijn lift of $u$ has the R-$3$ property. Thus by Proposition~\ref{prop:LFSRisR3}, no De Bruijn lift of $u$ can be generated by an LFSR.
	\end{proof}
	
	It is unclear what the most useful extension of the R-3 property to upcycles might be. One option would be to say that an upcycle has the R-$3^{\diam}$ property if some (or perhaps all) of its De Bruijn lifts have the R-3 property. Another would be to modify the definition of autocorrelation to sum over only the positions $i$ where both $w_i$ and $w_{i+\tau}$ are not diamonds. Yet another option could be to consider some sort of expected value of autocorrelation, similar to what is contained in the definitions of the R-$1^{\diam}$ and R-$2^{\diam}$ properties.
	
	There are other pseudorandomness measures including \emph{discrepancy} (see, e.g., \cite{CH10,GS22}). One might consider what an appropriate extension of discrepancy to partial words might be or determine the discrepancy of lifts of known upcycles.
	
	\section{Nonexistence}\label{sec:nonexist}

	In \cite{G18} the authors give several number-theoretic conditions restricting the existence of upcycles for certain values of $(a,n,d)$. In this section we extend these results, and in addition we show that for several triples of $(a,n,d)$ for small $n$, no upcycles exist.
	
	\begin{definition}\label{def:nonexistence}
		Given a window, let $f \in \set{\bull,\diam}^n$ be its frame. The \emph{pane frame} $p$ of the window, or of the window frame, is the shortest word such that $p^s = f$ for some integer $s$. A word $w$ has \emph{period} $p$ if $w_j = w_k$ for all $j \equiv k \pmod{p}$. The \emph{frame period} of an upcycle, a window, or a window frame, is the minimum period of the frame of the upcycle, window, or window frame, respectively. Equivalently, the frame period is the length of a pane frame of a window.
	\end{definition}
	
	Theorem 4.11 of \cite{G18} shows that the frame period $m$ divides $n$, so any window frame of $u$ has exactly $n/m$ identical pane frames. Note that when diamondicity equals $1$, the frame period must equal $n$.
	
	The diamondicity property of upcycles implies that each upcycle has a unique window frame up to cyclic shifts. With this terminology, we can write a version of Theorem 4.11 from \cite{G18} as follows.
	
	\begin{theorem}[Theorem~4.11 in {\cite{G18}}]\label{thm:olddiv}
		Let $u$ be an upcycle for $\A^n$ with $a = \abs{\A}$ and diamondicity $d$. Let $m$ be the frame period of a window of $u$. Then $m \mid \gcd(a^{n-d},n)$.
	\end{theorem}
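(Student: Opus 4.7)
The plan is to prove the two divisibilities $m \mid n$ and $m \mid a^{n-d}$ separately, which together give $m \mid \gcd(a^{n-d}, n)$.

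The first divisibility is immediate from the definition of the pane frame. If $p$ is a pane frame of a window $f$ with $\abs{p} = m$, then there exists an integer $s \geq 1$ with $p^s = f$, and since $\abs{f} = n$ we get $sm = n$, so $m \mid n$.

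For the second divisibility, let $N = a^{n-d}$ and let $F$ be the frame of $u$, viewed as a cyclic word of length $N$ over $\{\bull, \diam\}$. I would first invoke \cite[Lemma~14]{CKMS17}, already cited in the preliminaries, which says the diamonds of $u$ are $n$-periodic: $F(k+n) = F(k)$ for all $k \in \Z/N\Z$. This implies every window frame is a cyclic shift of $f = p^s$, and since a cyclic shift of a power $p^s$ is of the form $(p')^s$ for some cyclic shift $p'$ of $p$, every window has pane length $m$. I would then argue that $F(k+m) = F(k)$ for all $k \in \Z/N\Z$: whenever both $k$ and $k+m$ lie in a single window, they occupy the same column of that window's pane, so their frame values agree; when $k+m$ crosses the end of a window, I would reduce to a single window using the $n$-periodicity of $F$ together with $m \mid n$.

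Finally, knowing $m$ is a cyclic period of $F$ on $\Z/N\Z$, I would deduce $m \mid N$ by leveraging the minimality of $m$ as a pane length. Since $m$ is a cyclic period, so is $g := \gcd(m, N)$. Restricted to the first window, $F$ then has linear period $g$; since $g \mid m \mid n$ gives $g \mid n$, it follows that $f = q^{n/g}$ where $q$ is the length-$g$ prefix of $f$. This exhibits a pane of length $g$, so by the minimality of the pane length $m$ we have $m \leq g$. Combined with $g \mid m$, this forces $g = m$, hence $m \mid N = a^{n-d}$. The main obstacle I anticipate is the cyclic-period step: carefully justifying $F(k+m) = F(k)$ at positions where $k+m$ wraps across a window boundary, and then exploiting the minimality of $m$ as a pane length (rather than as an arbitrary period, which is subtler by Fine--Wilf-type phenomena) to close the argument.
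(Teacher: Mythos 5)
This theorem is imported verbatim from \cite{G18} (Theorem~4.11 there) and the paper gives no proof of it, so there is nothing in-paper to compare your argument against; I can only assess it on its own terms, and it is correct and self-contained given two facts the paper does supply: the $n$-periodicity of the diamonds (\cite[Lemma~14]{CKMS17}, restated in the preliminaries) and the fact that an upcycle for $\A^n$ with diamondicity $d$ has length $N=a^{n-d}$. Two remarks. First, your case split for establishing $F(k+m)=F(k)$ is unnecessary: for $m<n$ the positions $k$ and $k+m$ are the first and $(m+1)$st characters of the window starting at $k$, whose frame is a cyclic shift of $p^{n/m}$ and hence itself a power of a length-$m$ word, so no ``wrap across a window boundary'' ever occurs; for $m=n$ the claim is just $n$-periodicity. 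Second, your reading of $m$ as the pane-frame length (rather than the minimum period in the congruence sense) is legitimate, since Definition~\ref{def:nonexistence} asserts the two are equivalent, and your closing step --- $g:=\gcd(m,N)$ is again a cyclic period of $F$, it divides $n$ because $g\mid m\mid n$, so the length-$g$ prefix of $f$ is a pane, whence $m\le g$ by minimality and thus $m=g\mid N$ --- is exactly the right way to exploit minimality of the pane length while sidestepping the Fine--Wilf issues you flag. The proof is complete.
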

	
	We introduce the concept of a curtained window, which we will use to prove new nonexistence results.
	
	\begin{definition}\label{def:drapery}
		Let $f \in \set{\bull, \diam}^n$. If there exists a $k \in [n]$ such that for each $i \in [k]$, either $f_i = \diam$ or $f_{n-k+i} = \diam$, then $f$ is \emph{$k$-curtained}. If $f$ is $k$-curtained for some $k \in [n]$, then we say $f$ is \emph{curtained}. Similarly, a partial word is \emph{($k$-)curtained} if its frame is ($k$-)curtained.
	\end{definition}
	
	For example, the frame $\bull \diam \bull \bull \diam \bull$ is $2$-curtained, the frame $\bull \diam \diam \bull \bull$ is $3$-curtained, and the frame $\bull \diam \diam \bull \bull \bull$ is not curtained. In particular, any frame that begins or ends with a $\diam$ character is $1$-curtained. 
	
	In Theorem \ref{thm:windowframedrapery}, we show that some window frame of a nontrivial upcycle must not be curtained (in particular the frame of the window covering $0^n$ is not curtained). This is the main result that allows us to prove several nonexistence results for upcycles, and to provide upper bounds on the diamondicity of upcycles throughout this section.
	
	\begin{theorem}\label{thm:windowframedrapery}
		If a nontrivial upcycle has window frame $f$, then some cyclic shift of $f$ is not curtained. In particular, for any $\ell \in \A$, the window frame $f'$ covering the constant word $\ell^n$ is not curtained. 
	\end{theorem}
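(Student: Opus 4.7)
The plan is to prove the ``in particular'' statement, which immediately implies the first assertion since every window frame is a cyclic shift of the pane frame. Fix $\ell \in \A$ and let $w = u_i u_{i+1}\cdots u_{i+n-1}$ denote the unique window covering $\ell^n$, with frame $f'$, so that every $u_{i+j-1}\in\{\diam,\ell\}$. I first show that $f'_1 = f'_n = \bull$: if $u_{i-1}\in\{\diam,\ell\}$, then $u_{i-1}u_i\cdots u_{i+n-2}$ would be a second window covering $\ell^n$, so $u_{i-1}$ is a letter different from $\ell$; analogously for $u_{i+n}$. The $n$-periodicity of diamond positions (\cite[Lemma~14]{CKMS17}) then forces $u_{i+n-1}$ and $u_i$ to be letters, and being in $w$ they must equal $\ell$, giving $f'_1 = f'_n = \bull$.

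Now suppose for contradiction that $f'$ is $k$-curtained for some $k \in [n]$. The cases $k=1$ and $k=n$ are immediate from $f'_1 = f'_n = \bull$: the former would require $f'_1 = \diam$ or $f'_n = \diam$, and the latter would force $f' = \diam^n$. For $k \in [2, n-1]$ I consider the shifted window $w^+ = u_{i-k}u_{i-k+1}\cdots u_{i-k+n-1}$; Proposition~\ref{prop:upcycle_length} gives $|u| > n > k$, so $w \ne w^+$. The contradiction will come from exhibiting a word $y \in \A^n$ covered by both $w$ and $w^+$.

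Define $y$ by $y_p = \ell$ whenever $f'_p = \bull$, and at diamond positions of $f'$ set $y_p = w^+_p$ if $w^+_p$ is a letter and $y_p = \ell$ otherwise. Then $w$ covers $y$ automatically. To verify that $w^+$ also covers $y$, I will use $n$-periodicity of the frame to identify the frame of $w^+$ at position $p$ as $f'_{n-k+p}$ for $p\in[1,k]$ and as $f'_{p-k}$ for $p\in[k+1,n]$; in the latter range $w^+_p = w_{p-k}$. A conflict can only arise at a position $p$ where $w^+_p$ is a letter and $f'_p = \bull$. For $p \in [1,k]$ this would require $f'_p = f'_{n-k+p} = \bull$, which is exactly what the $k$-curtained hypothesis rules out; for $p \in [k+1,n]$ it forces $f'_{p-k} = \bull$, so $w_{p-k} = \ell$ and therefore $w^+_p = w_{p-k} = \ell = y_p$. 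Hence $w^+$ covers $y$, contradicting the defining property of an upcycle.

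The key step is identifying the backward shift by $k$: it places the $n-k$ positions of overlap with $w$ at the end of $w^+$, so $w^+$ inherits $w$'s $\ell$-values there via the identity $w^+_p = w_{p-k}$, while the $k$ new positions at the front of $w^+$ have frames drawn from the ``other half'' of $f'$. The $k$-curtained hypothesis is then precisely the statement that these two halves never share a bullet at a paired position, which is exactly what is needed to kill every potential conflict between $w$ and $w^+$ on a common covered word.
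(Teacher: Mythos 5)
Your proof is correct and is essentially the paper's argument in mirror image: where the paper compares the window covering $0^n$ with its forward shift by $k$ and exhibits a (partial) word $q$ covered by both, you compare the window covering $\ell^n$ with its backward shift by $k$ and exhibit a total word $y$ covered by both, with the $k$-curtain condition and the $n$-periodicity of the diamonds resolving the conflicts in exactly the same way. The preliminary observation that $f'_1=f'_n=\bull$ and the separate treatment of $k\in\{1,n\}$ are correct but not needed, since your general construction already handles those cases.
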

	
	\begin{proof}
		Let $u$ be a nontrivial upcycle for $\A^n$. Without loss of generality assume $u_1 \dots u_n$ covers $0^n$. We prove by contradiction that the window frame $f'$ covering $0^n$ is not curtained. In particular, we assume that $f'$ is $k$-curtained for some $k\leq n$.
		
		Consider the partial word $q=q_1\ldots q_n$ defined as follows.
		$$
		q_i = \begin{cases}
			0       & \text{if}~ u_i = 0 \\
			u_{i+k} & \text{if}~ u_i = \diam.
		\end{cases}
		$$
		It is immediate to see that $u_1 \dots u_n$ covers $q$. We now show that $u_{1+k} \dots u_{n+k}$ covers $q$. By Proposition~\ref{prop:upcycle_length}, these two windows are distinct, which contradicts that $u$ is an upcycle.
		
		Since $u_1\cdots u_n$ covers $0^n$, the first $n-k$ characters of $u_{1+k} \dots u_{n+k}$ cover $0^{n-k}$. Thus, for $1\leq i\leq n-k$, either $q_i=0$, which is covered by $u_{i+k}$, or $q_i=u_{i+k}=\diam=u_i$, so we have that the first $n-k$ characters of $q$ are covered by the first $n-k$ characters of $u_{1+k} \dots u_{n+k}$.
		
		We must now show that the last $k$ characters of $u_{1+k} \dots u_{n+k}$ cover the last $k$ characters of $q$. In other words, we need to show that $u_{n+j}$ covers $q_{n-k+j}$ for each $1\le j \le k$. By definition of $q$, notice that
		$$
		q_{n-k+j} = \begin{cases}
			0       & \text{if}~ u_{n-k+j} = 0 \\
			u_{n+j} & \text{if}~ u_{n-k+j} = \diam
		\end{cases}
		$$
		thus we only need to show that $q_{n-k+j}$ is covered by $u_{n+j}$ when $q_{n-k+j} = u_{n-k+j}=0$. Since the frame of $u_1 \dots u_n$ is $k$-curtained, if $u_{n-k+j}=0$ then $u_{j}=\diam$. Since the diamonds of $u$ are $n$-periodic, this implies that $u_{n+j}=\diam$. This shows that the remaining characters of $q$ are covered by the remaining characters of $u_{1+k} \dots u_{n+k}$. Thus any total word covered by $q$ is covered multiple times by the upcycle.
	\end{proof}
	
	\begin{example}\label{ex:a=2,n=4,uniqueupcycle}
		Up to reversal and complementation, $(001\diam110\diam)$ is the only upcycle for $\set{0,1}^4$.
	\end{example}
	
	\begin{proof}
		Suppose $u$ is an upcycle for $\set{0,1}^4$. By (the proof of) Theorem \ref{thm:windowframedrapery}, the window of $u$ that covers the word $0^4$ is not curtained, so it does not start or end with $\diam$, and it is not $\bull\diam\diam\bull$. Therefore the frame of the window of $u$ covering $0^4$ is $\bull\diam\bull\bull$ or its reverse $\bull\bull\diam\bull$. Then the length of $u$ is $2^{4-1} = 8$, and the window covering the word $1^4$ does not overlap with that covering $0^4$, so $u$ covers $0^41^4$ with the frame $\bull\diam\bull\bull\bull\diam\bull\bull$ or its reverse $\bull\bull\diam\bull\bull\bull\diam\bull$.
	\end{proof}
	
	\begin{lemma}\label{lem:remove_m}
		Let $p \in \set{\bull,\diam}^\ell$ and $f = p^s \in \set{\bull,\diam}^n$ for some positive integer $s$. For $k \in [\ell]$, $p$ is $k$-curtained if and only if $f$ is $k$-curtained.
	\end{lemma}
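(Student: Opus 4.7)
The plan is to reduce the statement to a direct comparison of indices, using the fact that $f = p^s$ means $f$ is obtained by concatenating $s$ copies of $p$. First I would unpack Definition~\ref{def:drapery} for both $p$ (of length $\ell$) and $f$ (of length $n = s\ell$): $p$ is $k$-curtained iff for every $i \in [k]$, $p_i = \diam$ or $p_{\ell - k + i} = \diam$, and $f$ is $k$-curtained iff for every $i \in [k]$, $f_i = \diam$ or $f_{n - k + i} = \diam$. Since the two conditions have the same quantifier structure, the whole lemma will come down to matching up the relevant characters of $p$ with those of $f$.

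Next I would use periodicity: because $f = p^s$, we have $f_j = p_{((j-1) \bmod \ell) + 1}$ for every $j \in [n]$. The two indices appearing in the condition for $f$ are $i$ and $n - k + i$ with $i \in [k]$. Since the hypothesis guarantees $k \in [\ell]$, we have $i \leq k \leq \ell$, so $f_i = p_i$. For the second index, note $n - k + i = s\ell - k + i$, which is congruent to $\ell - k + i$ modulo $\ell$; and since $1 \leq i \leq k \leq \ell$, the value $\ell - k + i$ lies in $[\ell - k + 1, \ell] \subseteq [\ell]$, so no further reduction is needed. Hence $f_{n - k + i} = p_{\ell - k + i}$.

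Putting these two identifications together, the condition ``$f_i = \diam$ or $f_{n - k + i} = \diam$'' becomes exactly ``$p_i = \diam$ or $p_{\ell - k + i} = \diam$''. Quantifying over $i \in [k]$ on both sides gives that $f$ is $k$-curtained if and only if $p$ is $k$-curtained, which is the desired conclusion.

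There is no real obstacle here; the only thing to be careful about is verifying that the hypothesis $k \in [\ell]$ ensures both index computations stay within the natural range $[\ell]$ of $p$, so that no modular reduction beyond the obvious one is required. Once that is checked, the proof is essentially a one-line identification of corresponding entries.
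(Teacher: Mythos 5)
Your proof is correct and matches the paper's argument: both establish the identifications $f_i = p_i$ and $f_{n-k+i} = p_{\ell-k+i}$ for $i \in [k]$ and observe that the two curtaining conditions then coincide. The paper phrases this via "the first and last $\ell$ characters of $f$ are $p$" while you phrase it via periodicity and modular reduction, but the substance is identical.
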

	
	\begin{proof}
		Let $p=p_1 \dots p_{\ell}$. As $p^s = f$, the first and last $\ell$ characters of $f$ are $p_1 \dots p_{\ell}$. Thus, for each $i \in [k]$ we have $f_i = p_i$ and $f_{n-k+i} = p_{\ell-k+i}$, so $p$ has a $k$-curtain if and only if $f$ has a $k$-curtain.
	\end{proof}

	\begin{corollary}\label{cor:windowpanedrapery}
		If a nontrivial upcycle has window frame $f$ and $f = p^s$ for some word $p$ and positive integer $s$, then some cyclic shift of $p$ is not curtained. In particular, some cyclic shift of the pane frame is not curtained.
	\end{corollary}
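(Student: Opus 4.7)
The plan is to chain Theorem~\ref{thm:windowframedrapery} with Lemma~\ref{lem:remove_m}, using the key observation that every cyclic shift of a power $p^s$ is itself a power of (a cyclic shift of) $p$. First I would invoke Theorem~\ref{thm:windowframedrapery} to fix a cyclic shift $f'$ of $f$ which is not $k$-curtained for any $k \in [n]$. Writing $\ell = |p|$ and $f = p_1 p_2 \cdots p_\ell \, p_1 p_2 \cdots p_\ell \, \cdots \, p_1 p_2 \cdots p_\ell$, any cyclic shift of $f$ by $j$ positions equals $(p')^s$, where $p'$ is the cyclic shift of $p$ by $j \bmod \ell$ positions. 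So $f' = (p')^s$ for some cyclic shift $p'$ of $p$.

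Next I would apply the contrapositive of Lemma~\ref{lem:remove_m} to the pair $(p', f')$: since $f'$ is not $k$-curtained for any $k \in [\ell] \subseteq [n]$, the word $p'$ is not $k$-curtained for any $k \in [\ell]$. Because $|p'| = \ell$, the definition of curtained only allows $k \in [\ell]$, so $p'$ is not curtained at all. Thus the cyclic shift $p'$ of $p$ is not curtained, which is exactly what the corollary asserts.

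For the ``in particular'' clause, when $p$ is taken to be the pane frame of the window, the defining property of the pane frame is precisely that $f = p^s$ for the appropriate integer $s$, so the argument above applies verbatim.

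There is no real obstacle here beyond keeping the indexing straight: the two ingredients (Theorem~\ref{thm:windowframedrapery} and Lemma~\ref{lem:remove_m}) combine almost immediately once one notes that cyclic shifts of $f = p^s$ are powers of cyclic shifts of $p$. The only point worth stating carefully in the write-up is why ``not curtained'' for $p'$ requires only ruling out $k \in [\ell]$ rather than $k \in [n]$, which follows directly from Definition~\ref{def:drapery} applied to the length-$\ell$ word $p'$.
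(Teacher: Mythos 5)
Your proposal is correct and uses exactly the same two ingredients as the paper's proof (Theorem~\ref{thm:windowframedrapery} and Lemma~\ref{lem:remove_m}, together with the observation that a cyclic shift of $p^s$ is a power of a cyclic shift of $p$); the paper merely phrases the argument as a proof by contradiction where you argue directly via the contrapositive of the lemma. Your added care about why ruling out $k \in [\ell]$ suffices for the length-$\ell$ word $p'$ is a worthwhile detail that the paper leaves implicit.
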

	
	\begin{proof}
		Let $w$ be an upcycle for $\A^n$, and let $f$ be a window frame of $w$ and $f=p^s$. Suppose for the sake of contradiction that every cyclic shift of $p$ is curtained. Then by Lemma~\ref{lem:remove_m}, every cyclic shift of $f$ is curtained, so, by Theorem \ref{thm:windowframedrapery}, $f$ is not a window frame of an upcycle.
	\end{proof}
	
	If $f\in \{\bull,\diam\}^n$ consists of all diamonds, then $f$ is $k$-curtained for all $k\in [n]$. It is clear that there is some minimum $d$ such that each $f \in \{\bull,\diam\}^n$ with at least $d$ diamonds is curtained. This idea can be used to provide a bound on the diamondicity of upcycles for $\A^n$, which motivates the following definition and subsequent results.
	
	\begin{definition}\label{def:D(n)}
		Define $D(n)$ to be the minimum value of $d$ such that every word in $\set{\bull,\diam}^n$ containing at least $d$ diamonds is curtained.
	\end{definition}
	
	\begin{lemma}\label{lem:diamdrapepane}
		Let $n$ and $s$ be positive integers. Every upcycle with a  window frame $f$ that can be written as $f = p^s$ for some word $p$, has diamondicity at most $(D(n/s)-1)s$. In particular, every upcycle has diamondicity at most $D(n)-1$.
	\end{lemma}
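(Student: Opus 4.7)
The plan is to reduce the lemma directly to Corollary~\ref{cor:windowpanedrapery} together with the definition of $D$. Since $f=p^s$ has length $n$, the word $p$ has length $n/s$ (in particular, $s\mid n$), and the diamondicity of the upcycle equals the number of diamonds in $f$, which is exactly $s$ times the number of diamonds in $p$. So it suffices to bound the diamond count of $p$ above by $D(n/s)-1$.

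First I would invoke Corollary~\ref{cor:windowpanedrapery}: since $f$ is the window frame of a (necessarily nontrivial) upcycle and $f=p^s$, some cyclic shift $p'$ of $p$ is not curtained. Next I would observe that cyclic shifts preserve the number of diamonds, so $p$ and $p'$ have the same number of diamonds, call it $e$. By the definition of $D(n/s)$ (Definition~\ref{def:D(n)}), every word in $\{\bull,\diam\}^{n/s}$ with at least $D(n/s)$ diamonds is curtained. Contrapositively, since $p'$ is not curtained, $e \le D(n/s)-1$. Therefore the total number of diamonds in $f=p^s$ is $se \le (D(n/s)-1)s$, which is the claimed bound on the diamondicity.

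For the ``in particular'' statement, I would simply apply the general bound with $s=1$: every upcycle has window frame $f$, and we may take $p=f$ and $s=1$, giving diamondicity at most $D(n)-1$. (Note that by Proposition~\ref{prop:upcycle_length}, a De Bruijn cycle is excluded from the hypothesis of Theorem~\ref{thm:windowframedrapery}, but its diamondicity is $0\le D(n)-1$ trivially as long as $D(n)\ge 1$, so the bound still holds in that case.)

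The argument is essentially a one-line consequence of the tools already established; there is no real obstacle beyond being careful that $|p|=n/s$ is an integer (guaranteed by the hypothesis $f=p^s$) and that the cyclic-shift reduction in Corollary~\ref{cor:windowpanedrapery} lets us compare diamond counts between $p$ and a not-curtained cyclic shift without loss.
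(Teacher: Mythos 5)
Your proposal is correct and follows essentially the same route as the paper's proof: apply Corollary~\ref{cor:windowpanedrapery} to get a non-curtained cyclic shift of the pane frame $p$, conclude from Definition~\ref{def:D(n)} that $p$ has fewer than $D(n/s)$ diamonds, and multiply by $s$. Your extra remark handling the trivial (De Bruijn) case is a small point of care the paper leaves implicit, but it does not change the argument.
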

	
	\begin{proof}
		Let $w$ be an upcycle for $\A^n$ with frame period $m$, and let $f=p^s$ be a window frame of $w$. 
		By Corollary~\ref{cor:windowpanedrapery}, some cyclic shift of $p$ is not curtained. Hence, $p$ has fewer than $D(n/s)$ diamonds (since $p$ has length $n/s$), and hence $w$ has diamondicity at most $(D(n/s) - 1)s$.
	\end{proof}

	\begin{lemma}\label{lem:3letters}
		If $f \in \set{\bull, \diam}^m$ contains at most two cyclically consecutive $\bull$'s, then every cyclic shift of $f$ is curtained. Consequently, $f$ is not the pane frame or window frame of a nontrivial upcycle.
	\end{lemma}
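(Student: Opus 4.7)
The plan is to prove the first sentence by a short case analysis on the first and last characters of an arbitrary cyclic shift $f'$ of $f$, and then to derive the second sentence directly from Corollary~\ref{cor:windowpanedrapery}. Before starting, I would fix the interpretation of the hypothesis: ``contains at most two cyclically consecutive $\bull$'s'' means that when $f$ is read as a cyclic word there is no run of three or more $\bull$ symbols in a row, and this property is clearly invariant under cyclic shifts. I would also note that the degenerate cases $m=1$ with $f=\bull$ and $m=2$ with $f=\bull\bull$ are automatically excluded, since in those cases the cyclic word consists of unboundedly many consecutive $\bull$'s.

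Given a cyclic shift $f' = f'_1 \cdots f'_m$, I would split into two cases. If $f'_1 = \diam$ or $f'_m = \diam$, then $k=1$ already shows $f'$ is $1$-curtained. Otherwise $f'_1 = f'_m = \bull$, and cyclically the characters at positions $m$ and $1$ are already two consecutive $\bull$'s; the hypothesis then forces $f'_{m-1} = \diam$ and $f'_2 = \diam$, since either equality failing would extend this to a cyclic run of three $\bull$'s. Taking $k = 2$ now works: at $i=1$ we have $f'_{m-1} = \diam$, and at $i=2$ we have $f'_2 = \diam$, so the curtained condition is satisfied for both indices. This completes the first sentence.

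For the second sentence, I would argue by contradiction: if $f$ were the window frame or pane frame of some nontrivial upcycle, then Corollary~\ref{cor:windowpanedrapery} (resp.\ Theorem~\ref{thm:windowframedrapery} in the window-frame case) would yield a cyclic shift of $f$ that is not curtained, contradicting the first sentence just established.

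There is no real obstacle here; the argument is essentially a two-line case split. The only care needed is in the $f'_1 = f'_m = \bull$ case, where one must invoke the cyclic (rather than linear) interpretation of the hypothesis to conclude that both $f'_{m-1}$ and $f'_2$ are diamonds.
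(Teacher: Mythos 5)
Your proof is correct and follows essentially the same route as the paper's: a case split on whether the shift begins or ends with $\diam$ (giving a $1$-curtain) versus beginning and ending with $\bull$, in which case the cyclic hypothesis forces the pattern $\bull\diam\cdots\diam\bull$ and a $2$-curtain, with the second sentence then following from Theorem~\ref{thm:windowframedrapery} and Corollary~\ref{cor:windowpanedrapery}. Your explicit handling of the degenerate all-$\bull$ cases and the verification of the $k=2$ condition are just slightly more detailed versions of what the paper leaves implicit.
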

	
	\begin{proof}
		If $f$ starts or ends with a $\diam$, then it is $1$-curtained. If $f$ starts with $\bull\diam$ and ends with $\diam\bull$, then it is $2$-curtained. If $f$ has at most two consecutive $\bull$'s, then every cyclic shift of $f$ falls into one of the above two cases and hence is $1$- or $2$-curtained. By Corollary~\ref{cor:windowpanedrapery} and Theorem \ref{thm:windowframedrapery}, $f$ is not the pane frame or the window frame of an upcycle.
	\end{proof}
	
	\begin{prop}\label{prop:drapebounds}
		$D(1) = D(2) = D(3) = 1$, and $D(4) = D(5) = 2$.
	\end{prop}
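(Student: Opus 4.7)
My approach is a direct case analysis using the definition of $k$-curtained. Throughout, I will repeatedly use two easy observations: first, that any word starting or ending with $\diam$ is automatically $1$-curtained; second, that having $D(n)\geq d$ requires exhibiting a single word with $d-1$ diamonds which is not curtained, while $D(n)\leq d$ requires checking that every word with exactly $d$ diamonds is curtained (since adding more diamonds can only make curtaining easier).

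For the three claims $D(1)=D(2)=D(3)=1$, I would verify that every $f \in \{\bull,\diam\}^n$ with at least one diamond is curtained. For $n=1,2$, every such word begins or ends with $\diam$, so is $1$-curtained. For $n=3$, if the word is not $1$-curtained, then the diamond must be in the middle position, i.e.\ $f=\bull\diam\bull$; then one checks that $f$ is $2$-curtained because $f_2=\diam$ handles both $i=1$ and $i=2$.

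For $D(4)=2$, I would first witness non-curtaining of $\bull\bull\diam\bull$: the value $k=1$ fails since $f_1=f_4=\bull$; the values $k=2,3,4$ each fail because they force a check against some position in $\{1,2,4\}$, all of which hold $\bull$. This proves $D(4)\geq 2$. For the upper bound, enumerate the six words of length $4$ with exactly two diamonds: five of them begin or end with $\diam$ (hence are $1$-curtained), and the remaining word $\bull\diam\diam\bull$ is $2$-curtained via $f_2=f_3=\diam$.

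For $D(5)=2$, I would witness non-curtaining of $\bull\bull\diam\bull\bull$ by an analogous check over $k\in[5]$, where each $k$ fails because the paired positions consist entirely of $\bull$'s. For the upper bound, I would again cull the words with a diamond in position $1$ or $5$ (trivially $1$-curtained), leaving only words with both diamonds in positions $\{2,3,4\}$: namely $\bull\diam\diam\bull\bull$, $\bull\diam\bull\diam\bull$, and $\bull\bull\diam\diam\bull$. For each, $k=2$ works: in every case $f_2$ or $f_4$ is $\diam$ (or both), which handles $i=2$, while the $\diam$ at $f_3$ or an endpoint-adjacent diamond handles $i=1$. The entire proof is routine finite case analysis, so there is no real obstacle — the only care needed is to be systematic about the enumeration so that no word is overlooked.
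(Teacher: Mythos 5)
Your overall strategy is the same as the paper's: a direct finite case analysis, with uncurtained witnesses for the lower bounds (your $\bull\bull\diam\bull$ and $\bull\bull\diam\bull\bull$ are indeed not curtained, just as the paper's $\bull\diam\bull\bull$ and $\bull\diam\bull\bull\bull$ are), and your monotonicity remark --- that adding diamonds can only help curtaining, so it suffices to check words with exactly $d$ diamonds --- is correct and a reasonable way to organize the check. The cases $n\le 4$ are all verified correctly.

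The verification of $D(5)\le 2$, however, contains a concrete error. For $f$ of length $5$, being $2$-curtained means that each of the pairs $(f_1,f_4)$ and $(f_2,f_5)$ contains a $\diam$: the partner of $f_i$ is $f_{n-k+i}$, so for $i=2$ it is $f_5$, not $f_4$, and $f_3$ plays no role at all when $k=2$. Hence $k=2$ succeeds only for $\bull\diam\bull\diam\bull$; it fails for $\bull\diam\diam\bull\bull$ (where $f_1=f_4=\bull$) and for $\bull\bull\diam\diam\bull$ (where $f_2=f_5=\bull$). Those two words are still curtained --- take $k=3$, whose pairs are $(f_1,f_3)$, $(f_2,f_4)$, $(f_3,f_5)$, all covered by the diamond at $f_3$ together with the one at $f_2$ or $f_4$ --- which is exactly how the paper handles them. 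So the proposition and your enumeration are fine; only the final check for these two words needs to be repaired by switching from $k=2$ to $k=3$.
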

	
	\begin{proof}
		Let $m \in [3]$. If $f \in \set{\bull,\diam}^m$ has a diamond, then it has at most $2$ cyclically consecutive $\bull$'s. By Lemma~\ref{lem:3letters}, $f$ is curtained, and $D(m) = 1$.
		
		If $f \in \set{\bull, \diam}^4$ has $2$ diamonds, then it has at most $2$ cyclically consecutive $\bull$'s. Again, $f$ is curtained. Furthermore, $\bull \diam \bull \bull$ is not curtained, demonstrating that $D(4) = 2$.
		
		Suppose $f \in \set{\bull, \diam}^5$ has $2$ diamonds.  
		If $f$ starts or ends with a $\diam$, then it is $1$-curtained. If $f$ starts with $\bull\diam$ and ends with $\diam\bull$, then it is $2$-curtained. The remaining two cases are that $f$ is either $\bull \bull \diam \diam \bull$ or $\bull \diam \diam \bull \bull$, so $f$ is $3$-curtained. Thus $D(5) \leq 2$. Moreover, $\bull \diam \bull \bull \bull$ is not curtained, demonstrating that $D(5) = 2$.
	\end{proof}
	
	While we are able to determine $D(n)$ for small values of $n$ in Proposition \ref{prop:drapebounds}, we are unfortunately not able to do so in general. Rather, we find $D(n)$ computationally via an exhaustive search for $n\leq 39$ (see Table \ref{fig:D(n)Table}).
	
	\begin{remark}\label{asymptoticbound}
		The best asymptotic bound on $d$ at the time of writing is $d< n- \sqrt{n-\frac{7}{4}}-\frac{1}{2}$ by Proposition 4.7 in \cite{G18}.
	\end{remark}
	
	In \cite{G18}, it is shown that there are no nontrivial upcycles for $(a,n,d)$ if $\gcd(a^{n-d},n)\leq2$. In particular, this implies that there are no nontrivial upcycles with frame period 2. By a simple application of the previous results of this section, we are able to prove an extension of this result in Corollary \ref{cor:frame3} below. Additionally, we prove a result concerning the diamondicity of upcycles with frame period 4 or 5 in Corollary \ref{cor:4or5}.
	
	\begin{table}[h]
		\begin{center}
			\begin{tabular}{|c|c|} 
				\hline
				$\phantom{\Big|}n\phantom{\Big|}$ & $D(n)$ \\
				\hline
				1 & 1 \\
				2 & 1 \\
				3 & 1 \\
				4 & 2 \\
				5 & 2 \\
				6 & 3 \\
				7 & 4 \\
				8 & 4 \\
				9 & 5 \\
				10 & 6 \\
				11 & 6 \\
				12 & 7 \\
				13 & 8 \\
				\hline
			\end{tabular}\hspace{1cm}
			\begin{tabular}{|c|c|}
				\hline
				$\phantom{\Big|}n\phantom{\Big|}$ & $D(n)$ \\
				\hline
				14 & 9 \\
				15 & 9 \\  
				16 & 10 \\
				17 & 11 \\
				18 & 12 \\
				19 & 12 \\
				20 & 13 \\
				21 & 14 \\
				22 & 15 \\
				23 & 16 \\
				24 & 17 \\
				25 & 17 \\
				26 & 18 \\
				\hline
			\end{tabular}\hspace{1cm}
			\begin{tabular}{|c|c|}
				\hline
				$\phantom{\Big|}n\phantom{\Big|}$ & $D(n)$ \\
				\hline
				27 & 19 \\
				28 & 20 \\
				29 & 21 \\
				30 & 22 \\
				31 & 22 \\
				32 & 23 \\
				33 & 24 \\
				34 & 25 \\
				35 & 26 \\
				36 & 27 \\
				37 & 28 \\
				38 & 28 \\
				39 & 29 \\
				\hline
			\end{tabular}
		\end{center}
		\caption{Table of values for $D(n)$ for $n \le 39.$ The values for $n \le 5$ are shown in Proposition~\ref{prop:drapebounds}. The values for $n > 5$ were obtained via an exhaustive computer search. For any upcycle with parameters $(a,n,d)$, we must have $d < D(n)$ by Lemma~\ref{lem:diamdrapepane}.}
		\label{fig:D(n)Table}
	\end{table}
	
	\begin{corollary}\label{cor:frame3}
		Every nontrivial upcycle has frame period at least $4$. Consequently, there do not exist nontrivial upcycles for $(a,n,d)$ if $\gcd(a^{n-d},n)\leq3$.
	\end{corollary}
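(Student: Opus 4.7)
The plan is to show that every nontrivial upcycle has frame period $m \geq 4$, and then deduce the nonexistence consequence immediately from Theorem \ref{thm:olddiv}, which forces $m \mid \gcd(a^{n-d},n)$; in particular $\gcd(a^{n-d},n) \geq m \geq 4$, so any $(a,n,d)$ with $\gcd(a^{n-d},n) \leq 3$ admits no nontrivial upcycle.

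To establish $m \geq 4$, I would rule out $m \in \{1,2,3\}$ by a short case analysis. If $m = 1$, the pane frame is either $\bull$ or $\diam$, forcing the upcycle to be, respectively, a De Bruijn cycle or the degenerate $(\diam)$, both of which are trivial. For $m \in \{2,3\}$, the pane frame $p$ has length $m$ and, by definition of pane frame, cannot itself be a power of a shorter word; in particular $p$ is non-constant, so it contains at least one $\bull$ and at least one $\diam$. Because $|p| \leq 3$ and $p$ contains at least one $\diam$, the longest run of cyclically consecutive $\bull$'s in $p$ has length at most $m - 1 \leq 2$. Lemma \ref{lem:3letters} then guarantees that every cyclic shift of $p$ is curtained, which contradicts Corollary \ref{cor:windowpanedrapery}, asserting that some cyclic shift of the pane frame of a nontrivial upcycle must \emph{not} be curtained. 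Hence $m \geq 4$.

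Most of the real work has already been done by Theorem \ref{thm:windowframedrapery}, Lemma \ref{lem:3letters}, and Corollary \ref{cor:windowpanedrapery}, so the only remaining task is the small bookkeeping check on pane frames of length at most three. The main subtlety to keep in mind is that a genuine pane frame of length $m$ cannot itself be periodic with a smaller period; this is what rules out constant length-$2$ or length-$3$ candidates from masquerading as legitimate pane frames of period $2$ or $3$, and it is what lets us conclude that any short pane frame has a $\diam$, which in turn bounds the run length of $\bull$'s and triggers Lemma \ref{lem:3letters}.
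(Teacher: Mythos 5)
Your proof is correct and follows essentially the same route as the paper: the paper packages your case analysis as the statement $D(m)=1$ for $m\le 3$ (Proposition \ref{prop:drapebounds}, itself proved via Lemma \ref{lem:3letters}) and then invokes Lemma \ref{lem:diamdrapepane}, whereas you inline that computation and appeal to Lemma \ref{lem:3letters} and Corollary \ref{cor:windowpanedrapery} directly. The underlying mechanism --- a short pane frame with a diamond has at most two cyclically consecutive $\bull$'s, hence every cyclic shift is curtained, contradicting Theorem \ref{thm:windowframedrapery} --- is identical.
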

	
	\begin{proof}Suppose an upcycle $w$ with parameters $(a,n,d)$ has $\gcd(a^{n-d},n) \le 3$. By Theorem \ref{thm:olddiv}, the frame period $m$ of $w$ divides $\gcd(a^{n-d},n)$, so $m \le 3$. Proposition~\ref{prop:drapebounds} shows $D(m) = 1$, so by Lemma~\ref{lem:diamdrapepane}, $w$ has diamondicity $0$, and $w$ is a trivial upcycle.
	\end{proof}

	\begin{corollary}\label{cor:4or5}
		Let $m \in \set{4,5}$. Any nontrivial upcycle for $\A^n$ with frame period $m$ has diamondicity $\frac{n}{m}$. Every nontrivial upcycle for $\A^m$ has diamondicity $1$.
	\end{corollary}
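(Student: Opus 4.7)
The plan is to apply Lemma~\ref{lem:diamdrapepane} with the pane frame playing the role of $p$, and to get a matching lower bound from the nontriviality assumption. Suppose $u$ is a nontrivial upcycle for $\A^n$ with frame period $m\in\{4,5\}$. By Theorem~\ref{thm:olddiv}, $m\mid n$, so I write $s := n/m$ and express the window frame as $p^s$, where $p\in\{\bull,\diam\}^m$ is the pane frame. Since Proposition~\ref{prop:drapebounds} gives $D(4)=D(5)=2$, Lemma~\ref{lem:diamdrapepane} yields
\[
d \;\leq\; (D(m)-1)\,s \;=\; s \;=\; \tfrac{n}{m}.
\]

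For the matching lower bound I argue that the pane $p$ must contain at least one diamond. Because $u$ is nontrivial, its frame contains at least one $\diam$; since the frame is $p^s$, the diamonds of the frame are exactly $s$ copies of the diamonds of $p$, so $p$ has at least one diamond. Thus the window frame has at least $s=n/m$ diamonds, giving $d\geq n/m$. Combining the two inequalities gives $d=n/m$, which is the first claim.

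For the second claim, let $u$ be a nontrivial upcycle for $\A^m$ with $m\in\{4,5\}$, and let $m'$ denote its frame period. Then $m'\mid m$, and by Corollary~\ref{cor:frame3}, $m'\geq 4$. The divisors of $4$ are $\{1,2,4\}$ and of $5$ are $\{1,5\}$, so in either case the only divisor of $m$ which is at least $4$ is $m$ itself; hence $m'=m$. Applying the first claim (with $n=m$) gives $d=m/m=1$, completing the proof.

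The only subtlety worth flagging is the lower bound step: one must remember that the diamondicity is counted within a single window (not across the whole cycle), and use the periodicity $f=p^s$ to convert ``$u$ has a diamond somewhere'' into ``each pane contains a diamond.'' Once that is in hand, the remaining work is just reading off $D(4)=D(5)=2$ from Proposition~\ref{prop:drapebounds} and reducing the second claim to Corollary~\ref{cor:frame3} via the divisor argument.
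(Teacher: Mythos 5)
Your proposal is correct and follows essentially the same route as the paper: the upper bound $d\le (D(m)-1)\frac{n}{m}=\frac{n}{m}$ via Lemma~\ref{lem:diamdrapepane} and Proposition~\ref{prop:drapebounds}, the matching lower bound from nontriviality forcing at least one diamond per pane frame, and the reduction of the second claim to the first via Theorem~\ref{thm:olddiv} and Corollary~\ref{cor:frame3}. Your version is merely more explicit than the paper's (which compresses both bounds into the phrase ``its pane frame contains exactly one diamond''), and the subtlety you flag about converting one diamond somewhere into one diamond per pane is exactly the right point to make precise.
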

	
	\begin{proof}
		Let $m \in \set{4,5}$. By Lemma~\ref{lem:diamdrapepane}, any nontrivial upcycle $w$ for $\A^n$ with frame period $m$ has diamondicity at most $(D(m) - 1)\frac{n}{m}$. By Proposition~\ref{prop:drapebounds}, $D(m) = 2$. Thus, its pane frame contains exactly one diamond, and $w$ has diamondicity $k$. Furthermore, by Theorem~\ref{thm:olddiv} and Corollary~\ref{cor:frame3}, any upcycle for $\A^m$ has frame period $m$.
	\end{proof}
	
	To conclude this section, we present a table that shows for $4\leq n\leq 12$, the possible values of $a$ and $d$ for which the results of this section do not rule out the possibility of the existence of an upcycle with parameters $(a,n,d)$ (see Table \ref{fig:ExistsTable}). This table serves to summarize what is known and to facilitate potential future research toward computational searches for upcycles with small parameters or more nonexistence results.
	
	Many of the restrictions on the values of $d$ in Table~\ref{fig:ExistsTable} come from Lemma~\ref{lem:diamdrapepane}. We also use Theorem~\ref{thm:olddiv} as well as Corollary~\ref{cor:frame3} to provide restrictions on $a$ for given $n$. For example, by Corollary~\ref{cor:frame3} there is no nontrivial upcycle with $n=6$ and $a=3$ since by Theorem~\ref{thm:olddiv} such an upcycle must have frame period at most 3. Corollary~\ref{cor:4or5} is also employed for example when $n=10$ and $a= 5k$ $(2 \nmid k)$, since such an upcycle must have frame period 5 by Theorem~\ref{thm:olddiv}, and a pane frame of this upcycle must have one diamond by Corollary~\ref{cor:4or5}.

	\begin{table}[h]
		\begin{center}
			\begin{tabular}{|c|c|c|}
				\hline
				${\mathbf{n}}$ & ${\mathbf{a}}$ & ${\mathbf{d}}$ \\
				\hhline{|=|=|=|}
				$4$ & $2k$ & $1$  \\
				\hline
				$5$ & $5k$ & $1$ \\
				\hline
				$6$ & $\begin{array}{c}
					6k \\
				\end{array}$ & $\begin{array}{c}
					1 \le d \le 2 \\
				\end{array}$  \\
				\hline
				$7$ & $7k$ & $1 \le d \le 3$  \\
				\hline
				$8$ & $2k$ & $1 \le d \le 3$  \\
				\hline
				$9$ & $3k$ & $1 \le d \le 4$  \\
				\hline
				$10$ & $\begin{array}{c}
					10k \\
					5k\ (2 \nmid k)
				\end{array}$ & $\begin{array}{c}
					1 \le d \le 5 \\
					2
				\end{array}$  \\
				\hline
				$11$ & $11k$ & $1 \le d \le 5$ \\
				\hline
				$12$ & $\begin{array}{c}
					12k \\
					2k\ (3 \nmid k) \\
				\end{array}$ & $\begin{array}{c}
					1 \le d \le 6 \\
					3 \\
				\end{array}$  \\
				\hline
			\end{tabular}
		\end{center}
		\caption{A table for $4\leq n\leq 12$ of the possible values of $a$ and $d$ for which the existence of an upcycle with parameters $(a,n,d)$ is not ruled out by 
			currently known results. Of these, only the cases where $n=4$, $a=2k$, $d=1$ and where $n=8$, $a=2k$, $d=1$ are known to exist.}\label{fig:ExistsTable}
	\end{table}

	\section{Open Questions}\label{sec:open}
	
	There is currently a large gap between the upcycles that are known to exist and the restrictions that show which upcycles cannot exist. Perhaps the most immediate open questions lie in shrinking this gap, either by constructing new upcycles or by providing new restrictions on their existence.
	
	On the question of existence, we may simply ask whether there exist upcycles for $n \ne 4,8.$ Given that all currently known upcycles can be constructed starting from binary upcycles with $d=1$, it may be profitable to focus on these upcycles in particular. Since $n \mid da^{n-d}$ for any upcycle, we know that all binary upcycles with diamondicity $d=1$ must have word length $n = 2^k$ for some $k \ge 2$. A natural question is whether binary upcycles with diamondicity $d=1$ exist for every greater power of $2$.
	
	\begin{conjecture}
		There exist upcycles for  $(a,n,d)=(2,2^k,1)$ for all $k\ge2$.
	\end{conjecture}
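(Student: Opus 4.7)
The plan is to reduce the conjecture to a question about De Bruijn cycles and then attempt both a direct inductive construction and a probabilistic fallback. By Corollary \ref{cor:lift_to_DB}, the conjecture is equivalent to the following: for every $k \ge 2$, there is a De Bruijn cycle $w$ for $\{0,1\}^{2^k}$ together with a residue class $p \pmod{2^k}$ such that replacing every position of $w$ congruent to $p$ by $\diam$ yields a cyclic partial word whose consecutive windows cover pairwise disjoint pairs of length-$2^k$ words.

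For the inductive construction, a suggestive coincidence is that the binary length of a $(2, 2^{k+1}, 1)$-upcycle equals the total bit-length obtained by applying the Alphabet Multiplier (Theorem \ref{thm: alphamult}) with multiplier $2$ to a $(2, 2^k, 1)$-upcycle $u_k$ and writing each letter of $\{0,1,2,3\}$ in two bits: both equal $2^{2^{k+1}-1}$. Approach (a) is therefore to take the resulting $(4, 2^k, 1)$-upcycle and convert it to a binary cyclic partial word in a controlled way that yields the correct $2^{k+1}$-periodic diamond frame. A naïve bit-pair expansion of $\diam$ produces twice as many diamonds as needed, so one would need a more subtle encoding, perhaps replacing each $\diam$ by $\diam\ell$ (or $\ell\diam$) with the letter $\ell$ chosen from a suitable perfect necklace (using the constructions of Section \ref{sec:constructions}) so that the result remains an upcycle. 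Approach (b) is to develop a partial-word analogue of a known doubling construction for De Bruijn sequences, such as an adaptation of the Lempel $D$-morphism, that directly produces a $(2, 2n, 1)$-upcycle from a $(2, n, 1)$-upcycle; Theorem \ref{thm:necklacelift} and the astute-graph machinery of Section \ref{sec:astute_graphs_etc} provide the tools for verifying the upcycle condition.

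The main obstacle is that every construction in this paper modifies either the alphabet size $a$ or the diamondicity $d$, but none is known to increase the word length $n$ while preserving $d = 1$. A genuinely new idea is required to produce upcycles with larger $n$. Should no direct construction be found, a natural fallback is a probabilistic argument: using the characterization of lifts in Theorem \ref{thm:necklacelift}, one can try to show that a uniformly random De Bruijn cycle of order $2^k$ admits a valid fold with positive probability, perhaps via a first- or second-moment estimate combined with the frame constraints of Section \ref{sec:nonexist}. Carey's empirical observation \cite{Carey} of hundreds of thousands of distinct $(2, 8, 1)$-upcycles up to rotation lends some support to the hope that such an argument is feasible.
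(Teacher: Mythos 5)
The statement you are addressing is a \emph{conjecture} in the paper's open-questions section: the authors offer no proof, and the problem is open. Your submission is likewise not a proof but a survey of possible attack routes, and you concede this yourself (``a genuinely new idea is required''). So the gap here is total: no step of any of your three approaches is carried out, and none of them, as sketched, is close to working. The reformulation via Corollary~\ref{cor:lift_to_DB} is a correct but contentless restatement. Approach (a) founders on more than the diamond-count mismatch you acknowledge: writing each letter of $\{0,1,2,3\}$ in two bits destroys the covering property at odd window offsets, since a length-$2^{k+1}$ binary window starting at an odd position straddles letter boundaries and does not correspond to any length-$2^k$ window of the quaternary upcycle; this is the standard obstruction to converting De Bruijn-type objects between alphabet sizes by digit expansion, and the numerical coincidence of lengths does nothing to overcome it. Approach (b) names a construction (a Lempel-style doubling for partial words) without defining it or verifying anything.

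The probabilistic fallback is also not an argument. Theorem~\ref{thm:necklacelift} characterizes the lifts of a \emph{given} upcycle; it gives no handle on the reverse question of whether a uniformly random De Bruijn cycle admits a fold, and the events ``window $i$ and window $j$ cover disjoint pairs after inserting diamonds'' are heavily dependent across $i,j$, so neither a first- nor a second-moment computation is routine --- and you attempt neither. Citing Carey's computational census of $(2,8,1)$-upcycles is evidence that the conjecture is plausible, not a step toward proving it. In short: you have correctly identified why the existing machinery of the paper (alphabet multipliers, lifts, folds) cannot resolve the conjecture --- every construction preserves $n$ --- but you have not supplied the missing idea, and the conjecture remains open.
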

	
	To address this question it may be helpful to understand the binary upcycles for $n=8$ with $d=1$ better. Is there a method for constructing them? How many exist, up to symmetries?
	
	Though all known upcycles have $d=1$, the current best bound for $d$ in terms of $n$ is asymptotically equivalent to $n$ (see Remark~\ref{asymptoticbound}).
	
	\begin{question}\label{question:d>1}
		Do there exist upcycles with $d > 1$?
	\end{question}
	
	If $d=1$, then $n \mid a^{n-d}$. Thus if the answer to Question~\ref{question:d>1} is affirmative, we might ask whether there exist upcycles with $n \nmid a^{n-d}.$ Furthermore, in Theorems~\ref{thm: alphamult} (Alphabet Multiplier) and \ref{thm:necklacelift} (Characterization of lifts), the situation is more complicated if we do not assume $n \mid a^{n-d}$. The general proofs of these results rely heavily on perfect necklaces, so we ask the following.
	
	\begin{question}\label{question:perfneck}
		Can we find novel constructions of perfect necklaces? Are there efficient ways of generating them?
	\end{question}
	
	The best bounds on diamondicity for small values of $n$ rely on the function $D(n)$. Studying this function further may give better bounds on diamondicity, and we also believe that the definition of $D(n)$ is general enough to make the following question independently interesting.
	
	\begin{question}
		Is there a closed formula for $D(n)$?
	\end{question}
	
	Lastly we return to De Bruijn cycles. Corollary~\ref{cor:lift_to_DB} shows that every upcycle lifts to a De Bruijn cycle, so finding new upcycles is equivalent to finding these particular De Bruijn cycles. However, Corollary~\ref{cor:noLFSRs} shows that LFSRs (which are a particularly well-studied method of constructing De Bruijn cycles) cannot be used to create De Bruijn cycles that fold to any known upcycle.
	
	\begin{question}\label{question:foldableDB}
		Are there efficient ways of finding De Bruijn cycles that fold to upcycles?
	\end{question}

	\section*{Acknowledgments}
	
	The research of Dylan Fillmore, Rachel Kirsch, Kirin Martin, and Daniel McGinnis was partially supported by NSF DMS 1839918. Bennet Goeckner was supported by a grant from the Simons Foundation (Grant Number 814268, MSRI). Rachel Kirsch was supported in part by Simons Foundation Grant MP-TSM-00002688. Bennet Goeckner and Rachel Kirsch were also supported by AMS-Simons travel grants. Daniel McGinnis was supported by National Science Foundation (NSF) under award no. 2402145. We thank Bernard Lidick\'{y}, Anton Lukyanenko, Clare Sibley, and Elizabeth Sprangel for helpful discussions.

	\bibliographystyle{plainurl}
	\bibliography{UpcyclesBibliography}
	
	
	\vspace{.15in}
	
	{\sc Dylan Fillmore} Department of Mathematics, Iowa State University
	
	\vspace{.5em}
	
	{\sc Bennet Goeckner} Department of Mathematics, University of San Diego
	
	\vspace{.5em}
	
	{\sc Rachel Kirsch} Department of Mathematical Sciences, George Mason University
	
	\vspace{.5em}
	
	{\sc Kirin Martin} Department of Mathematics, Iowa State University
	
	\vspace{.5em}
	
	{\sc Daniel McGinnis} Department of Mathematics, Princeton University
\end{document}